\title[Large solutions for subordinate spectral]{Large solutions for subordinate spectral Laplacian}
\author{Ivan Bio\v{c}i\'{c} and Vanja Wagner}
\date{}
\DeclareMathOperator\supp{supp}
\newtheorem{thm}{Theorem}[section]
\newtheorem{cor}[thm]{Corollary}
\newtheorem{lem}[thm]{Lemma}
\theoremstyle{definition}
\newtheorem{rem}[thm]{Remark}
\newtheorem*{assumption*}{\assumptionnumber}
\providecommand{\assumptionnumber}{}
\newenvironment{assumption}[2]
{%
	\renewcommand{\assumptionnumber}{(\textbf{#1#2})}%
	\begin{assumption*}%
		\protected@edef\@currentlabel{(\textbf{#1#2})}%
	}
	{%
	\end{assumption*}
}
\newcommand{\R}{\mathbb{R}}
\newcommand{\N}{\mathbb{N}}
\newcommand{\p}{\mathbb{P}}
\newcommand{\ex}{\mathbb{E}}
\newcommand{\subsub}{\subset\subset}
\newcommand{\1}{\mathbf 1}
\newcommand{\LLL}{L^1(D,\de(x)dx)}
\newcommand{\BB}{\mathcal{B}}
\newcommand{\KK}{\mathcal{K}}
\newcommand{\DD}{\mathcal{D}}
\newcommand{\Lo}{\phi(-\left.\Delta\right\vert_{D})}
\newcommand{\Loz}{\phi^*(-\left.\Delta\right\vert_{D})}
\newcommand{\wLo}{\phi_d(-\left.\Delta\right\vert_{D})}
\newcommand{\LoR}{\phi(-\Delta)}
\newcommand{\wLoR}{\phi_d(-\Delta)}
\newcommand{\LozR}{\phi^*(-\Delta)}
\newcommand{\GDfi}{G^\phi_{ D}}
\newcommand{\GDFI}{\GDfi}
\newcommand{\GDfz}{G^{\phi^*}_{ D}}
\newcommand{\GD}{G_{ D}}
\newcommand{\PDfi}{P^\phi_{ D}}
\newcommand{\PDFI}{\PDfi}
\newcommand{\de}{\delta_D}
\newcommand{\wh}{\widehat}
\newcommand{\wt}{\widetilde}
\newcommand{\diam}{\textrm{diam}}
\newcommand{\dist}{\textrm{dist}}
\newcommand{\uu}{\textswab{u}}
\newcommand{\vv}{\textswab{v}}
\numberwithin{equation}{section}
\begin{document}

\begin{abstract}
    We find a large solution to a semilinear Dirichlet problem in a bounded $C^{1,1}$ domain for a non-local operator $\phi(-\Delta\vert_{D})$, an extension of the infinitesimal generator of a subordinate killed Brownian motion. The setting covers and extends the case of the spectral fractional Laplacian. The upper bound for the explosion rate of the large solution is obtained and is given in terms of the renewal function, distance to the boundary, and the Keller-Osserman-type transformation of the nonlinearity. 
    Additionally, we prove interior higher regularity results for this operator.
\end{abstract}
\maketitle
\bigskip\noindent
{\bf AMS 2020 Mathematics Subject Classification}: Primary 35A01, 35J61, 45K05; 
Secondary 35S15, 60J35, 35B40, 35R11, 31C05.

\bigskip\noindent
{\bf Keywords and phrases}: large solutions, semilinear differential equations, non-local operators, subordinate killed Brownian motion

\section{Introduction}

Let $D\subset \R^d$, $d\ge3$, be a $C^{1,1}$ bounded domain and $f:\R\to [0,\infty)$. In this article, we find a solution to the following semilinear problem
\begin{equation}\label{eq:problem_intro}
	\begin{array}{rcll}
		\Lo u&=& - f(u)& \quad \text{in } D,\\
		\frac{u}{\PDFI\sigma}&=&\infty&\quad \text{on }\partial D.
	\end{array}
\end{equation}
Here the operator $\Lo$ is an extension of the infinitesimal generator of a subordinate killed Brownian motion, where the subordinator has the Laplace exponent $\phi$, and $\PDFI\sigma$ is a reference function defined as the Poisson potential of the $d-1$ dimensional Hausdorff measure $\sigma$  on $\partial D$ for the subordinate killed  Brownian motion. The solution to \eqref{eq:problem_intro} is called a large solution since it cannot be uniformly bounded by a nonnegative $\Lo$-harmonic function.

The operator $\Lo$ is a non-local operator of the elliptic type, and for a suitably regular function $u$ can be written as the principal value integral
\begin{align*}
    \Lo u(x)=\textrm{P.V.}\int_D\big(u(x)-u(y)\big)J_D(x,y)dy+\kappa(x)u(x), \quad x \in D,
\end{align*}
where the singular jumping kernel $J_D$ and the so-called killing function $\kappa$ are completely determined by $\phi$, see \eqref{e:J_D}. Also, $\Lo$ can be written in the spectral form 
\begin{align*}
    \Lo u=\sum_{j=1}^\infty \phi(\lambda_j)\wh u_j\varphi_j,
\end{align*}
where $(\varphi_j,\lambda_j)_j$ are the eigenpairs of  $-\left.\Delta\right\vert_{D}$ -- the Dirichlet Laplacian in $D$. 

A classical example of an operator of this form is the spectral fractional Laplacian $(-\left.\Delta\right\vert_{D})^{\alpha/2}$, where $\phi(\lambda)=\lambda^{\alpha/2}$, for $\alpha\in(0,2)$, i.e.~the subordinator is the $\alpha$-stable subordinator.

In this article we make the first step towards the general theory of non-local operators, moving beyond the spectral fractional Laplacian, by considering functions $\phi$ which are complete Bernstein functions satisfying the weak scaling condition at infinity: There exist $a_1,a_2>0$ and $\delta_1, \delta_2  \in(0,1)$ satisfying
\begin{align*}\label{WSC_intro}
a_1\lambda^{\delta_1}\phi(t)\le \phi(\lambda t)\le a_2\lambda^{\delta_2}\phi(t),\quad t\ge1, \,\lambda\ge 1.\tag{WSC}
\end{align*}
This assumption covers a large class of subordinators and extends the classical stable, tempered stable, regularly varying classes (for a list of half a dozen examples we refer to \cite[p. 41]{vondra_heat}), and it is a standard assumption in many state-of-the-art results covering a wide range of general non-local theory, e.g. \cite{semilinear_bvw,BL-na21,BMS23,CKKW22,CKSV22,GKK20,KSV-jde23} and the references therein.

The large problem \eqref{eq:problem_intro} is solved both in the distributional and the pointwise sense, where the boundary condition $\frac{u}{\PDFI\sigma}=\infty$ stands for the limit at the boundary of $u/\PDFI\sigma$ both in the pointwise as well as in the weak $L^1$ sense, described in \eqref{eq:boundary Dirichlet limit} and \eqref{eq:distri solution boundary}. It is a characteristic of non-local settings that boundary blow-up of solutions is possible even for linear equations. For example, the $\Lo$-harmonic in $D$ reference function $\PDFI\sigma$ explodes at the boundary $\partial D$. Therefore, in the non-local setting, the interpretation of a large solution is that it exhibits a greater blow-up at the boundary than any harmonic function for $\Lo$. In fractional settings, the blow-up rates are written as the fractional powers of the distance to the boundary. In this article, the corresponding blow-up rate can be expressed in terms of the Laplace exponent $\phi$ and the corresponding renewal function.

The analysis of non-local operators and semilinear equations has attracted much attention in recent years, particularly in the case of the fractional Laplacian. Research has predominantly been focused on moderate solutions to semilinear equations for the fractional Laplacian, which are solutions bounded by some nonnegative harmonic function, see \cite{Aba15a,BC17,BCBF16, BCBF18,bogdan_et_al_19,CFQ,FQ12, FQT12}. However, there is also great interest in studying large solutions to semilinear equations, i.e.~the solutions that explode at the boundary with a higher rate than harmonic functions. In the case of fractional Laplacian, such solutions were explored in \cite{Aba17, BCBF16, CFQ}. In recent years, there has been further development in the theory of linear and semilinear problems for more general non-local operators{;} here we refer the reader to \cite{bogdan_extension,BL-na21,BMS23,FJ23,remarks_on_nonlocal,KKLL} for results on linear equations, and to \cite{semilinear_bvw,BJ20,CGCV,HuynhNguyen2022_new,klimsiak2023dirichlet} for moderate solutions to semilinear problems. To the best of our knowledge, the article \cite{semilinear_cvw} is the first to deal with large solutions for non-local operators that generalise the fractional Laplacian.

On the other hand, an interesting modification of the standard semilinear problem involving the fractional Laplacian is to consider other types of stable operators - such as the spectral fractional Laplacian and other regional or spectral-type non-local operators. Here we mention \cite{dhifli2012,grubb2016,SV2014} where homogeneous equations were studied in the case of the spectral fractional Laplacian. Furthermore, we highlight \cite{AbaDupaNonhomo2017}, where the authors introduced the nonhomogeneous boundary condition for the case of the spectral fractional Laplacian and where both moderate and large solutions to semilinear equations were studied.

Motivated by \cite{AbaDupaNonhomo2017}, the case of moderate solutions to nonhomogeneous equations for more general non-local operators of the spectral type was studied in \cite{Bio23}, which serves as a technical cornerstone for this article. The setting here is equivalent to that in \cite{Bio23}, with the only difference that the results in the present work do not cover the case $d=2$. This is due to some constraints in potential-theoretical techniques applied in the paper, which we further comment on below.

Let us now describe the main results of the article. In Section \ref{s:large} we present the central result - the existence of a large solution to \eqref{eq:problem_intro}, Theorem \ref{t:large solution}. For the nonlinearity $f$ we assume that $f\in C^1(\R^d)$, $f(0)=0$, $f(t)>0$ for $t>0$, and
\begin{assumption}{F}{}\label{F_intro}
	There exist $0<m\le M<\infty$ such that
	$$ (1+m)f(t)\le tf'(t)\le (1+M)f(t),\quad t\in \R.$$
\end{assumption}
\noindent The novelty here is that this class of functions somewhat extends the class of power functions $f(t)=t^p$, $p>1$, for which a large solution for the spectral Laplacian was constructed in \cite{AbaDupaNonhomo2017}. In the non-fractional setting of this paper, this construction happens to be more technically difficult, and we obtain it by building on the results in \cite{AbaDupaNonhomo2017,semilinear_cvw,Aba17}. First, motivated by the potential-theoretic properties of the renewal function of the corresponding subordinate Brownian motion, we construct a supersolution to \eqref{eq:problem_intro}. This is done by establishing the following Keller-Osserman-type condition: 
 \begin{equation*}
\int_r^\infty \frac{dt}{\phi^{-1}(\varphi(t)^{-2})}\lesssim\frac{r}{\phi^{-1}(\varphi(r)^{-2})},\qquad r\ge 1,
\end{equation*}
which is sufficient for this construction. Here the function $\varphi$ completely depends on $f$, so in other words the Keller-Osserman-type condition gives a relation between $f$ and $\phi$ under which a large solution to \eqref{eq:problem_intro} exists, for details see Subsection \ref{ss:construction large}. Then, under the additional assumption which ensures the right boundary behaviour of our supersolution, see \eqref{eq:bdry_condition}, we find a large solution to \eqref{eq:problem_intro} by constructing a sequence of approximating moderate solutions. At the end of the section, we prove two notable results. In Theorem \ref{t:minimal large sol} we prove that the obtained large solution is the minimal solution to \eqref{eq:problem_intro}, and in Theorem \ref{t:maximal large sol}, we prove that for every supersolution to \eqref{eq:problem_intro}, there exists the maximal solution dominated by it.

In order to show the desired boundary blow-up of the constructed large solution to \eqref{eq:problem_intro}, we also prove a higher regularity theorem in H\"older spaces for distributional solutions for the operator $\Lo$, as well as for the operator $\LoR$. The latter is the infinitesimal generator of the subordinate Brownian motion and is a direct generalisation of the fractional Laplacian. In Theorem \ref{t:regularity SKBM} we prove that for $f\in C^\alpha(D)$, every distributional solution to 
\[
\Lo u=f\text{ in } D
\]
is in $C^{\alpha+2\delta_1}(D)$ and the usual inequality between the appropriate norms of $f$ and $u$ holds. The detailed proof of this result generalizes the analogous higher regularity approach for the spectral fractional Laplacian given in \cite[Lemma 21]{AbaDupaNonhomo2017}, as well as emphasizes the dimension constraint $d\ge 3$. This limitation seems to be also present, although not explicitly stated in \cite{AbaDupaNonhomo2017,Silvestre2007regularity}, and is necessary for the applied potential-theoretical approach. For details, we refer to Remarks \ref{r:Gf indefinite no1} and \ref{r:Gf indefinite no2}. Consequently, the large solution results obtained in this paper and \cite{AbaDupaNonhomo2017} hold only for $d\ge3$.

One of the most important ingredients in the proof of Theorem \ref{t:regularity SKBM} is a higher regularity theorem for $\LoR$, proved in Theorem \ref{t:regularity SBM}: if $f\in C^\alpha(\R)$, then every distributional solution to $\LoR u=f$ in $\R^d$ is in $C^{\alpha+2\delta_1}(\R)$. The proof of this theorem is motivated by the celebrated article \cite{Silvestre2007regularity}. The approach in the proof of the higher regularity theorem for the fractional Laplacian, \cite[Proposition 2.8]{Silvestre2007regularity}, again exhibits the dimensional constraint $d\ge 3$, which we highlight in the proof of Theorem \ref{t:regularity SBM}. However, this problem was overcome in the fractional setting, see e.g. \cite{Ros-OtonSerra2016}, by using an alternative method on a larger class of operators.

There are many generalisations of \cite[Proposition 2.8]{Silvestre2007regularity} considering generalised H\"older spaces which are more suitable for the development of regularity theory for non-local operators more general than the fractional Laplacian, e.g.~in \cite{remarks_on_nonlocal,Ros-OtonSerra2016,KKLL,KimLee}. The downside of these approaches is that these regularity results usually have an index gap. The major contribution of Theorem \ref{t:regularity SBM} is that the regularity, although obtained in classical H\"older spaces, holds for all Matuszewska indices $0<\delta_1\le\delta_2<1$. We point out that similar results to ours exist in generalised H\"older spaces in \cite{BaeKassmann,KimLee} but they do not cover the whole span of indices $\delta_1$ and $\delta_2$, since their techniques require that indices $\delta_1$ and $\delta_2$ stay close together. It is worth mentioning that, under this assumption on the Matuszewska indices, one can prove a higher regularity claim for $\Lo$ in terms of the generalised H\"older spaces, similar to the one in Theorem \ref{t:regularity SKBM}. Nevertheless, our primary objective is to find a large solution for the operator $\Lo$ for the whole span of Matuszewska indices, and the classical H\"older regularity in Theorem \ref{t:regularity SKBM} suits this purpose well.

The article is organized as follows. At the end of this introductory section, we cover the basic notation used throughout the article. Section \ref{s:prelims} is the preliminary section that deals with the consequences of the assumption \eqref{WSC_intro}, recalling some recent results on the Green kernel and the Poisson kernel of $\Lo$. We also give the precise connection of the operator $\Lo$ with the subordinate killed Brownian motion, and the connection of the operator $\LoR$ with the subordinate Brownian motion, as well as their pointwise representations. We finish this section by recalling the known but delicate differences in different types of solutions and boundary conditions for the Dirichlet problem \eqref{eq:problem_intro}. In particular, in Lemma \ref{l:large solution harmonic unbounded} we prove that a solution of \eqref{eq:problem_intro} is not bounded by any nonnegative harmonic function. In addition, we note the definition and properties of the renewal function $V$, the prominent function in the study of boundary behaviour in $C^{1,1}$ domains. In Section \ref{s:regularity} we present the regularity theory for the operators $\LoR$ and $\Lo$  and prove the previously highlighted higher regularity Theorems \ref{t:regularity SBM} and \ref{t:regularity SKBM}. In the last part, Section \ref{s:large}, we find a large solution to \eqref{eq:problem_intro}. First, we build an auxiliary sequence of solutions to moderate problems. Then, we construct a large supersolution in Subsection \ref{ss:construction large} and find the appropriate Keller-Osserman-type condition. Finally, in Theorem \ref{t:large solution} we prove the existence of a large solution to \eqref{eq:problem_intro}.

\subsection*{Notation}
For an open set $D$, $U\subsub D$ means that $U$ is a nonempty, bounded and open set such that $U\subset \overline U\subset D$, where $\overline U$ denotes the closure of $U$. The boundary of $D$ is denoted by $\partial D$, and $|x|$ denotes the Euclidean norm in $\R^d$. For sets $A,B\subset \R^d$, let $\dist(A,B)\coloneqq \inf\{|x-y|:x\in A,\,y\in B\}$, $\diam D\coloneqq \sup\{|x-y|:x,y\in D\}$, and $\de(x)\coloneqq \dist(x,\partial D)$. By $B(x,r)$ we denote the ball centered at $x\in \R^d$ with radius $r>0$.

We use the standard notation $C(D)$ to denote the space of continuous functions in $D$, $C_b(D)$ the space of bounded functions in $D$, and $C_0(D)$ the space of continuous functions in $D$ that vanish at infinity, i.e.~$f\in C_0(D)$ if for every $\varepsilon>0$ there exists a compact subset $K\subset D$ such that $|f(x)|<\varepsilon$, $x\in D\setminus K$. With $C^k(D)$, $k\ge 1$, we denote all $k$-times differentiable functions in $D$ and $C^\infty(D)$ denotes all infinitely differentiable functions in $D$, where $C_c^\infty(D)$ denotes those with compact support. For $k\in \N_0$ and $\alpha\in (0,1]$, $C^{k,\alpha}(D)$ denotes the space of $k$-times differentiable functions whose all $k$-th derivatives are H\"older continuous in $D$ with the exponent $\alpha$. If $D=\R^d$, we abbreviate the notation by writing, e.g., $C^\infty$ instead of $C^\infty(\R^d)$.

By default, all functions in the article are assumed to be Borel functions, and the space of nonnegative Borel functions in $D$ is denoted by $\BB_+(D)$. The space of $L^p$-integrable functions with respect to the measure $\mu$ is denoted by $L^p(D,\mu)$, where we leave out the set $D$ if $D=\R^d$, and we leave out the measure $\mu$ in the case of the Lebesgue measure on $\R^d$. In the case of local integrability, we write $L_{loc}^p(D,\mu)$. We write $L^\infty(D)$ ($L_{loc}^\infty(D)$) for the set of all (locally) bounded functions in $D$. The set $H_0^1(D)$ is the closure of $C_c^\infty(D)$ in the Sobolev space $H^1(D)=W^{1,2}(D)$ (with respect to the Sobolev norm).

By $\|\cdot\|_{C^{k,\alpha}(D)}$ we denote the standard H\"older norm in $D$, namely
$$\|f\|_{C^{k,\alpha}(D)}=\sum_{j=0}^k \| D^j f\|_{L^\infty(D)}+\sup_{x,y\in D}\frac{|D^kf(x)-D^kf(y)|}{|x-y|^{\alpha}},$$
where the last term we often abbreviate by $[f]_{k,\alpha,D}$.
When there is no reason for confusion, we also abbreviate the standard $L^\infty(D)$ norm by $\|\cdot\|_{L^\infty(D)}=\|\cdot\|_\infty$.

Unimportant constants in the article are denoted by small letters $c$, $c_1$, $c_2$, \dots. Their labeling starts anew in each new statement, and their value may change from line to line. By $C$ we denote more important constants, where $C(a,b)$ means that the constant $C$ depends only on the parameters $a$ and $b$. Naturally, all constants are positive real numbers. We set $a\vee b\coloneqq \max\{a,b\}$ and $a\wedge b \coloneqq \min \{a,b\}$. Finally, for positive functions $f$ and $g$, we write $f\lesssim g$ ($f\gtrsim g$, $f\asymp g$) if there exists a constant $c>0$ such that $f\le c\, g$ ($f\ge c \,g$, $c^{-1} g\le f \le c\,g$).

\section{Preliminaries}\label{s:prelims}
	Let $(W_t)_{t\ge0}$ be a Brownian motion in $\R^d$, $d\ge3$, with the characteristic exponent $\xi\mapsto |\xi|^2$, $\xi\in\R^d$. Let $D$ be a non-empty open set, and $\tau_D\coloneqq \inf\{t>0:W_t\notin D\}$ the first exit time from the set $D$. We define the killed process $W^D$ upon exiting the set $D$ by
\begin{align*}
	W^D_t\coloneqq\begin{cases}
		W_t,&t<\tau_D,\\
		\partial, &t\ge \tau_D,
	\end{cases}
\end{align*}
where $\partial$ is an additional point added to $\R^d$ called the cemetery.

Let $S$ be a subordinator independent of $W$, i.e. $S$ is an increasing L\'evy process such that $S_0=0$, with the Laplace exponent
\begin{align}\label{eq:Laplace exponent defn}
	\lambda\mapsto \phi(\lambda)=b\lambda+\int_{0}^\infty (1-e^{-\lambda\,t})\mu(dt),
\end{align}
where $b\ge0$ and the measure $\mu$ satisfies $\int_0^\infty (1\wedge t)\mu(dt)<\infty$. The measure $\mu$ is called the L\'evy measure and $b$ is the drift of the subordinator. 

The process $X=((X_t)_{t\ge0},(\p_x)_{x\in D})$ defined by $X_t\coloneqq W^D_{S_t}$ is called the subordinate killed Brownian motion. Here $\p_x$ denotes the probability under which the process $X$ starts from $x\in D$, and by $\ex_x$ we denote the corresponding expectation. As we will explain in Subsection \ref{ss:operator}, the operator $\Lo$ can be viewed as the infinitesimal generator of $X$.

An additional important process for studying equations related to $\Lo$ is the process $Y=((Y_t)_{t\ge0},(\p_x)_{x\in D})$ defined by $Y_t\coloneqq W_{S_t}$, called the subordinate Brownian motion. In other words, the process $W$ is not killed upon exiting the set $D$ before the subordination takes place. By using this close connection of the processes $X$ and 
 $Y$, later in the article we obtain the regularity of distributional solutions for $\Lo$.

\subsection{Assumptions}
Throughout the article, we always assume that $D$ is a bounded $C^{1,1}$ domain, although some auxiliary results are also valid for general open sets.

Before we introduce the assumption on the Laplace exponent $\phi$, i.e. on the subordinator $S$, we note that a function of the form \eqref{eq:Laplace exponent defn} is called a Bernstein function, and such functions characterize subordinators, see \cite[Chapter 5]{bernstein}.
The following assumption on $\phi$ is imposed throughout the article.
\begin{assumption}{WSC}{}\label{WSC}
	The function $\phi$ is a complete Bernstein function, i.e. the L\'evy measure $\mu(dt)$ has a completely monotone density $\mu(t)$, and  $\phi$ satisfies the following weak scaling condition at infinity: There exist $a_1,a_2>0$ and $\delta_1, \delta_2  \in(0,1)$ satisfying
	\begin{align}\label{eq:scaling condition}
		a_1\lambda^{\delta_1}\phi(t)\le \phi(\lambda t)\le a_2\lambda^{\delta_2}\phi(t),\quad t\ge1, \lambda\ge 1.
	\end{align}
\end{assumption}

The most notable subordinator with the property \ref{WSC} is the $\alpha$-stable subordinator where $\phi(\lambda)=\lambda^{\alpha/2}$, for some $\alpha\in(0,2)$. It satisfies the exact and even global scaling condition \eqref{eq:scaling condition}, and the process $Y$, in this case, is the isotropic $\alpha$-stable process. However, many other interesting subordinators fall within our setting, such as a sum of two stable subordinators, relativistic stable subordinators, tempered stable subordinators, and many more, see e.g. \cite[p. 409]{KSV_bhpinf} and \cite[p. 41]{vondra_heat}.

By the assumption \ref{WSC}, the function 
\begin{equation}\label{eq:phi_conjugate}
\phi^*(\lambda)\coloneqq \frac{\lambda}{\phi(\lambda)}
\end{equation}
is a complete Bernstein function, too, see \cite[Proposition 7.1]{bernstein}, and is called the conjugate Bernstein function of $\phi$. The relation \eqref{eq:scaling condition} also holds for $\phi^*$ but with different constants: i.e. it holds that
\begin{align}\label{eq:scaling condition phi*}
    {\frac1{a_2}} \lambda^{1-\delta_2}\phi^*(t)\le \phi^*(\lambda t)\le {\frac1{a_1}}\lambda^{1-\delta_1}\phi^*(t),\quad t\ge1, \lambda\ge 1.
\end{align}
Thus, \ref{WSC} also holds for $\phi^*$ and by $\nu(dt)=\nu(t)dt$ we denote the L\'evy measure of $\phi^*$.

In the next few paragraphs, we discuss properties of $\phi$. Since $\phi^*$ also satisfies the condition \ref{WSC}, the same will hold for the counterparts of $\phi^*$.
By a simple calculation, the scaling condition \eqref{eq:scaling condition} implies that $b=0$ in \eqref{eq:Laplace exponent defn} and the well-known bound
\begin{align}\label{eq:scaling and the derivative}
	\phi'(\lambda)\asymp \frac{\phi(\lambda)}{\lambda}, \quad\lambda\ge1.
\end{align}
Here, the lower bound follows from \eqref{eq:scaling condition} and the upper bound holds for every Bernstein function and every $\lambda>0$. The L\'evy measure $\mu(dt)$ is infinite, see \cite[p. 160]{bernstein}, and the density $\mu(t)$ cannot decrease too fast, i.e. there is $c=c(\phi)>1$ such that
$$ \mu(t)\le c\mu(t+1), \quad t\ge 1,$$
see \cite[Lemma 2.1]{ksv_twosided}. Furthermore, the density $\mu$ admits a sharp bound expressed via the function $\phi$, see \cite[Eq. (2.13)]{ksv_minimal2016} and \cite[Proposition 3.3]{mimica}, but we will not directly use this bound.

The potential measure $U$ of the subordinator $S$ is defined by $U(A)\coloneqq \int_0^\infty \p(S_t\in A)dt$, $A\in \BB(\R)$, and it has a decreasing density $\uu$ for which it holds that $\int_0^1 \uu(t)dt<\infty$, see \cite[Theorem 11.3]{bernstein}. The density $\uu$ also admits a sharp bound expressed via $\phi$, see \cite[Eq. (2.11)]{ksv_minimal2016} and \cite[Proposition 3.4]{mimica}, but in the article, they are used only in some results that we quote later. The potential density of $\phi^*$ will be denoted by $\vv$.

{Although} not all Bernstein functions satisfy \eqref{eq:scaling condition}, a general Bernstein function satisfies a version of a global scaling condition:
\begin{align}\label{eq:simple global scaling}
	1\wedge \lambda \le \frac{\phi(\lambda \, t)}{\phi(t)}\le 1\vee  \lambda,\quad \lambda>0,t>0,
\end{align}
which we get directly from \eqref{eq:Laplace exponent defn}.

Under the imposed assumptions of this article, important aspects of the potential theory of the process $X$ were developed in recent years in the articles \cite{song_vondra_JTP2006,ksv_minimal2016,ksv_potential_SKBM_2020,ksv_BdryKillLevy2020,Bio23}. These results include the scale invariant Harnack principle and the boundary Harnack principle, sharp estimates on the Green function and the jumping kernel, as well as the representation of harmonic functions. Although we do not use most of these properties directly, they are heavily used in the proofs of cited results and were essential for obtaining the results we build on.

	\subsection{Green functions}
The transition density of the Brownian motion $W$ we denote by
\begin{align}\label{eq:trans.dens.BM}
	p(t,x,y)=(4\pi t)^{-d/2}e^{-\frac{|x-y|^2}{4t}},\quad x,y\in\R^d,\, t>0.
\end{align}
It follows that the transition density of the killed Brownian motion $W^D$ is given, for all $x,y\in\R^d$, by
\begin{align}\label{eq:trans.dens. KBM}
	p_D(t,x,y)=p(t,x,y)-\ex_x[p(t-\tau_D,W_{\tau_D},y)\1_{\{\tau_D<t\}}]
\end{align}
It is well known that $p_D(t,\cdot,\cdot)$  is symmetric and  differentiable up to the boundary, i.e. $p_D(\cdot,\cdot,\cdot)\in C^1((0,\infty)\times \overline D\times \overline D)$, since $D$ is a $C^{1,1}$ open domain, see \cite[Lemma A.7]{Bio23}. In addition, we have the following heat kernel estimate: There exist constants $T_0=T_0(D)>0$, $c_1=c_1(T_0,D)>0$, $c_2=c_2(T_0,D)>0$, $c_3=c_3(D)>0$, and $c_4=c_4(D)>0$ such that for all $x,y\in D$ and $t\in(0,T_0]$ it holds that
\begin{align}
\begin{split}\label{eq:heat kernel estimate}
    &\left[\frac{\de(x)\de(y)}{t}\wedge 1\right]\frac{1}{c_1 	t^{d/2}}e^{-\frac{|x-y|^2}{c_2t}}\\&\hspace{6em}\le p_D(t,x,y)\le \left[\frac{\de(x)\de(y)}{t}\wedge 1\right]\frac{c_3}{t^{d/2}}e^{-\frac{c_4|x-y|^2}{t}}.
\end{split}
\end{align}
In fact, the right-hand side inequality in \eqref{eq:heat kernel estimate} holds for every $t>0$. For details, refer to \cite[Theorem 3.1 \& Theorem 3.8]{song_sharp_bounds_2004}, cf. \cite[Theorem 1.1]{zhang_heat_kernel} and \cite[Theorem 4.6.9]{davies_HeatKernel}.

The semigroup $(P_t)_{t\ge 0}$ of the process $W$ is given by
\begin{align}\label{eq:Brownian semigroup Rd}
	P_t f(x)=\int_{\R^d}p(t,x,y)f(y)dy=\ex_x[f(W_t)],
\end{align}
for  $f\in L^\infty(\R^d)\cup \BB_+(\R^d)$, which can be extended to a strongly continuous semigroup on $L^2(\R^d)$ and $C_0(\R^d)$, see \cite[Theorem 2.7]{chung_zhao}. It is very well known that $(P_t)_t$ satisfies both Feller, i.e. $P_t \big(C_0(\R^d)\big)\subseteq C_0(\R^d)$, and strong Feller property, i.e. $P_t \big(L^\infty(\R^d)\big)\subseteq C_b(\R^d)$, see  e.g. \cite[Section 1.1]{chung_zhao}. The potential kernel of $W$ (or the Green function of $W$) is given by
\begin{align*}
	G_{\R^d}(x,y)=\int_0^\infty p(t,x,y)dt=|x-y|^{-d+2},\quad x,y\in\R^d,
\end{align*}
since we consider only dimensions $d\ge 3$. The kernel $G_{\R^d}$ is the density of the mean occupation time for $W$, i.e. for $f\ge0$ we have
$$ \int_{\R^d} G_{\R^d}(x,y)f(y)dy=\ex_x\left[\int_0^\infty f(W_t)dt\right], \quad x\in D.$$

The semigroup $(P_t^D)_{t\ge 0}$ of the killed Brownian motion $W^D$ is for $f\in L^\infty(D)\cup \BB_+(\R^d)$ given by
\begin{align*}
	P_t^D f(x)=\int\limits_Dp_D(t,x,y)f(y)dy=\ex_x[f(W_t);t<\tau_D]=\ex_x[f(W^D_t)],
\end{align*}
where we set $f(\partial)=0$ for all Borel functions on $D$ by convention. Since $D$ is $C^{1,1}$, it is well known that the semigroup $(P_t^D)_{t\ge 0}$ is  strongly Feller and can be uniquely extended to a $L^2(D)$ and a $C_0(D)$ semigroup. For details see e.g. \cite[Chapter 2]{chung_zhao}.

The potential kernel of $W^D$ (or the Green function of $W^D$) is defined as
\begin{align*}
	\GD(x,y)=\int_0^\infty p_D(t,x,y)dt,\quad x,y\in\R^d.
\end{align*}
The kernel $\GD$ is symmetric, nonnegative, finite off the diagonal and jointly continuous in the extended sense, see \cite[Theorem 2.6]{chung_zhao}. Also, it is the density of the mean occupation time for $W^D$, i.e. for $f\ge0$ we have
$$ \int_D \GD(x,y)f(y)dy=\ex_x\left[\int_0^\infty f(W^D_t)dt\right]=\ex_x\left[\int_0^{\tau_D} f(W_t)dt\right], \quad x\in D.$$

The process $X$ is obtained by subordinating the killed Brownian motion $W^D$, hence the $L^2(D)$ transition semigroup of $X$, denoted by $(Q_t^D)_{t\ge0}$, is given by

\begin{align*}
	Q_t^D f=\int_0^\infty P_s^Df \,\p(S_t\in ds),\quad f\in L^2(D),
\end{align*}
see \cite[Proposition 13.1]{bernstein}, and admits the density 
\begin{align*}
	q_D(t,x,y)=\int_0^\infty p_D(s,x,y)\;\p(S_t\in ds).
\end{align*}
The semigroup $(Q_t^D)_t$ is also strongly Feller since $(P_t^D)_t$ is, see \cite[Proposition V.3.3]{bliedtner}.
The process $X$ also admits the potential kernel (i.e. the Green function of $X$) which is given by
\begin{align*}
	\GDfi(x,y)=\int_0^\infty q_D(t,x,y)dt=\int_0^\infty p_D(t,x,y)\uu(t)dt,\quad x,y\in\R^d.
\end{align*}
The kernel $\GDFI$ is symmetric, nonnegative, infinite on the diagonal, and finite and jointly continuous off the diagonal, see \cite[p. 857]{Bio23} and references therein. Moreover, $\GDFI$ is the density of the mean occupation time for $X$, i.e. for $f\ge0$ we have
$$ \int_D \GDFI(x,y)f(y)dy=\ex_x\left[\int_0^\infty f(X_t)dt\right], \quad x\in D.$$
In \cite[Lemma A.1]{Bio23}, building on \cite[Theorem 3.1]{ksv_minimal2016}, the sharp estimate for $\GDFI$ was obtained, i.e. we have for all $x,y\in D$
\begin{align}\label{eq:Green function sharp estimate}
	\GDfi(x,y)\asymp \left(\frac{\de(x)\de(y)}{|x-y|^{2}}\wedge 1\right)\frac{1}{|x-y|^{d}\phi(|x-y|^{-2})}.
\end{align}
This sharp estimate is not used directly in the article but was essential in some of the results that we cite.

The Poisson kernel of $X$ is defined by
\begin{align*}
		\PDfi(x,z)\coloneqq -\frac{\partial}{\partial {\mathbf{n}}}\GDfi(x,z)\asymp \frac{\de(x)}{|x-z|^{d+2}\phi(|x-z|^{-2})},\quad x\in D,z\in\partial D,
\end{align*}
where $\frac{\partial}{\partial \mathbf{n}}$ denotes the derivative in the direction of the outer normal. The well-definiteness as well as the sharp bound of the kernel $\PDFI$ were proved in \cite[Proposition 2.19]{Bio23}. The boundary condition in our semilinear problem contains the Poisson integral of the $d-1$ dimensional Hausdorff measure on $\partial D$, denoted by $\sigma$, which is defined as
\begin{align}\label{eq:PDFI_sigma}
     \PDFI\sigma(x)\coloneqq \int_{\partial D}\PDFI(x,z)\sigma(dz)\asymp \frac{1}{\de(x)^2\phi(\de(x)^{-2})},\enskip x\in D,
\end{align}
where the sharp bound for the Poisson integral above was proved in  \cite[Lemma 3.1]{Bio23}.

By the probabilistic characterization of Bernstein functions, the conjugate Bernstein function $\phi^*$ generates a subordinator $(T_t)_{t\ge0}$, see \cite[Chapter 5]{bernstein}. From what was already mentioned in the case of the function $\phi$, we also have in the case of the function $\phi^*$. In other words, the subordinator $(T_t)_{t\ge0}$ has a potential measure that also has the decreasing density which we denote by $V(dt)=\vv(t)dt$, see \cite[Theorem 11.3 \& Corollary 11.8]{bernstein}. We can also define the potential kernel generated by $\phi^*$ with
\begin{align}\label{eq:defn Green function 2}
	\GDfz(x,y)=\int_0^\infty p_D(t,x,y)\vv(t)dt,\quad x,y\in\R^d.
\end{align}
Recall that $\phi^*$ satisfies \ref{WSC}, too, so $\GDfz$ satisfies the same properties mentioned for $\GDFI$, e.g. it is symmetric, finite off the diagonal, jointly continuous, and satisfies the sharp bound \eqref{eq:Green function sharp estimate} where $\phi$ is replaced by $\phi^*$. Obviously, the kernel $\GDfz$ can be viewed as the potential kernel of the process $(W_{T_t}^D)_{t\ge 0}$.

As we have already mentioned, the process $Y$ is obtained by subordinating the Brownian motion $W$. Since $Y$ is a subordinate L\'evy process, it is again a L\'evy process, and by the construction, it has the characteristic exponent $\xi\mapsto \Psi(\xi)=\phi(|\xi|^2)$ of the form
\begin{align*}
		\Psi(\xi)=\phi(|\xi|^2)=\int_{\R^d}\left(1-\cos(\xi\cdot x)\right) J(dx),\quad \xi\in\R^d,
\end{align*}
where $J$, the so-called L\'evy measure of $Y$, satisfies $\int_{\R^d} (1\wedge |x|^2)J(dx)<\infty$. Also, $J$ has a density given by $J(x)=j(|x|)$, $x\in\R^d$, where
	\begin{align*}
		j(r)\coloneqq \int_0^\infty p(t,x,y)\mu(t)dt=\int_0^\infty (4\pi t)^{-d/2} e^{-r^2/(4t)}\mu(t)dt,\quad r>0.
	\end{align*}
The density $j$ is positive, continuous, decreasing and satisfies $\lim\limits_{r\to\infty}j(r)=0.$
It is well known that, if $\phi$ is a complete Bernstein function, then
\begin{align}
	j(r)\lesssim j(r+1)\enskip&\text{and }\enskip j(r)\lesssim j(2r), \quad r\ge 1,\label{eq:slow decr jump kern}\\
    \text{and}\hspace{2em} j(r)&\asymp \frac{\phi(r^{-2})}{r^d},\quad r\le1,\label{eq:sharp bnd jump kern}
\end{align}
where the constants of comparability depend only on $d$ and $\phi$, see e.g. \cite[Theorem 2.3, Eq. (2.11) \& Eq. (2.12)]{KSV_2012_SCM}. We also note that the upper bound for \eqref{eq:sharp bnd jump kern} holds for every $r>0$, see \cite[Eq. (15)]{bogdan_density_and_tails_unimodal}.

Since $d\ge 3$, the process $Y$ is transient, i.e. $\p_x(\lim_{t\to\infty}|X_t|=\infty)=1$, and we can define the potential kernel of $Y$, i.e. the Green function of $Y$, by
	\begin{align*}
		G^{\phi}_{\R^d}(x,y)\coloneqq G^{\phi}_{\R^d}(x-y)=\int_{0}^\infty p(t,x,y)\uu(t)dt, \quad x\in \R^d.
	\end{align*}
	The kernel $G^{\phi}_{\R^d}$ is the density of the mean occupation time for $Y$, i.e. for $f\ge0$ we have
	\begin{align*}
		\int_{\R^d}G^{\phi}_{\R^d}(x-y)f(y)dy=\mathbb{E}_x\left[\int_{0}^{\infty}f(Y_t)dt\right],\quad x\in\R^d.
	\end{align*}
	Further, $G^{\phi}_{\R^d}$ is jointly continuous, rotationally symmetric and radially decreasing, and from \cite[Lemma 3.2(b)]{KSV_global_UBHP} we get that for every $M>0$ there is a constant $C=C(d,\phi,M)>0$ such that 
	\begin{align*}
		C^{-1} \frac{1}{|x|^d\phi(|x|^{-2})}\le G^{\phi}_{\R^d}(x) \le	C \frac{1}{|x|^d\phi(|x|^{-2})},\quad |x|\le M.
	\end{align*}
 
Since $\phi^*$ also satisfies \ref{WSC}, when we subordinate the Brownian motion $W$ with the subordinator $T$, i.e. when the Laplace exponent of the subordinator is $\phi^*$, the relations mentioned above also hold where $\phi$ is replaced by $\phi^*$. For future reference, the Green function of this process, i.e. of $(W_{T_t})_t$, we denote by $G^{\phi^*}_{\R^d}$, and its jumping kernel by $j^*$.

To finish the subsection, we bring a connection of the kernels $\GD$, $\GDFI$ and $\GDfz$, and of the kernels $G_{\R^d}$, $G^{\phi}_{\R^d}$ and $G^{\phi^*}_{\R^d}$.
\begin{lem}\label{l: GDf(GDf*)=GD}
	Let $\Omega$ be $D$ or $\R^d$. For $x,y\in \Omega$ it holds that
	\begin{align}\label{eq: GDf(GDf*)=GD}
		\int_\Omega G^{\phi}_{\Omega}(x,\xi)G^{\phi^*}_{\Omega}(\xi,y)d\xi=G_{\Omega}(x,y).
	\end{align}
\end{lem}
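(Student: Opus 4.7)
The plan is to reduce everything to the heat kernel representation of the three Green kernels and then use the Chapman--Kolmogorov identity together with the fact that the potential measures $U$ and $V$ of the conjugate subordinators convolve to Lebesgue measure.

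First I would write out the three Green kernels as time integrals:
\begin{align*}
G^{\phi}_{\Omega}(x,\xi)&=\int_0^\infty p_\Omega(s,x,\xi)\uu(s)\,ds,\\
G^{\phi^*}_{\Omega}(\xi,y)&=\int_0^\infty p_\Omega(t,\xi,y)\vv(t)\,dt,\\
G_\Omega(x,y)&=\int_0^\infty p_\Omega(r,x,y)\,dr,
\end{align*}
where $p_\Omega$ is the killed heat kernel if $\Omega=D$ and the free heat kernel $p$ from \eqref{eq:trans.dens.BM} if $\Omega=\R^d$. Since all integrands are nonnegative, Tonelli's theorem allows me to interchange the $\xi$, $s$ and $t$ integrals freely. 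Applying the Chapman--Kolmogorov identity $\int_\Omega p_\Omega(s,x,\xi)p_\Omega(t,\xi,y)\,d\xi=p_\Omega(s+t,x,y)$, I obtain
\begin{equation*}
\int_\Omega G^{\phi}_{\Omega}(x,\xi)G^{\phi^*}_{\Omega}(\xi,y)\,d\xi=\int_0^\infty\!\!\int_0^\infty \uu(s)\vv(t)\,p_\Omega(s+t,x,y)\,ds\,dt.
\end{equation*}
Substituting $r=s+t$ in the inner integral and swapping the order once more gives
\begin{equation*}
\int_\Omega G^{\phi}_{\Omega}(x,\xi)G^{\phi^*}_{\Omega}(\xi,y)\,d\xi=\int_0^\infty p_\Omega(r,x,y)\Bigl(\int_0^r \uu(s)\vv(r-s)\,ds\Bigr)dr.
\end{equation*}

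The remaining ingredient is the convolution identity $(\uu * \vv)(r)=1$ for a.e.\ $r>0$. This is the key point and I would justify it via Laplace transforms: the potential measures $U(dt)=\uu(t)dt$ and $V(dt)=\vv(t)dt$ of the subordinators with exponents $\phi$ and $\phi^*$ satisfy $\int_0^\infty e^{-\lambda t}U(dt)=1/\phi(\lambda)$ and $\int_0^\infty e^{-\lambda t}V(dt)=1/\phi^*(\lambda)$. By the definition \eqref{eq:phi_conjugate} of the conjugate, $\phi(\lambda)\phi^*(\lambda)=\lambda$, so the Laplace transform of $U*V$ equals $1/\lambda$, which is the Laplace transform of Lebesgue measure on $(0,\infty)$. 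By the uniqueness of Laplace transforms, $U*V=\mathrm{Leb}$, i.e.\ $\int_0^r \uu(s)\vv(r-s)\,ds=1$ for almost every $r>0$. Inserting this back gives
\begin{equation*}
\int_\Omega G^{\phi}_{\Omega}(x,\xi)G^{\phi^*}_{\Omega}(\xi,y)\,d\xi=\int_0^\infty p_\Omega(r,x,y)\,dr=G_\Omega(x,y),
\end{equation*}
which is \eqref{eq: GDf(GDf*)=GD}.

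The only step requiring real care is the convolution identity for the potential densities; everything else is Tonelli plus Chapman--Kolmogorov. Note that the argument is identical for $\Omega=\R^d$ and $\Omega=D$ because both cases share the semigroup property of their respective heat kernels, and in both cases the kernels are nonnegative so no integrability issue arises in the application of Tonelli. The identity holds for all $x,y$ in the diagonal or off-diagonal as long as both sides are interpreted consistently, with possible values $+\infty$ permitted.
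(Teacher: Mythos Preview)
Your proof is correct. The paper's own proof simply cites \cite[Proposition 14.2(ii)]{bernstein} with $\gamma=\delta_y$; what you have done is essentially write out the argument behind that reference, namely the Chapman--Kolmogorov identity combined with the Laplace-transform fact that the potential measures of conjugate subordinators convolve to Lebesgue measure.
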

\begin{proof}
	The claim follows from \cite[Proposition 14.2(ii)]{bernstein} where we set $\gamma=\delta_y$.
\end{proof}

\subsection{Operators \texorpdfstring{$\phi(-\Delta\vert_D)$}{phi(-Delta|D)} and \texorpdfstring{$\phi(-\Delta)$}{phi(-Delta)}}\label{ss:operator}
Let $\{\varphi_j\}_{j\in\N}$ 
be a Hilbert basis of $L^2(D)$ consisting of eigenfunctions 
of the Dirichlet Laplacian $-\Delta\vert_D$, 
associated to the eigenvalues $\lambda_j$, $j\in\N$, i.e.  $\varphi_j\in H^1_0(D)\cap C^{ \infty}(D)\cap C^{1,1}(\overline D)$ and
\begin{align}\label{eq:defn of eigenvalues}
	-\Delta\vert_D\varphi_j=\lambda_j\varphi_j, \quad \text{in $D$},
\end{align}
see \cite[Theorem 9.31]{brezis} and \cite[Section 8.11]{gilbarg_pde}. The relation \eqref{eq:defn of eigenvalues} can be viewed as a distributional or a pointwise relation. In addition, $\Delta\vert_D$ in \eqref{eq:defn of eigenvalues} can be viewed as the $L^2(D)$-infinitesimal generator of the semigroup $(P_t^D)_t$, i.e.
\begin{align*}
	\left.\Delta\right\vert_D u=\lim_{t\to 0}\frac{P_t^D u-u}{t}, \quad u\in \DD(\left.\Delta\right\vert_D).
\end{align*}
Here $\DD(\Delta\vert_D)$ denotes the domain of the generator $\Delta\vert_D$ and the above limit is taken with respect to $L^2(D)$ norm. Since $\varphi_j$ is an eigenfunction, we note that
\begin{align}\label{eq:Pt=e{-lt}}
    P_t^D\varphi_j=e^{-\lambda_j t}\varphi_j, \quad \text{in $D$}.
\end{align}
For the future reference, it is worth noting that the space $\DD(\left.\Delta\right\vert_D)$ is a class of functions $f\in H_0^1(D)$ such that $\Delta f$ exists in the weak distributional sense and belongs to $L^2(D)$, see \cite[Theorem 2.13]{chung_zhao}. In particular, $C_c^2(D)\subset \DD(\left.\Delta\right\vert_D)$. For more details on the operator $\left.\Delta\right\vert_D$, refer to \cite[Chapter 2]{chung_zhao} and \cite[Chapter 9]{brezis}. 

By the assumption that $D$ is $C^{1,1}$, it follows that $0<\lambda_1< \lambda_2\le\lambda_3\le \dots$, and the Weyl's asymptotic law holds:
\begin{align}\label{eq:Weyl's law}
	\lambda_j \asymp j^{2/d},\quad j\in \N.
\end{align}
We may also choose the basis $\{\varphi_j\}_{j\in \N}$ such that $\varphi_1>0$ in $D$, see \cite[Chapter 9]{brezis}, so that very important sharp estimate for $\varphi_1$ holds:
\begin{align}\label{eq:varphi=de sharp estimate}
	\varphi_1(x)\asymp \de(x),\quad x\in D.
\end{align}
The interior estimate is trivial and the boundary estimate follows from Hopf's lemma. We note that this estimate is closely connected to the boundary behaviour of the heat kernel $p_D$, as one can see in \eqref{eq:heat kernel estimate}, and in the choice of the domain for the distributional operators in $D$, as we will see in a moment.

Consider the Hilbert space
	\begin{align*}
		H_D(\phi)\coloneqq \left\{v=\sum_{j=1}^\infty \widehat v_j\varphi_j\in L^2(D):\|v\|_{H_D(\phi)}^2\coloneqq\sum_{j=0}^\infty\phi(\lambda_j)^{2}\vert\widehat v_j\vert^2<\infty\right\}.
	\end{align*}
	The spectral operator $\Lo:H_D(\phi)\to L^2(D)$ is defined as
	\begin{align}\label{eq:definition of phi(Delta D)}
		\Lo u=\sum_{j=1}^\infty\phi(\lambda_j)\widehat u_j\varphi_j, \quad u\in H_D(\phi).
	\end{align}
 In the simple case where $\phi(t)=t^s$, the operator $\Lo=(-\Delta_{|D})^s$ is called the spectral fractional Laplacian.
 It was shown in \cite[Section 2]{Bio23} that the operator $\Lo$ is the $L^2(D)$-infinitesimal generator of the process $X$. Furthermore, it was also shown that $\Lo$ has a pointwise representation: For $u\in C^{1,1}(D)\cap \DD(\left.\Delta\right|_D)$ and for a.e. $x\in D$ we have
		\begin{align}\label{eq:Lo pointwisely}
			\Lo u(x)=\textrm{P.V.}\int\limits_D [u(x)-u(y)]J_D(x,y)dy + \kappa(x)u(x),
		\end{align}
		where
		\begin{align}\begin{split}\label{e:J_D}
		    J_D(x,y)&\coloneqq \int_0^\infty p_D(t,x,y)\mu(t)dt,\\\kappa(x)&\coloneqq\int_0^\infty \left(1-\int_Dp_D(t,x,y)dy\right)\mu(t)dt.
		\end{split}
		\end{align}
For the jumping kernel $J_D(x,y)$, the following sharp estimate holds
\begin{equation}\label{eq:J_D estimate}
    J_D(x,y)\asymp\left(\frac{\de(x)\de(y)}{|x-y|^2}\wedge 1\right)j(|x-y|),\quad x,y\in D,
\end{equation}
see \cite[Proposition 3.5]{ksv_minimal2016} and \cite[Remark 2.5]{Bio23}.

We also define a distributional version of $\Lo$ for all $u\in \LLL$, which we denote by $\wLo u$, by the following relation
\begin{align*}
    \langle \wLo u,\psi\rangle\coloneqq \langle  u,\Lo\psi\rangle\coloneqq\int_D u(x)\Lo \psi (x)dx,
\end{align*}
for all $\psi\in C_c^\infty(D)$. The space $\LLL$ is the appropriate one for the distributional operator because $|\Lo\psi|\le \de$ in $D$, and $-\Lo\psi\asymp \de$ near $\partial D$ for $\psi\ge0$, as shown in \cite[Lemma 2.11]{Bio23}. For additional details on $\Lo$ and $\wLo$, please refer to \cite[Section 2]{Bio23} and the references therein.

\begin{rem}\label{r:pointwise solution}
    By analyzing the proofs of \cite[Lemma 2.4]{Bio23} and \cite[Remark 2.6]{Bio23}, one can easily see that the expression \eqref{eq:Lo pointwisely} makes sense for all $C^{2\delta_2+\varepsilon}(D)\cap \LLL$ functions, where $\varepsilon>0$. This is because \ref{WSC} implies
    \[
    J_D(x,y)\lesssim j(|x-y|)\lesssim |x-y|^{-d-2\delta_2},\quad x,y\in D.
    \]
    Thus, we extend the definition of $\Lo u$ by \eqref{eq:Lo pointwisely} whenever the principal value integral in \eqref{eq:Lo pointwisely} exists.
   Furthermore, if $u\in C^{2\delta_2+\varepsilon}(D)\cap \LLL$ then $\wLo u=\Lo u$ as distributions in $D$.
\end{rem}

Now we turn to the definition of the operator $\LoR$. We define the operator $-\LoR$ as the infinitesimal generator of the $C_0(\R^d)$-semigroup generated by the L\'evy process $Y$. It is well known that the space $C_c^{\infty}(\R^d)$ is contained in the domain of this generator, and for $u\in C_c^{\infty}(\R^d)$ it holds that
\begin{align}
-\LoR u(x)&=\int_{\R^d}\left(u(y)-u(x)-  \nabla u(x) \cdot (y-x)\1_{\{|y-x|\le 1\}}\right)j(|y-x|)\, dy \nonumber\\
&=\textrm{P.V.}\int_{\R^d} \left(u(y)-u(x)\right)j(|y-x|)dy\label{eq:LoR defn}\\
&=\lim_{\epsilon \to 0} \int_{|y-x|>\epsilon} \left(u(y)-u(x)\right)j(|y-x|)\, dy\nonumber\\
&=\lim_{\epsilon \to 0} \int_{|h|>\epsilon} \left(u(x+h)-u(x)\right)j(|h|)\, dh.\nonumber
\end{align}
In the familiar case of the isotropic stable process, where $\phi(t)=t^s$, the operator $-\LoR$ is the fractional Laplacian $-(-\Delta)^s$.

We extend the definition of $\LoR u(x)$ by \eqref{eq:LoR defn} whenever the limit in \eqref{eq:LoR defn} exists. This is true e.g. for all $x\in \R^d$ when $u\in C^{2\delta_2+\varepsilon}(\R^d)\cap L^1(\R^d,(1\wedge j(|x|))dx)$, where $\varepsilon>0$.
For functions $u\in  L^1(\R^d,(1\wedge j(|x|))dx)$ we also define a distributional version of $\LoR$, denoted by $\wLoR$, by the relation
	\begin{align*}
		\langle \wLoR u,\varphi\rangle\coloneqq \langle u,\LoR\varphi\rangle\coloneqq \int_{\R^d}u(x)\LoR\varphi(x)dx,\quad \varphi\in C_c^\infty(\R^d).
	\end{align*}
	It is easy to show that if $u\in C^{2\delta_2+\varepsilon}(\R^d)\cap L^1(\R^d,(1\wedge j(|x|))dx)$ then $\wLoR u=\LoR u$ as distributions in $\R^d$. However, when dealing with $\wLoR u$ we sometimes just simply say that we look at $\LoR u$ in the distributional sense.

\subsection{Different types of solutions and boundary conditions for the Dirichlet problem}\label{ss:dirichlet problem}
In the article, we deal with three types of solutions to the Dirichlet problem involving the operator $\Lo$, and we also have two types of the boundary condition.

We say that $u:D\to \R$ is a pointwise solution to 
\begin{equation}\label{eq:generic problem}
	\begin{array}{rcll}
		\Lo u&=& f& \quad \text{in } D,\\
		\frac{u}{\PDFI\sigma}&=&g&\quad \text{on }\partial D,
	\end{array}
\end{equation}
if $\Lo u =f$ in $D$ in the sense of \eqref{eq:Lo pointwisely} and if
\begin{align}\label{eq:boundary Dirichlet limit}
    \lim_{D \ni x \to z}\frac{u(x)}{\PDFI\sigma(x)}=g(z),\quad  z\in \partial D.
\end{align}

On the other hand, the function $u:D\to \R$ is a distributional solution to \eqref{eq:generic problem} if $\Lo u =f$ in $D$ in the distributional sense. Additionally, for the boundary condition, we can either look at the pointwise limit as in \eqref{eq:boundary Dirichlet limit} or we can look at the so-called weak $L^1$ boundary condition: 
\begin{align}\label{eq:distri solution boundary}
			\lim_{t\downarrow 0}\frac1t \int_{\{\de(x)\le t\}}\frac{u(x)}{\PDFI\sigma(x)}\varphi(x)dx=\int_{\partial D}\varphi(z)g(z)\sigma(dz),
\end{align}
for all $\varphi \in C(\overline D)$. It is easy to see that if the boundary limit \eqref{eq:boundary Dirichlet limit} exists and it is finite (or infinite everywhere), then the weak $L^1$ boundary condition \eqref{eq:distri solution boundary} also holds with the same limiting function.
However, in Section \ref{s:regularity} the boundary condition in \eqref{eq:generic problem} is left out which means that we only demand that $\Lo u=f$ holds in the distributional sense.

Finally, we say that $u\in L^1_{loc}(D)$ is a weak solution to \eqref{eq:generic problem} if for every $\xi\in C_c^\infty(D)$ it holds that
		\begin{align}\label{eq:problem - integral definition}
			\int_D u(x)\xi(x)dx=\int_D\GDfi\xi(x)f(x)dx-\int_{\partial D}\frac{\partial}{\partial \mathbf{n}}\GDfi\xi(z)g(z)\sigma(dz).
		\end{align}
Equivalently, by writing $\psi=\GDFI\xi$, the relation \eqref{eq:problem - integral definition} says
\begin{align}\label{eq:problem - integral definition1}
			\int_D u(x)\Lo \psi(x)dx=\int_D\psi(x) f(x)dx-\int_{\partial D}\frac{\partial}{\partial \mathbf{n}}\psi(z)g(z)\sigma(dz).
\end{align}
We refer to \cite{Bio23} for a detailed study of weak solutions and their relation to pointwise and distributional solutions.

The following lemma explains why a solution to \eqref{eq:problem_intro}, the so-called large solution, is the one that is not uniformly bounded by a nonnegative harmonic function.
\begin{lem}\label{l:large solution harmonic unbounded}    
    If $u:D\to \R$ satisfies 
    \begin{align}\label{eq:large solution harmonic0}
        \lim_{D\ni x\to z}\frac{|u(x)|}{\PDFI\sigma(x)}=\infty,\quad z\in\partial D,
    \end{align}
    then $u$ is not uniformly bounded in $D$ by any nonnegative harmonic function with respect to $\Lo$.
\end{lem}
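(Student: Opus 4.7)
The plan is to argue by contradiction. Suppose $|u|\le h$ in $D$ for some nonnegative $\Lo$-harmonic function $h$. Then \eqref{eq:large solution harmonic0} forces
\[
\lim_{D\ni x\to z}\frac{h(x)}{\PDFI\sigma(x)}=\infty\qquad \text{for every } z\in \partial D,
\]
and the goal is to show this is impossible. Introduce the open sub-level sets
\[
A_n\coloneqq \{x\in D:h(x)<n\,\PDFI\sigma(x)\},\qquad n\in\N.
\]
The first step would be a compactness argument on $\partial D$: at each $z\in\partial D$ the blow-up above yields a neighborhood of $z$ on which $h\ge n\,\PDFI\sigma$, and finitely many such neighborhoods cover $\partial D$ (since $D$ is bounded). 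This produces $\varepsilon_n>0$ with $A_n\subset\{x\in D:\de(x)\ge \varepsilon_n\}\subsub D$.

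Next, I would use that $\PDFI\sigma$ is $\Lo$-harmonic in $D$, being the Poisson integral of the boundary measure $\sigma$ (cf.\ \eqref{eq:PDFI_sigma} and the surrounding discussion), so that $g_n\coloneqq h-n\,\PDFI\sigma$ is $\Lo$-harmonic as well. Applying the probabilistic mean value identity for the process $X$ separately to $h$ and to $\PDFI\sigma$ on $A_n$, each yielding a finite value at $x\in A_n$, and then subtracting, gives
\[
g_n(x)=\ex_x\bigl[g_n(X_{\tau_{A_n}})\bigr],\qquad x\in A_n.
\]
Since $X_t=W^D_{S_t}$ only jumps within $D$ (the underlying Brownian motion is killed at $\partial D$ prior to subordination), the exit location $X_{\tau_{A_n}}$ takes values in $(D\setminus A_n)\cup\{\partial\}$, where $g_n\ge 0$ with the convention $g_n(\partial)=0$. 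Hence $g_n(x)\ge 0$ on $A_n$, contradicting the very definition of $A_n$ unless $A_n=\emptyset$.

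Running this for every $n\in \N$ then yields $h(x)\ge n\,\PDFI\sigma(x)$ for every $x\in D$, and since $\PDFI\sigma(x)>0$ by \eqref{eq:PDFI_sigma}, this forces $h\equiv +\infty$ in $D$, contradicting that $h$ is a real-valued function. The most delicate point will be the rigorous justification of the mean value identity for $g_n$ on $A_n$ when $\PDFI\sigma$ is unbounded near $\partial D$; this will be handled by applying harmonicity to $h$ and to $\PDFI\sigma$ individually (each giving a finite value at $x\in A_n$) and only then subtracting, so that no cancellation of infinities is required.
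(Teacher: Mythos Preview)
Your argument is correct and takes a genuinely different route from the paper's proof.

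The paper argues via the Martin representation of nonnegative $\Lo$-harmonic functions: it invokes \cite[Theorem 2.23]{Bio23} to write $h=\PDFI\zeta$ for a finite measure $\zeta$ on $\partial D$, observes (by compactness of $\partial D$) that $\PDFI\zeta/\PDFI\sigma\to\infty$ \emph{uniformly} near $\partial D$, and then derives a contradiction from the weak $L^1$ estimate
\[
\limsup_{t\to 0}\frac1t\int_{\{\de<t\}}\frac{\PDFI\zeta(x)}{\PDFI\sigma(x)}\,dx\le c\,\zeta(\partial D)<\infty,
\]
which follows from the proof of \cite[Proposition 3.5]{Bio23}. In contrast, you bypass the representation theorem entirely and run a direct maximum-principle argument: using only that $h$ and $\PDFI\sigma$ satisfy the probabilistic mean-value identity for $X$ on sets $U\subsub D$ (and continuity of harmonic functions, so that $A_n$ is open), you force $h\ge n\,\PDFI\sigma$ for every $n$ and hence $h\equiv+\infty$.

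Your approach is more elementary in that it does not need the Martin boundary representation nor the quantitative boundary-layer integral estimate; it needs only the mean-value property and the fact that the subordinate killed process cannot exit through $\partial D$ (so $X_{\tau_{A_n}}\in (D\setminus A_n)\cup\{\partial\}$). The paper's approach, on the other hand, exploits the specific potential-theoretic structure already developed in \cite{Bio23} and yields the contradiction through a finiteness statement that is of independent interest. One small point worth making explicit in your write-up: the compactness step gives $\overline{A_n}\subset\{\de\ge\varepsilon_n\}$, hence $A_n\subsub D$, which is exactly what licenses applying the mean-value identity on $A_n$; and continuity of $h$ (needed for $A_n$ to be open) follows from the strong Feller property of $(Q_t^D)_t$ or, equivalently, from interior regularity of $\Lo$-harmonic functions.
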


\begin{proof}
    Since $\partial D$ is compact, by the standard $3\varepsilon$ argument, we easily obtain uniformness of the convergence in \eqref{eq:large solution harmonic0}.
    
    Suppose that the claim of the lemma does not hold, i.e. there exists $h$, a nonnegative harmonic function with respect to $\Lo$, such that $|u(x)|\le h(x)$, $x\in D$. By the representation of nonnegative harmonic functions, see \cite[Theorem 2.23]{Bio23}, there exists a finite measure $\zeta$ on $\partial D$ such that $h(x)=\PDFI\zeta(x)$, $x\in D$. Further, we have
    \begin{align}\label{eq:large solution harmonic1}
        \lim_{D\ni x\to z}\frac{\PDFI\zeta(x)}{\PDFI\sigma(x)}=\lim_{D\ni x\to z}\frac{\PDFI\zeta(x)}{|u(x)|}\frac{|u(x)|}{\PDFI\sigma(x)}=\infty,\quad z\in \partial D,
    \end{align}
    and the convergence is uniform by the same argument as in the beginning of the proof.

    However, by the proof of \cite[Proposition 3.5]{Bio23}, we get that
    \begin{align*}
        \limsup_{t\to 0}\frac{1}{t}\int_{\{\de(x)<t\}}\frac{\PDFI\zeta(x)}{\PDFI\sigma(x)}dx&=\limsup_{t\to 0}\int_{\partial D}\left(\frac{1}{t}\int_{\{\de(x)<t\}}\frac{\PDFI(x,z)}{\PDFI\sigma(x)}dx\right)\zeta(dz)\\
        &\le c\int_{\partial D} \zeta(dz)=c\,\zeta(\partial D)<\infty,
    \end{align*}
    which is a contradiction with \eqref{eq:large solution harmonic1} which says that $\frac{\PDFI\zeta(x)}{\PDFI\sigma(x)}$ is arbitrarily and uniformly large near the boundary.
\end{proof}

To conclude this subsection, we would like to make a few remarks. When dealing with distributional and weak solutions, it is possible to consider Radon measures $\mu$ and $\zeta$ instead of the functions $f$ and $g$ as the boundary data, provided that they have the necessary integrability properties. Further, every weak solution is also a distributional one with the weak $L^1$ boundary condition, as proved in \cite[Proposition 4.6]{Bio23}. However, it is important to keep in mind that a weak solution may not have a pointwise boundary limit in the sense of \eqref{eq:distri solution boundary}. On the other hand, if a distributional or weak solution $u$ satisfies \eqref{eq:boundary Dirichlet limit} and has sufficient regularity, it is also a pointwise solution, see Remark \ref{r:pointwise solution}.

\subsection{Renewal function}
We end the section with a short introduction of the renewal function $V$. For details, see \cite[Chapter VI]{Ber}. The renewal function $V$ plays a prominent role in studying boundary behaviour in $C^{1,1}$ domains. We refer the interested reader to \cite{ksv_twosided,KMR_suprema,semilinear_cvw,semilinear_bvw}.

Let $Z=(Z_t)_{t\ge 0}$ be a one-dimensional subordinate Brownian motion with the characteristic exponent $\phi(\theta^2)$, $\theta\in \R$. E.g. $Z$ can be thought of as one of the components of the process $Y$. Let $M_t:=\sup_{0\le s\le t} Z_s$ be the supremum process of $Z$, and let $L=(L_t)_{t\ge 0}$ be the local time of $M_t-Z_t$ at zero. The inverse local time $L_t^{-1}:=\inf\{s>0: L_s>t\}$ is called the ascending ladder time process of $Z$. Define the ascending ladder height process $H=(H_t)_{t\ge 0}$ of $Z$ by $H_t:=M_{L_t^{-1}}=Z_{L_t^{-1}}$ if $L_t^{-1}<\infty$, and $H_t=\infty$ otherwise. We define the renewal function $V$ of the process $H$ by
\begin{equation}\label{eq:V}
    V(t):=\int_0^{\infty}\p(H_s\le t)\, ds, \quad t\in \R.
\end{equation}
It holds that  $V$ is strictly increasing, $V(t)=0$ for $t<0$, $V(0)=0$, and $V(\infty)=\infty$. The most notable property of the renewal function $V$ is that $V_{|(0,\infty)}$ is harmonic with respect to the killed process $Z^{(0,\infty)}$, which was first used in \cite{ksv_twosided} to obtain the precise decay rate of harmonic functions of a $d$-dimensional subordinate Brownian motion. 

In the case of the isotropic $\alpha$-stable process, we have $V(t)=t^{\alpha/2}$, but in general, the function $V$ is not known explicitly. However, under \ref{WSC} it follows from \cite[Lemma 2.4, Lemma 2.5]{KKLL}, cf. \cite{ksv_twosided}, that $V\in C^2$ and that for every $R>0$:
\begin{align}
V(t) &\asymp \Phi(t):=\phi(t^{ -2})^{-1/2}\, ,\quad 0<t\leq R,\label{e:V-phi}\\
V'(t)&\lesssim\frac{V(t)}{t}\, ,\quad 0<t\leq R,\label{eq:H1}\\
|V''(t)|&\lesssim\frac{V'(t)}{t}\, ,\quad 0<t\leq R.\label{eq:H2}
\end{align}

\section{Regularity of distributional solutions}\label{s:regularity}
In this section, we prove two higher regularity theorems for distributional solutions to linear non-local problems associated with operators $\LoR$ and $\Lo$. Analogs of these results were already established in the fractional setting in \cite{Silvestre2007regularity} and \cite{AbaDupaNonhomo2017}. Our generalisations of these results follow the approach of the fractional setting, with necessary modifications due to the lack of exact scaling. The results of this section are self-contained and fill in some minor gaps in the original proofs. As we already pointed out in the introduction, the choice to work with classical H\"older spaces, instead of generalised H\"older spaces corresponding to $\phi$ as in \cite{BaeKassmann} and \cite{KimLee}, was made in order to obtain regularity results for all ranges of Matuszewska indices in \eqref{eq:simple global scaling}. These results are essential in the construction of the large solution by an approximating sequence in Lemma \ref{l:large finite}.  

To simplify notation and to make our results more readable, we write $C^{k+\alpha}$ instead of $C^{k,\alpha}$ for $k\in \N_0$ and $\alpha\in(0,1]$. Or, more generally, for $\beta\in (0,\infty)$, $C^\beta$ means $C^{\lfloor \beta \rfloor, \beta -\lfloor \beta \rfloor}$. However, in cases where more clarity is needed, we stick with the notation of $C^{k,\alpha}$-type.

Further, when dealing with distributional solutions which are defined almost everywhere, we say that a solution has a certain regularity if there exists a modification (with respect to the Lebesgue measure) of this function with the stated regularity.

\begin{thm}\label{t:regularity SBM}
	Let $f\in C^{k+\alpha}(\R^d)$ for $k\in \N_0$ and $\alpha\in (0,1)$ so that $k+\alpha+2\delta_1\notin \N$, and let $u$ be a distributional solution to
	\begin{align}\label{eq:distr.linear solution}
		\LoR u=f,\quad \text{in $\R^d$}.
	\end{align} 
Then $u\in C^{k+\alpha+2\delta_1}(\R^d)$ and there exists a constant $C=C(d,k,\alpha,\phi)>0$ such that
    \begin{align}\label{eq:regularity SBM no1}
		||u||_{C^{k+\alpha+2\delta_1}(\R^d)}&\le C\,||f||_{C^{k+\alpha}(\R^d)}.
	\end{align}
	On the other hand, if $f\in L^\infty(\R^d)$, then $u\in C^{\beta}(\R^d)$ for all $\beta\in(0,2\delta_1)$ and there exists a constant $C=C(d,\phi)>0$ such that
	\begin{align}\label{eq:regularity SBM no2}
		||u||_{C^{\beta}(\R^d)}&\le C\,||f||_{L^\infty(\R^d)}.
	\end{align}
\end{thm}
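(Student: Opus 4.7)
The plan is to follow Silvestre's approach from \cite[Proposition 2.8]{Silvestre2007regularity}, suitably adapted to the subordinate Brownian motion setting by using the Green function $G^\phi_{\R^d}$ as the fundamental solution of $\LoR$. The key point is that, since $d\ge 3$, the process $Y$ is transient and $G^\phi_{\R^d}$ is finite with $\LoR G^\phi_{\R^d}=\delta_0$ in the distributional sense; this is the exact place where the dimensional restriction enters, as highlighted in Remarks \ref{r:Gf indefinite no1} and \ref{r:Gf indefinite no2}.

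First I would localize. Fix $x_0\in\R^d$, choose a cutoff $\eta\in C_c^\infty(\R^d)$ equal to $1$ on a large ball around $x_0$, and split $f=f_1+f_2$ with $f_1=\eta f$ and $f_2=(1-\eta)f$. Correspondingly, set $w_1=G^\phi_{\R^d}\ast f_1$, which is well-defined by compact support of $f_1$ and the integrability of $G^\phi_{\R^d}$ at infinity (transience), and satisfies $\LoR w_1=f_1$. Since $f_2$ vanishes in a neighborhood of $x_0$ and $G^\phi_{\R^d}$ is smooth away from the origin, $w_2=G^\phi_{\R^d}\ast f_2$ can be differentiated under the integral arbitrarily many times near $x_0$. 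The difference $u-(w_1+w_2)$ is then $\LoR$-harmonic on $\R^d$, and a Liouville-type argument (or, equivalently, working modulo a constant in the distributional equation) lets us reduce the regularity question for $u$ to one for $w_1$.

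Next I would establish the bound \eqref{eq:regularity SBM no2}. Using $G^\phi_{\R^d}(x)\asymp 1/(|x|^d\phi(|x|^{-2}))$ for $|x|\le M$, together with the lower scaling $\phi(\lambda)\gtrsim \lambda^{\delta_1}$ for $\lambda\ge 1$, the Green function behaves like a Riesz potential of order $2\delta_1$ near the origin, up to a slowly varying correction. Estimating
\[
|w_1(x)-w_1(y)|\le \|f\|_{L^\infty}\int_{\R^d} |G^\phi_{\R^d}(x-z)-G^\phi_{\R^d}(y-z)|\,dz
\]
by splitting the integration into $|x-z|\le 2|x-y|$ (direct bounds using the singularity estimate) and $|x-z|>2|x-y|$ (mean value theorem together with gradient estimates on $G^\phi_{\R^d}$ inherited from \cite[Lemma 3.2]{KSV_global_UBHP} and the scaling) yields, up to a logarithmic loss at the borderline, $w_1\in C^\beta$ for every $\beta<2\delta_1$ with norm $\lesssim\|f\|_{L^\infty}$. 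The exclusion $\beta<2\delta_1$ (rather than $\beta=2\delta_1$) is what prevents the log loss from spoiling the estimate.

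Finally I would bootstrap to prove \eqref{eq:regularity SBM no1}. For $k=0$ and $f\in C^\alpha$, I would apply the previous estimate to finite differences of $f_1$: incremental quotients $\tau_h f_1/|h|^\alpha$ are uniformly bounded in $L^\infty$, hence the corresponding incremental quotients of $w_1=G^\phi_{\R^d}\ast f_1$ lie in $C^\beta$ for any $\beta<2\delta_1$, which upgrades $w_1$ to $C^{\alpha+2\delta_1}$ (provided $\alpha+2\delta_1\notin\N$). For $k\ge 1$, transferring derivatives onto $f_1$ through the convolution, $\partial^\gamma w_1=G^\phi_{\R^d}\ast\partial^\gamma f_1$, one applies the $k=0$ statement to $\partial^\gamma f_1\in C^\alpha$ and iterates. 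The main obstacles I anticipate are twofold: justifying the Liouville/representation reduction, i.e.\ that $u-w_1-w_2$ — which a priori is only a tempered-type distribution — is negligible for the $C^\beta$ regularity claim; and performing the convolution estimates sharply enough to capture the full range of Matuszewska indices $0<\delta_1\le\delta_2<1$ (as opposed to the restricted range typical in generalised-Hölder treatments), since the non-exact scaling of $\phi$ forces careful tracking of slowly varying factors throughout every step.
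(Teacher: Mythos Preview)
Your localization step has a genuine gap: the potential $w_2=G^\phi_{\R^d}\ast f_2$ is in general not well defined, since $f_2=(1-\eta)f$ is merely bounded at infinity while $G^\phi_{\R^d}\notin L^1(\R^d)$; transience only gives local integrability of the Green function, not global. Consequently the global Liouville reduction you propose cannot be set up as stated. The paper avoids this entirely by a local argument: with $u_0\coloneqq G^\phi_{\R^d}(\eta f)$ one has $\LoR(u-u_0)=(1-\eta)f=0$ in $B(0,1)$, so $u-u_0$ is $\LoR$-harmonic only on $B(0,1)$, and interior smoothness estimates for $\LoR$-harmonic functions (from \cite{grzywny_potential_kernels}) give $\|u-u_0\|_{C^{k+\alpha+2\delta_1}(B(0,1/2))}\lesssim \|u\|_{L^\infty}+\|f\|_{L^\infty}$. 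No global harmonic piece, no Liouville theorem, no $w_2$.

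More substantially, your route to the regularity of $u_0$ (direct potential/convolution estimates on $G^\phi_{\R^d}$, then incremental-quotient bootstrap) is not the paper's. The paper instead exploits the conjugate Bernstein function $\phi^*(\lambda)=\lambda/\phi(\lambda)$ and the factorization of Lemma~\ref{l: GDf(GDf*)=GD}: setting $v_0\coloneqq G^{\phi^*}_{\R^d}u_0$ one gets $v_0=G_{\R^d}(\eta f)$, so classical elliptic regularity yields $v_0\in C^{k+2+\alpha}$. Since $\LozR v_0=u_0$ and $\phi^*$ satisfies \ref{WSC} with upper index $1-\delta_1$, the auxiliary Lemmas~\ref{l:reg. 0,alpha}--\ref{l:reg. 2,alpha} (which quantify how $\LozR$ lowers H\"older regularity by at most $2(1-\delta_1)$) give $u_0\in C^{k+\alpha+2\delta_1}$ in one stroke. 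This conjugate trick is what makes the argument work uniformly across all Matuszewska indices $0<\delta_1\le\delta_2<1$, precisely the obstacle you flag at the end; your direct estimates on $G^\phi_{\R^d}$ would have to track the slowly varying correction through every splitting and through the bootstrap, and it is not clear they yield the sharp gain $2\delta_1$ rather than something weaker.
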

In essence, the previous theorem says that, given some regularity on $f$, the H\"older regularity of a solution to \eqref{eq:distr.linear solution} increases by (at least) $2\delta_1$. In the fractional Laplacian case (where $\delta_1=s$), this result has been proven in \cite[Section 2.1]{Silvestre2007regularity}. 

In order to prove Theorem \ref{t:regularity SBM}, we show a few auxiliary results. The first lemma is a generalisation of \cite[Proposition 2.5]{Silvestre2007regularity}.

\begin{lem}\label{l:reg. 0,alpha}
	Let $\alpha\in(0,1]$ and suppose $\alpha>2\delta_2$. If $u\in C^{k,\alpha}(\R^d)$, for $k\in \N_0$, then $\LoR u\in C^{k,\alpha-2\delta_2}(\R^d)$, and it holds that
    \begin{align}\label{eq:reg. 0,alpha}
        \|\LoR u\|_{C^{k,\alpha-2\delta_2}(\R^d)}\le c(d,k,\alpha,\phi)\|u\|_{C^{k,\alpha}(\R^d)}. 
    \end{align}
\end{lem}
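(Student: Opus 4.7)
The plan is a direct singular-integral estimate in two stages: first the $k=0$ case via a near/far splitting keyed to the condition $\alpha>2\delta_2$, and then the general $k$ case by commuting derivatives through $\LoR$.

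For $k=0$, the key observation is that $\alpha>2\delta_2$ combined with the sharp bound $j(r)\lesssim r^{-d-2\delta_2}$ on $(0,1]$ (from \eqref{eq:sharp bnd jump kern} together with \eqref{eq:scaling condition}) makes $|u(x+h)-u(x)|\,j(|h|)$ absolutely integrable near the origin for $u\in C^{0,\alpha}$, so the principal value is superfluous and I may write
\begin{equation*}
\LoR u(x)=\int_{\R^d}(u(x)-u(x+h))\,j(|h|)\,dh.
\end{equation*}
Splitting into $\{|h|\le 1\}$ (bounded by $[u]_\alpha|h|^\alpha$, yielding $\int_0^1 r^{\alpha-2\delta_2-1}\,dr<\infty$) and $\{|h|>1\}$ (bounded by $2\|u\|_\infty$ against the finite mass $\int_{|h|>1}j(|h|)\,dh$) gives $\|\LoR u\|_\infty\lesssim\|u\|_{C^{0,\alpha}}$.

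For the $(\alpha-2\delta_2)$-Hölder seminorm, fix $x,y$, set $\delta=|x-y|$, and reduce to the case $\delta\le 1$ using the previous $L^\infty$ bound. Write
\begin{equation*}
\LoR u(x)-\LoR u(y)=\int_{\R^d}\bigl[(u(x)-u(x+h))-(u(y)-u(y+h))\bigr]\,j(|h|)\,dh,
\end{equation*}
and split into three regions. On $\{|h|\le\delta\}$, use the grouping $|u(x)-u(x+h)|+|u(y)-u(y+h)|\lesssim[u]_\alpha|h|^\alpha$ and integrate $\int_0^\delta r^{\alpha-2\delta_2-1}\,dr\asymp\delta^{\alpha-2\delta_2}$. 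On $\{\delta<|h|\le 1\}$, regroup as $|u(x)-u(y)|+|u(x+h)-u(y+h)|\lesssim[u]_\alpha\delta^\alpha$ and integrate $\int_\delta^1 r^{-2\delta_2-1}\,dr\asymp\delta^{-2\delta_2}$, again producing $\delta^{\alpha-2\delta_2}$. On $\{|h|>1\}$, use the same regrouping with the integrability of $j$ at infinity to pick up only $\delta^\alpha\le\delta^{\alpha-2\delta_2}$. Summing the three pieces yields $|\LoR u(x)-\LoR u(y)|\lesssim\|u\|_{C^{0,\alpha}}\,\delta^{\alpha-2\delta_2}$.

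For general $k\in\N$, I would pass derivatives through $\LoR$ by exploiting translation invariance of the kernel $j(|h|)$. Since $D^\beta u\in C^{0,\alpha}(\R^d)$ for every $|\beta|\le k$ with the same exponent $\alpha>2\delta_2$, the integrand $(D^\beta u(x)-D^\beta u(x+h))\,j(|h|)$ is dominated as in Step 1 uniformly in $x$, so differentiation under the integral gives $D^\beta\LoR u=\LoR D^\beta u$. (Alternatively, justify the commutation by mollification: $u_\epsilon:=u*\rho_\epsilon$ is smooth, Fubini yields $\LoR u_\epsilon=(\LoR u)*\rho_\epsilon$, and the smooth identity $D^\beta\LoR u_\epsilon=\LoR D^\beta u_\epsilon$ passes to the limit via the $k=0$ bound applied to $u-u_\epsilon$.) Then applying the already established $k=0$ estimate to each $D^\beta u\in C^{0,\alpha}(\R^d)$ for $|\beta|=k$, together with the $L^\infty$ bounds for $|\beta|<k$, delivers \eqref{eq:reg. 0,alpha}. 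The only real obstacle is the careful three-region bookkeeping in Step 2, where the exponent $\alpha-2\delta_2$ must appear cleanly in each piece; everything else is routine.
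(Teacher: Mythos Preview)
Your proof is correct and follows essentially the same route as the paper: the same near/far splitting with $j(r)\lesssim r^{-d-2\delta_2}$ for the $L^\infty$ bound, the same three-region decomposition $\{|h|\le\delta\}$, $\{\delta<|h|\le 1\}$, $\{|h|>1\}$ with the choice $r=\delta=|x-y|$ for the H\"older seminorm, and the same commutation of derivatives with $\LoR$ for general $k$. The only cosmetic difference is that the paper presents the split as two pieces and then further subdivides $B(0,r)^c$, and justifies the commutation via a direct difference-quotient dominated-convergence argument rather than your differentiation-under-the-integral or mollification route.
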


\begin{proof}
	Take $k=0$. First, we prove that $\LoR u$ is well defined. Note that, by using \eqref{eq:sharp bnd jump kern} and \ref{WSC}, we have 
    \begin{align}\label{eq:jump kernel scaled}
        j(r)\lesssim r^{-d-2\delta_2}, \quad r\le1,    
    \end{align}
    and the constant of comparability depends only on $d$ and $\phi$.
    Since $u\in C^{0,\alpha}(\R^d)$ and  $\alpha>2\delta_2$ we have for every $x\in\R^d$
    \begin{align*}
        &\left|\int_{\R^d}(u(x+h)-u(x))j(|h|)dh \right|\\
        \qquad&\le \int_{\R^d)}|u(x+h)-u(x)|\left(1_{B(0,1)}+1_{B(0,1)^c}\right)j(|h|)dh\\
        \qquad&\lesssim \int_{B(0,1)}[u]_{0,\alpha,\R^d}|h|^{\alpha}\frac{1}{|h|^{d+2\delta_2}}dh+2\|u\|_{L^\infty(\R^d)}\int_{B(0,1)^c}j(|h|)dh\\
        \qquad&\le c_1 \|u\|_{C^{0,\alpha}(\R^d)}<\infty,
    \end{align*}
    for some $c_1=c_1(d,\alpha,\phi)>0$, i.e. $\LoR u$ is well defined and 
    \begin{align}\label{eq:LoR bounded}
        \|\LoR u\|_{L^\infty(\R^d)}\le  c_1\|u\|_{C^{0,\alpha}(\R^d)}.    
    \end{align}
To show the desired regularity of $\LoR$ take $x_1,x_2\in\R^d$ and $r<1$ and note that
    \begin{align*}
        &\hspace{-1em}|\LoR u(x_1)-\LoR u(x_2)|
        \\&\le \int_{B(0,r)}(|u(x_1+h)-u(x_1)|+|u(x_2+h)-u(x_2)|)j(|h|)dh\\
        &+\int_{B(0,r)^c}(|u(x_1+h)-u(x_2+h)|+|u(x_2)-u(x_1)|)j(|h|)dh\\
        &\eqqcolon I_1+I_2.
    \end{align*}
    Since $|u(x)-u(y)|\le [u]_{0,\alpha,\R^d}|x-y|^\alpha$, $x,y\in\R^d$, by applying \eqref{eq:jump kernel scaled} we get that
    \begin{align}\label{eq:regularity I1 bound}
        I_1\lesssim \int_{B(0,r)}[u]_{0,\alpha,\R^d}|h|^{\alpha}\frac{1}{|h|^{d+2\delta_2}}dh\le c_2 [u]_{0,\alpha,\R^d}r^{\alpha-2\delta_2},
    \end{align}
    for some $c_2=c_2(d,\alpha,\phi)>0$. Similarly, for some  $c_3=c_3(d,\alpha,\phi)>0$ we have
    \begin{align}
        I_2&\le2 \int_{B(0,r)^c}[u]_{0,\alpha,\R^d}|x_1-x_2|^\alpha j(|h|)dh\nonumber\\
        &=2\int_{B(0,r)^c} [u]_{0,\alpha,\R^d}|x_1-x_2|^\alpha (1_{B(0,1)\setminus B(0,r)}+1_{B(0,1)^c})j(|h|)dh\nonumber\\
        &\lesssim  [u]_{0,\alpha,\R^d}|x_1-x_2|^\alpha \int_{B(0,1)\setminus B(0,r)}|h|^{-d-2\delta_2}dh\nonumber\\&
        \hspace{6em}
        + [u]_{0,\alpha,\R^d}|x_1-x_2|^\alpha\int_{B(0,1)^c}j(|h|)dh\nonumber\\
        &\le c_3[u]_{0,\alpha,\R^d}\big(|x_1-x_2|^\alpha r^{-2\delta_2}+|x_1-x_2|^\alpha\big).\label{eq:regularity I2 bound}
    \end{align}

    If $|x_1-x_2|\le 1$, then $|x_1-x_2|^\alpha\le |x_1-x_2|^{\alpha-2\delta_2}$, so by choosing $r=|x_1-x_2|$ in \eqref{eq:regularity I1 bound} and \eqref{eq:regularity I2 bound} we obtain
    \begin{align*}
        \frac{|\LoR u(x_1)-\LoR u(x_2)|}{|x_1-x_2|^{\alpha-2\delta_2}}\le (c_2+c_3) [u]_{0,\alpha,\R^d}.
    \end{align*}
   If $|x_1-x_2|> 1$, then by \eqref{eq:LoR bounded} 
    \begin{align*}
        \frac{|\LoR u(x_1)-\LoR u(x_2)|}{|x_1-x_2|^{\alpha-2\delta_2}}\le  2c_1\|u\|_{C^\alpha(\R^d)},
    \end{align*}
 which, together with the previous inequality, implies \eqref{eq:reg. 0,alpha} for $k=0$.

For $k\in \N$ first note that the derivative and the operator $\LoR$ commute, i.e.
    \begin{align*}
        \partial_{x_i}\LoR u(x)&=\lim_{t\to 0}\int_{\R^d}\frac 1 t (u(x+te_i+h)-u(x+te_i)-u(x+h)+u(x))j(|h|)dh\\
        &= \LoR (\partial_{x_i}u)(x).
    \end{align*}
 Indeed, similarly as before
we note that $|\nabla u(x)-\nabla u(y)|\lesssim |x-y|^\alpha$, $x,y\in\R^d$, the result follows by the dominated convergence theorem. Now the desired result for $k\in \N$ follows by the iteration of this argument and the result for $k=0$. 
\end{proof}

Together with the previous lemma, the following result generalises \cite[Proposition 2.6]{Silvestre2007regularity}. However, here we slightly deviate from the ideas in the proof in \cite{Silvestre2007regularity}, as we apply a more direct approach without using the Riesz transform.

\begin{lem}\label{l:reg. 1,alpha}
	Let $\alpha\in(0,1]$ and suppose $1+\alpha>2\delta_2>\alpha$. If $u\in C^{k,\alpha}(\R^d)$, for $k\in \N$, then $\LoR u\in C^{k-1,1+\alpha-2\delta_2}$, and it holds that
    \begin{align}\label{eq:reg. 1,alpha}
        \|\LoR u\|_{C^{k-1,1+\alpha-2\delta_2}(\R^d)}\le c(d,k,\alpha,\phi)\|u\|_{C^{k,\alpha}(\R^d)}. 
    \end{align}
\end{lem}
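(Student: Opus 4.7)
The plan is to handle the base case $k=1$ by direct integral estimates and then to iterate for $k\geq 2$ by commuting $\LoR$ with spatial derivatives. The key device throughout is the symmetric principal value representation
\[
\LoR u(x) = -\frac{1}{2}\int_{\R^d}v(x,h)\, j(|h|)\, dh, \qquad v(x,h):=u(x+h)+u(x-h)-2u(x),
\]
where the second-difference cancellation $|v(x,h)|\lesssim [u]_{1,\alpha,\R^d}|h|^{1+\alpha}$ pairs with the bound \eqref{eq:jump kernel scaled} and the hypothesis $1+\alpha>2\delta_2$ to make the integrand near the origin absolutely integrable. Combined with an $L^\infty$-bound away from the origin, this gives $\|\LoR u\|_\infty\lesssim \|u\|_{C^{1,\alpha}(\R^d)}$, analogously to the first step in the proof of Lemma \ref{l:reg. 0,alpha}.

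For the H\"older seminorm in the case $k=1$, I would fix $x_1,x_2$ with $r:=|x_1-x_2|\leq 1$ and split the integral giving $|\LoR u(x_1)-\LoR u(x_2)|$ at the scales $|h|\leq r$, $r<|h|\leq 1$, and $|h|>1$. On the inner ball, the cancellation applied separately to $v(x_1,h)$ and $v(x_2,h)$, combined with \eqref{eq:jump kernel scaled}, yields a contribution of order $[u]_{1,\alpha,\R^d}\,r^{1+\alpha-2\delta_2}$. On the middle range, viewing $v(\cdot,h)$ as a function on $\R^d$ whose gradient $\nabla u(x+h)+\nabla u(x-h)-2\nabla u(x)$ is bounded by $c[u]_{1,\alpha,\R^d}|h|^\alpha$, the mean-value inequality gives $|v(x_1,h)-v(x_2,h)|\lesssim [u]_{1,\alpha,\R^d}|h|^\alpha\, r$, and integration produces
\[
[u]_{1,\alpha,\R^d}\, r\int_r^1 t^{\alpha-1-2\delta_2}\, dt\;\lesssim\; [u]_{1,\alpha,\R^d}\, r^{1+\alpha-2\delta_2}.
\]
On the exterior, the Lipschitz estimate $|v(x_1,h)-v(x_2,h)|\leq 4\|\nabla u\|_\infty\, r$ together with the integrability of $j$ on $B(0,1)^c$ gives a contribution of order $r$, which is absorbed into $r^{1+\alpha-2\delta_2}$ since $1+\alpha-2\delta_2\leq 1$. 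The case $r>1$ follows at once from the $L^\infty$-bound above.

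For $k\geq 2$ the plan is to establish the commutation $\partial_{x_i}\LoR u=\LoR \partial_{x_i} u$ and reduce to the base case. Since $\partial_{x_i}u\in C^{k-1,\alpha}(\R^d)\subset C^{1,\alpha}(\R^d)$, the difference quotients of $v(\cdot,h)$ in the $i$-th coordinate are uniformly dominated by $c\|u\|_{C^{2,\alpha}(\R^d)}(|h|^{1+\alpha}\wedge 1)$, which is $j$-integrable; dominated convergence applied to the symmetric representation then yields the commutation. Iterating $k-1$ times and applying the base case to the $(k-1)$-th order partial derivatives of $u$ gives $\LoR u\in C^{k-1,1+\alpha-2\delta_2}(\R^d)$ together with the stated norm bound.

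The principal obstacle, compared with Lemma \ref{l:reg. 0,alpha}, is the middle-range integral. The direct second-difference cancellation $|v(x_i,h)|\lesssim |h|^{1+\alpha}$ alone fails to be integrable against $j$ there once $\alpha<2\delta_2$, and the remedy is to trade one power of $|h|$ for one power of $r$ by using the gradient estimate on $v(\cdot,h)$ in the base point. This exchange drives the final exponent $1+\alpha-2\delta_2$ and relies crucially on the strict inequality $2\delta_2>\alpha$, which makes the residual integral $\int_r^1 t^{\alpha-1-2\delta_2}\, dt$ divergent at zero with exactly the right rate $r^{\alpha-2\delta_2}$.
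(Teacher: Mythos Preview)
Your proof is correct and follows the same overall architecture as the paper's: the three-scale split at $r=|x_1-x_2|$ for the base case $k=1$, the choice $r=|x_1-x_2|$ to balance the pieces, and the commutation $\partial_{x_i}\LoR u=\LoR\partial_{x_i}u$ to iterate for $k\ge 2$.

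The execution differs in a useful way. The paper works with the one-sided representation $\int(u(x+h)-u(x)-\nabla u(x)\cdot h\,\1_{B(0,1)}(h))j(|h|)\,dh$; on the annulus $B(0,1)\setminus B(0,r)$ it then has to expand $u(x_1+h)-u(x_2+h)$ by Taylor at $x_2+h$, track four separate remainder terms, and invoke the radial symmetry of $j$ to kill the stray gradient integral $\int_{B_2}(\nabla u(x_1)-\nabla u(x_2))\cdot h\,j(|h|)\,dh$. Your symmetric second-difference form $v(x,h)=u(x+h)+u(x-h)-2u(x)$ builds in that cancellation automatically, and your middle-range estimate via the mean-value bound $|\nabla_x v(x,h)|\lesssim[u]_{1,\alpha}|h|^\alpha$ is a single clean step in place of the paper's four-term decomposition. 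The price is that the symmetric form requires $j$ to be even, which is available here but would not be in a non-symmetric setting; the paper's argument only uses symmetry on the annulus and so is marginally more portable. Both routes yield the same exponents and constants depending only on $(d,k,\alpha,\phi)$.
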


\begin{proof}
	We follow the approach from the previous lemma. First, take $k=1$ and note that
 \begin{align}\label{eq:LoR bounded no2}
     \| \LoR u\|_{L^\infty(\R^d)}\le c_1(d,\alpha,\phi) \| u\|_{C^{1,\alpha}(\R^d)}.
 \end{align}
 Indeed, since $j$ is symmetric, by applying \eqref{eq:jump kernel scaled} we get that
 \begin{align*}
        |\LoR u(x)|
        &\le \int_{\R^d}|u(x+h)-u(x)-\nabla u(x)\cdot h \,\1_{B(0,1)}(h)|j(h)dh \\ 
        &\lesssim \int_{B(0,1)}[u]_{1,\alpha,\R^d}|h|^{1+\alpha}\frac{1}{|h|^{d+2\delta_2}}dh+\|u\|_{L^\infty(\R^d)}\int_{B(0,1)^c}j(|h|)dh\\
        &\le c_1(d,\alpha,\phi)\,\|u\|_{C^{1,\alpha}(\R^d)}<\infty.
\end{align*}
Next, for $x_1,x_2\in\R^d$ we estimate the difference
 \begin{align*}
        &|\LoR u(x_1)-\LoR u(x_2)|\le I_1+I_2+I_3,
    \end{align*}
\[
        I_i=\left|\int_{B_i}\left(\sum_{j=1}^2 (-1)^j {\big(}u(x_j+h)-u(x_j)-\nabla u(x_j)\cdot h\,\1_{B(0,1)}(h){\big)}\right)j(|h|)dh\right|,
\]
where $r<1$ and $B_1=B(0,r)$, $B_2=B(0,1)\setminus B(0,r)$, $B_3=B(0,1)^c$. By applying the estimate $|u(x+h)-u(x)-\nabla u(x)\cdot h|\le [u]_{1,\alpha,\R^d}|h|^{1+\alpha}$ and  \eqref{eq:jump kernel scaled}, we have
\begin{align}\label{eq:reg. 1,alpha no1}
    I_1\lesssim [u]_{1,\alpha,\R^d}\int_0^r h^{\alpha-2\delta_2}dh=c_2(d,\alpha,\phi)[u]_{1,\alpha,\R^d}r^{1+\alpha-2\delta_2}.
\end{align}
For $I_2$ we have, by Taylor's expansion,
\begin{align}\label{eq:Taylors expansion Holder 1+alpha}
    u(x)=u(y)+\nabla u(y)\cdot({x-y})+H^{1,\alpha}_{x,y}({x-y}),
\end{align}
where the remainder satisfies $|H^{1,\alpha}_{x,y}({x-y})|\le [u]_{1,\alpha,\R^d}|{x-y}|^{1+\alpha}$. Hence, by using \eqref{eq:Taylors expansion Holder 1+alpha}, \eqref{eq:jump kernel scaled} and symmetry of $j$ in $B_2$ (which implies that the third term is equal to zero) we have
\begin{align}
    I_2&\le\int_{B_2}\big|u(x_1+h)-u(x_2+h)-\nabla u(x_2+h)\cdot ({x_1-x_2})|j(|h|)dh\nonumber\\
    &+|u(x_1)-u(x_2)-\nabla u(x_2)\cdot ({x_1-x_2})|\int_{B_2}j(|h|)dh\nonumber\\
    &+\left|\int_{B_2}(\nabla u(x_1)-\nabla u(x_2))\cdot hj(|h|)dh\right|\nonumber\\
    &+\big|\int_{B_2}(\nabla u(x_2+h)-\nabla u(x_2))\cdot ({x_1-x_2})j(|h|)dh\big|\nonumber\\
    &\lesssim [u]_{1,\alpha,\R^d}|{x_1-x_2}|^{1+\alpha}\int_r^1\frac{dh}{h^{1+2\delta_2}}+[u]_{1,\alpha,\R^d}|{x_1-x_2}|\int_r^1\frac{dh}{h^{1+2\delta_2-\alpha}}\nonumber\\
    &\le c_3(d,\alpha,\phi)[u]_{1,\alpha,\R^d}\big(|{x_1-x_2}|^{1+\alpha}r^{-2\delta_2}+|{x_1-x_2}|r^{\alpha-2\delta_2}\big).\label{eq:reg. 1,alpha no2}
\end{align}
Analogously, since $|u(x_1)-u(x_2)|\vee |u(x_1+h)-u(x_2+h)|\le \|u\|_{C^{1,\alpha}(\R^d)}|x_1-x_2|$ we have
\begin{align}
\begin{split}\label{eq:reg. 1,alpha no3}
    I_3&\le 2\|u\|_{C^{1,\alpha}(\R^d)}|x_1-x_2|\int_{B(0,1)^c}j(|h|)dh\\&\le c_4(d,\alpha,\phi)\|u\|_{C^{1,\alpha}(\R^d)}|x_1-x_2|.
\end{split}
\end{align}

If $|x_1-x_2|\le 1$, take $r=|x_1-x_2|$ in \eqref{eq:reg. 1,alpha no1} and \eqref{eq:reg. 1,alpha no2}, and in \eqref{eq:reg. 1,alpha no3} note that $1+\alpha-2\delta_2\in(0,1)$ to obtain
\begin{align}\label{eq:reg. 1,alpha no4}
    |\LoR u(x_1)-\LoR u(x_2)|\le c_5(d,\alpha,\phi)\|u\|_{C^{1,\alpha}(\R^d)}|x_1-x_2|^{1+\alpha-2\delta_2}.
\end{align}
If $|x_1-x_2|\ge 1$, then by \eqref{eq:LoR bounded no2} we trivially have
\begin{align}\label{eq:reg. 1,alpha no5}
|\LoR u(x_1)-\LoR u(x_2)|\le  2c_1\|u\|_{C^{1,\alpha}(\R^d)}|x_1-x_2|^{1+\alpha-2\delta_2}.
    \end{align}
Hence, the case $k=1$ follows by \eqref{eq:LoR bounded no2}, \eqref{eq:reg. 1,alpha no4}, and \eqref{eq:reg. 1,alpha no5}. For $k\ge 2$, in the same way as in the previous lemma, we note that the derivative and the operator $\LoR$ commute.
\end{proof}

The last auxiliary result, which seems to be missing in \cite{Silvestre2007regularity}, covers the third case of the range of exponents, i.e. the case when $\delta_2$ is close to 1.
\begin{lem}\label{l:reg. 2,alpha}If $u\in C^{k,\alpha}(\R^d)$, for $k\in \N$ and $k\ge2$, and $1+\alpha<2\delta_2$, then $\LoR u\in C^{k-2,2+\alpha-2\delta_2}(\R^d)$, and it holds that
    \begin{align}\label{eq:reg. 2,alpha}
        \|\LoR u\|_{C^{k-2,2+\alpha-2\delta_2}(\R^d)}\le c(d,k,\alpha,\phi)\|u\|_{C^{k,\alpha}(\R^d)}. 
    \end{align}
\end{lem}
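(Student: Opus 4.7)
The plan is to follow the template of Lemmas \ref{l:reg. 0,alpha} and \ref{l:reg. 1,alpha}: establish the base case $k=2$ by a direct splitting argument, then lift to $k \ge 3$ by commuting partial derivatives with $\LoR$. The new ingredient is that under $1+\alpha < 2\delta_2$, first-order information on $u$ is no longer enough to control the singularity $j(|h|) \lesssim |h|^{-d-2\delta_2}$ near the origin; one is forced to exploit the full $C^{2,\alpha}$ regularity via a second-order Taylor expansion of $u$ at each base point.

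For the base case $k=2$, boundedness of $\LoR u$ is immediate: Taylor gives $|u(x+h) - u(x) - \nabla u(x) \cdot h| \le \tfrac{1}{2}\|D^2 u\|_{L^\infty(\R^d)} |h|^2$ on $B(0,1)$, and $\int_{B(0,1)} |h|^2 j(|h|)\, dh \lesssim \int_0^1 s^{1-2\delta_2}\, ds < \infty$ since $\delta_2 < 1$; the tail is handled as in Lemmas \ref{l:reg. 0,alpha} and \ref{l:reg. 1,alpha}. For the H\"older estimate, I would fix $x_1,x_2 \in \R^d$ with $r := |x_1-x_2| \le 1$, write
\[
\Phi(x,h) := u(x+h) - u(x) - \nabla u(x)\cdot h = \tfrac12 D^2u(x)h\cdot h + R(x,h), \quad |R(x,h)|\le c[u]_{2,\alpha,\R^d}|h|^{2+\alpha},
\]
and split $|\LoR u(x_1) - \LoR u(x_2)| \le I_1 + I_2 + I_3$ on $B(0,r)$, $B(0,1) \setminus B(0,r)$, and $B(0,1)^c$ exactly as in the proof of Lemma \ref{l:reg. 1,alpha}.

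For $I_1$, the quadratic part $\tfrac12(D^2u(x_1) - D^2u(x_2))h\cdot h$ is controlled via $|D^2u(x_1) - D^2u(x_2)| \le [u]_{2,\alpha,\R^d}|x_1-x_2|^\alpha$, yielding $I_1 \lesssim [u]_{2,\alpha,\R^d}\bigl(|x_1-x_2|^\alpha r^{2-2\delta_2} + r^{2+\alpha-2\delta_2}\bigr)$. For $I_2$, the mean value theorem applied to $x\mapsto\Phi(x,h)$ combined with $|\nabla u(y+h) - \nabla u(y) - D^2u(y)h|\le c[u]_{2,\alpha,\R^d}|h|^{1+\alpha}$ gives $|\Phi(x_1,h)-\Phi(x_2,h)|\le c[u]_{2,\alpha,\R^d}|x_1-x_2||h|^{1+\alpha}$; since $\alpha-2\delta_2<-1$ the integral $\int_r^1 s^{\alpha-2\delta_2}\,ds$ is dominated by its lower endpoint $\sim r^{1+\alpha-2\delta_2}$, so $I_2\lesssim [u]_{2,\alpha,\R^d}|x_1-x_2|\, r^{1+\alpha-2\delta_2}$. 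Finally $I_3\lesssim \|u\|_{C^{2,\alpha}(\R^d)}|x_1-x_2|$, which is absorbed into $|x_1-x_2|^{2+\alpha-2\delta_2}$ for $|x_1-x_2|\le 1$ since $2+\alpha-2\delta_2<1$. Choosing $r=|x_1-x_2|$ turns every term into $\|u\|_{C^{2,\alpha}(\R^d)}|x_1-x_2|^{2+\alpha-2\delta_2}$; the case $|x_1-x_2|>1$ reduces to the $L^\infty$ bound.

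For $k\ge 3$, I would exploit the symmetric representation
\[
\LoR u(x) = -\tfrac{1}{2}\int_{\R^d} \bigl(u(x+h)+u(x-h)-2u(x)\bigr)\, j(|h|)\, dh,
\]
obtained by symmetrising $j$ in \eqref{eq:LoR defn}, and differentiate under the integral. With $u\in C^{3,\alpha}(\R^d)$ the second-difference bound $|\partial_{x_i}u(y+h)+\partial_{x_i}u(y-h)-2\partial_{x_i}u(y)|\le c\bigl(\|D^3u\|_{L^\infty(\R^d)}|h|^2+[u]_{3,\alpha,\R^d}|h|^{2+\alpha}\bigr)$ supplies a dominating function (integrable against $j$ on $B(0,1)$, using $\delta_2<1$ and $2\delta_2<2+\alpha$), which yields $\partial_{x_i}\LoR u = \LoR\partial_{x_i}u$; iterating this commutation and applying the base case to $\partial^\beta u\in C^{2,\alpha}(\R^d)$ for $|\beta|=k-2$ delivers $\LoR u\in C^{k-2,2+\alpha-2\delta_2}(\R^d)$. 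The main obstacle is the apparent divergence in $I_2$: the integrand $|h|^{1+\alpha}j(|h|)$ is not integrable near zero under our scaling hypothesis, but this divergence, together with the $|x_1-x_2|$ factor from the mean value theorem, supplies precisely the excess needed to produce the sharp H\"older exponent $2+\alpha-2\delta_2$.
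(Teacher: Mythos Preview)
Your argument is correct and follows the same three-annulus architecture as the paper. The one genuine difference is in the treatment of $I_2$. The paper expands $u(x_j+h)-u(x_j)-\nabla u(x_j)\cdot h$ around the translated base points $x_2+h$ and $x_2$ to second order, which after regrouping produces four terms with the bound
\[
I_2\lesssim [u]_{2,\alpha,\R^d}\bigl(|x_1-x_2|^{2+\alpha}r^{-2\delta_2}+|x_1-x_2|^{2}r^{\alpha-2\delta_2}+|x_1-x_2|^{1+\alpha}r^{1-2\delta_2}+|x_1-x_2|\,r^{1+\alpha-2\delta_2}\bigr),
\]
all of which collapse to $|x_1-x_2|^{2+\alpha-2\delta_2}$ at $r=|x_1-x_2|$. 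You instead apply the mean value theorem to $x\mapsto\Phi(x,h)$ and observe that $\nabla_x\Phi(x,h)=\nabla u(x+h)-\nabla u(x)-D^2u(x)h$ is the $C^{1,\alpha}$ Taylor remainder for $\nabla u$, hence $|\Phi(x_1,h)-\Phi(x_2,h)|\lesssim [u]_{2,\alpha,\R^d}|x_1-x_2|\,|h|^{1+\alpha}$; this yields the single term $|x_1-x_2|\,r^{1+\alpha-2\delta_2}$ directly. Your route is more economical and avoids the lengthy algebraic regrouping, at the cost of being slightly less explicit about where each power of $|x_1-x_2|$ and $r$ originates. For the step $k\ge 3$, both arguments amount to the same commutation $\partial_{x_i}\LoR u=\LoR\partial_{x_i}u$; your use of the symmetrised second-difference representation is a cosmetic variant of the paper's reference back to the dominated-convergence argument of the earlier lemmas.
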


\begin{proof}
Similarly as in the previous lemma, for $u\in C^{2,\alpha}(\R^d)$ by Taylor's expansion we have
    \begin{align*}
        u(x+h)=u(x)+\nabla u(x)\cdot h+\frac12 Hu(x)(h,h)+H^{2,\alpha}_{x,x+h}(h), 
    \end{align*}
    where the remainder satisfies $|H^{2,\alpha}_{x,x+h}(h)|\le [u]_{2,\alpha,\R^d}|h|^{2+\alpha}$. Hence, 
    \begin{align}\label{eq:LoR bounded no3}
        \| \LoR u\|_{L^\infty(\R^d)}\le c_1(d,\alpha,\phi) \| u\|_{C^{2,\alpha}(\R^d)}.
     \end{align}
     To estimate the difference $|\LoR u(x_1)-\LoR u(x_2)|$ for $x_1,x_2\in\R^d$, we break $\LoR$ into the three integrals $I_1$, $I_2$, and $I_3$ as in the proof of Lemma \ref{l:reg. 1,alpha}.
Since
     \begin{align*}
         &|u(x_1+h)-u(x_1)-\nabla u(x_1)\cdot h-u(x_2+h)+u(x_2)+\nabla u(x_2)\cdot h|\\
         &\le\sum_{i=1,2}\left|u(x_i+h)-u(x_i)-\nabla u(x_i)\cdot h-\frac12 Hu(x_i)(h,h)\right|\\
         &+\frac 1 2\left|Hu(x_1)(h,h)-Hu(x_2)(h,h)\right|\\
         &\le 2[u]_{2,\alpha,\R^d}|h|^{2+\alpha}+\frac12 [u]_{2,\alpha,\R^d}|x_1-x_2|^\alpha |h|^2,
     \end{align*}
     it follows that
     \begin{align}\label{eq:reg. 2,alpha no1}
         I_1\le c_2(d,\alpha,\phi)[u]_{2,\alpha,\R^d}\big(r^{2+\alpha-2\delta_2}+|x_1-x_2|^\alpha r^{2-2\delta_2}\big).
     \end{align}
Analogously, the bound
 \begin{align}\label{eq:reg. 2,alpha no2}
         I_2&\le c_3(d,\alpha,\phi)[u]_{2,\alpha,\R^d}\big(|x_2-x_1|^{2+\alpha} r^{-2\delta_2}+|x_2-x_1|^2 r^{\alpha-2\delta_2}\\
         &\hspace{12em}+|x_2-x_1|^{1+\alpha} r^{1-2\delta_2}+|x_2-x_1|r^{1+\alpha-2\delta_2}\big)
     \end{align}
     follows by noting that
     \begin{align*}
         &|u(x_1+h)-u(x_1)-\nabla u(x_1)\cdot h-u(x_2+h)+u(x_2)+\nabla u(x_2)\cdot h|\\
         &\le\left|\big(u(x_1+h)-u(x_2+h)-\nabla u(x_2+h)\cdot ({x_1-x_2})-\frac12 Hu(x_2+h)({x_1-x_2},{x_1-x_2})\big)\right.\\
         &\hspace{2em}-\big(u(x_1)-u(x_2)-\nabla u(x_2)\cdot ({x_1-x_2})-\frac12 Hu(x_2)({x_1-x_2},{x_1-x_2})\big)\\
         &\hspace{4em}+\left(\frac12 Hu(x_2+h)({x_1-x_2},{x_1-x_2})-\frac12 Hu(x_2)({x_1-x_2},{x_1-x_2})\right)\\
         &\hspace{6em}+\big(\nabla u(x_2+h)\cdot ({x_1-x_2})-\nabla u(x_2)\cdot ({x_1-x_2})-Hu(x_2)({x_1-x_2},h)\big)\\
         &\hspace{8em}\left.-\big(\nabla u(x_1)\cdot h-\nabla u(x_2)\cdot h-Hu(x_2)({x_1-x_2},h)\big)\right|\\
         &\le (2|x_2-x_1|^{2+\alpha}+|x_2-x_1|^{2}|h|^\alpha+|x_2-x_1||h|^{1+\alpha}+|x_2-x_1|^{1+\alpha}|h|)[u]_{2,\alpha,\R^d}.
     \end{align*}     
     The bound for $I_3$ and the concluding remarks follow by the same argument as in the proof of Lemma \ref{l:reg. 1,alpha}.
\end{proof}

\begin{proof}[Proof of Theorem \ref{t:regularity SBM}]
    Let $f\in C^{k+\alpha}(\R^d)$ and let $u$ solve \eqref{eq:distr.linear solution}. We investigate the global regularity of $u$ by obtaining estimates of $u$ and its derivatives in balls $B(x,1)$, $x\in \R^d$, with the constants independent of $x$. Therefore, without loss of generality, one can take $x=0$.

    Similarly as in \cite{Silvestre2007regularity}, pick a nonnegative function $\eta\in C_c^\infty(\R^d)$ such that $0\le \eta \le 1$, $\eta\equiv 1$ in $B(0,1)$ and $\eta\equiv 0$ in $B(0,2)^c$, and define
    \begin{align*}
        u_0\coloneqq G^{\phi}_{\R^d}\big(\eta f\big).
    \end{align*}
    Obviously, $\eta f\in C_c^{k+\alpha}(\R^d)$. Also, it is easy to see that $u_0\in L^\infty(\R^d)$ and that we have
    \begin{align}\label{eq:u0 Linf norm}
        \|u_0\|_{L^\infty(\R^d)} \le  \|f\|_{L^\infty(\R^d)} \|G^{\phi}_{\R^d}\big(\1_{B(0,2)}\big)\|_{L^\infty(\R^d)}\le c(d,\phi)\|f\|_{L^\infty(\R^d)}.
    \end{align}
    We further have that $u-u_0$ is harmonic in $B(0,1)$ with respect to $\LoR$, since 
    \[
    \LoR(u-u_0)=(1-\eta)f=0, \qquad \text{in } B(0,1),
    \] 
see e.g. \cite[Proposition 2.5]{Bio22}. By \cite[Theorem 1.7]{grzywny_potential_kernels} we have that  $u-u_0$ is smooth in $B(0,1)$ and there exists $c_1=c_1(d,k,\alpha,\phi)>0$ such that
    \begin{align}\begin{split}\label{eq:reg. SBM bnd1}
    \|u-u_0\|_{C^{k+\alpha+2\delta_1}(B(0,1/2))}&\le c_1\|u-u_0\|_{L^\infty(\R^d)}\\\overset{\eqref{eq:u0 Linf norm}}&{\le}  c_1\big(\|u\|_{L^\infty(\R^d)}+\|f\|_{L^\infty(\R^d)}\big).
    \end{split}
    \end{align}

    In order to estimate the $C^{k+\alpha+2\delta_1}$ norm of $u_0$ define $v_0\coloneqq G^{\phi^*}_{\R^d}u_0$, where $\phi^*$ is the conjugate of $\phi$ defined in \eqref{eq:phi_conjugate}. By Lemma \ref{l: GDf(GDf*)=GD} we have that $v_0=G_{\R^d}(\eta f)$ so the regularity of solutions to the classical Poisson equation implies $v_0\in C^{k+2+\alpha}(\R^d)$. Since $\LozR v_0=u_0$, and since $\phi^*$ satisfies \ref{WSC} with the exponents $1-\delta_2$ and $1-\delta_1$, see \eqref{eq:scaling condition phi*}, by either Lemma \ref{l:reg. 0,alpha}, \ref{l:reg. 1,alpha} or \ref{l:reg. 2,alpha}, we have $u_0\in C^{k+\alpha+2\delta_1}(\R^d)$ and
    \begin{align*}
        \|u_0\|_{C^{k+\alpha+2\delta_1}(\R^d)}\le c_2(d,k,\alpha,\phi)\|v_0\|_{C^{k+2+\alpha}(\R^d)}\le c_3(d,k,\alpha,\phi)\|f\|_{C^{k+\alpha}(\R^d)}.
    \end{align*}
The case when $f\in L^\infty(\R^d)$ follows analogously, using the fact that $v_0\in C^{1,\gamma}(\R^d)$ for every $\gamma\in(0,1)$ and hence $u_0\in C^\beta(\R^d)$ for every $\beta \in (0,2\delta_1)$. 
\end{proof}

\begin{rem}\label{r:Gf indefinite no1}
The assumption $d\ge3$ from the beginning of this article is essential in the proof of Theorem \ref{t:regularity SBM} to conclude that the function $v_0=G_{\R^d}(\eta f)$ is well defined and finite. Indeed, if $d\le 2$ and $f\ge c1_B$ for some ball $B\subset B(0,1)$ and $c>0$, then by the calculations in \cite[Propsition 14.2]{bernstein} we have
    \begin{align*}
        v_0(x)=G^{\phi^*}_{\R^d}u_0(x)=\int_0^\infty P_t(\eta f)(x)dt=\ex_x\left[\int_0^\infty \eta(W_t)f(W_t)dt\right]\equiv \infty,
    \end{align*}
    where the last equality comes from the recurrence of the Brownian motion in $d\le 2$, see e.g. \cite[Theorem 3.27]{morters_peres_BM}. Therefore, the presented approach, used as well in the proof of \cite[Proposition 2.8]{Silvestre2007regularity}, cannot be applied in the case of $d\le 2$.
\end{rem}

Now we prove a regularity theorem for $\Lo$ by applying the same approach as in \cite[Lemma 21]{AbaDupaNonhomo2017}. As the calculations are highly non-trivial, we present the complete proof for clarity, to fill in some gaps, as well as to fix a minor error at the end of the proof of \cite[Lemma 21]{AbaDupaNonhomo2017}.

\begin{thm}\label{t:regularity SKBM}
    Let $\alpha\in(0,1)$ and $k\in  \N_0$ such that $k+\alpha+2\delta_1\notin\N$, and let $f\in C^{k+\alpha}(D)$. If $u\in \LLL$ solves 
    \begin{align}\label{eq:sprectral_linear}
        \Lo u = f \quad\text{in $D$}
    \end{align}
    in the distributional sense, then $u\in C^{k+\alpha+2\delta_1}(D)$. Furthermore, for any given sets $K\subsub K'\subsub D$, there exists a constant $C=C(d,\alpha,K,K',D,\phi)>0$ such that
    \begin{align}\label{eq:t:spectral smooth1}
	|| u ||_{C^{k+\alpha+2\delta_1}(K)} \le C \left(|| f||_{C^{k+\alpha}(K')}+|| u ||_{\LLL}\right).
    \end{align}
    Moreover, if $f\in L^\infty_{loc}(D)$ and $\beta\in(0,2\delta_1)$, then
    \begin{align}\label{eq:t:spectral smooth2}
	|| u ||_{C^{\beta}(K)} \le C \left(|| f ||_{L^\infty(K')}+|| u ||_{\LLL}\right).
    \end{align}
    In particular, if $f=0$, i.e. $u$ is $\Lo$-harmonic, then $u\in C^\infty(D)$ and the equality $\Lo u(x)=0$ holds at every point $x\in D$.
\end{thm}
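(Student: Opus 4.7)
The plan is to reduce the local regularity for $\Lo$ on $D$ to the whole-space regularity for $\LoR$ established in Theorem \ref{t:regularity SBM}, mirroring the strategy of the proof of that theorem --- decompose $u$ into a Green-potential piece plus an $\Lo$-harmonic remainder, handle the potential piece via the Newtonian-type identity from Lemma \ref{l: GDf(GDf*)=GD}, and handle the remainder by cutting off and exporting to $\R^d$.

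Fix $K\subsub K'\subsub D$ and pick $\eta\in C_c^\infty(D)$ with $\eta\equiv 1$ on an open neighbourhood of $\overline{K'}$ and $\supp\eta\subsub D$. Write $u = u_0 + h$ with $u_0 := \GDfi(\eta f)$ and $h := u - u_0$, so that $\Lo u_0 = \eta f$ and $\Lo h = (1-\eta)f \equiv 0$ on $K'$ in the distributional sense. For $u_0$, set $v_0 := \GDfz u_0$ and invoke Lemma \ref{l: GDf(GDf*)=GD} together with Fubini to identify $v_0 = \GD(\eta f)$, the classical Dirichlet Green potential on $D$ of the compactly supported datum $\eta f \in C_c^{k+\alpha}(D)$. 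Standard Schauder theory for $-\left.\Delta\right\vert_D$ yields $v_0\in C^{k+2+\alpha}(D)$ with $\|v_0\|_{C^{k+2+\alpha}(D)}\lesssim \|f\|_{C^{k+\alpha}(K')}$. Since $\Loz v_0 = u_0$ and $\phi^*$ satisfies \ref{WSC} with exponents $1-\delta_2$, $1-\delta_1$ (see \eqref{eq:scaling condition phi*}), an adaptation of Lemmas \ref{l:reg. 0,alpha}--\ref{l:reg. 2,alpha} to the spectral operator on $D$ --- controlling the singular integrals of $v_0$ against the bounded-away-from-diagonal kernel of $\Loz$ on compact subsets via \eqref{eq:J_D estimate} and \eqref{eq:heat kernel estimate} --- gives $u_0\in C^{k+\alpha+2\delta_1}(K)$ with the required bound in terms of $\|f\|_{C^{k+\alpha}(K')}$.

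For the $\Lo$-harmonic remainder $h\in \LLL$, the goal is $h\in C^{k+\alpha+2\delta_1}(K)$. Localize further: pick $\chi\in C_c^\infty(K')$ with $\chi\equiv 1$ on $K$, set $w := \chi h$ extended by zero to $\R^d$, and compute $\wLoR w$ distributionally. Using the pointwise formulas \eqref{eq:Lo pointwisely} and \eqref{eq:LoR defn} together with $\Lo h = 0$ on $K'$, one arrives, for $x\in K$, at an identity of the form $\LoR w(x) = Th(x)$, where $T$ is built from three non-singular ingredients on $K$: the difference kernel $j(|x-y|)-J_D(x,y)$, the cutoff-tail kernel $(1-\chi(y))j(|x-y|)$, and the correction $u(x)\bigl[\int_{D^c}j(|x-y|)dy-\kappa(x)\bigr]$. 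The probabilistic identities
\[
j(|x-y|) - J_D(x,y)=\int_0^\infty \ex_x\!\left[p(t-\tau_D, W_{\tau_D}, y);\tau_D<t\right]\mu(t)\,dt,\qquad \kappa(x)=\int_0^\infty \p_x(\tau_D\le t)\mu(t)\,dt,
\]
combined with the heat-kernel estimate \eqref{eq:heat kernel estimate}, show that each of these kernels is $C^\infty$ in $x\in K$ and behaves like $\de(y)$ as $y\to\partial D$. Consequently $Th\in C^\infty(K)$ with all derivatives bounded by $\|h\|_{\LLL}$. After extending $w$ and the right-hand side in a routine way outside $K'$ (where $w$ and $\LoR w$ are already smooth since $w$ has compact support in $D$), one obtains $\LoR w = g$ on $\R^d$ with $g\in C^{k+\alpha}(\R^d)$, and Theorem \ref{t:regularity SBM} gives $w\in C^{k+\alpha+2\delta_1}(\R^d)$, hence $h = w\in C^{k+\alpha+2\delta_1}(K)$.

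Combining the bounds for $u_0$ and $h$ yields \eqref{eq:t:spectral smooth1}; the $L^\infty$ statement \eqref{eq:t:spectral smooth2} follows by the same scheme using the $L^\infty$ part of Theorem \ref{t:regularity SBM} together with $\GD(\eta f)\in C^{1,\gamma}(D)$ for every $\gamma<1$ when $\eta f\in L^\infty$. Finally, for $f\equiv 0$, bootstrapping in $k$ and $\alpha$ starting from $u\in \LLL$ gives $u\in C^\infty(D)$, and the pointwise identity $\Lo u(x)=0$ at every $x\in D$ then follows from Remark \ref{r:pointwise solution}. \emph{The main technical obstacle} is verifying the $\de(y)$-decay of the kernels appearing in $T$, which is what matches the weight in the $\LLL$-norm on the right-hand side of \eqref{eq:t:spectral smooth1}; the adaptation of Lemmas \ref{l:reg. 0,alpha}--\ref{l:reg. 2,alpha} to the spectral operator on $D$ in handling the potential piece $u_0$ is likewise delicate and, as in the $\R^d$ case, is where the dimensional hypothesis $d\ge 3$ enters --- see Remark \ref{r:Gf indefinite no1}.
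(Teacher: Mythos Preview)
Your overall plan differs from the paper's. You split $u=u_0+h$ with $u_0=\GDfi(\eta f)$ and $h$ $\Lo$-harmonic on $K'$, and then treat each piece separately. The paper instead derives the Bochner-type representation $u=\int_0^\infty(v-P_t^Dv)\,\nu(t)\,dt$ with $v:=\GDfz u$ (which satisfies $-\Delta v=f$ in $D$), builds the whole-space comparison object $\overline u:=G^\phi_{\R^d}\overline f=\int_0^\infty(\overline v-P_t\overline v)\,\nu(t)\,dt$ (where $\overline v=G_{\R^d}\overline f$ and $\overline u\in C^{k+\alpha+2\delta_1}$ by Theorem \ref{t:regularity SBM}), and estimates $u-\overline u$ directly via heat-kernel bounds. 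That last step yields $C^{2+\alpha}(K)$ control using only $u\in\LLL$.

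Your $u_0$ part is reasonable in spirit, but the ``adaptation of Lemmas \ref{l:reg. 0,alpha}--\ref{l:reg. 2,alpha} to the spectral operator on $D$'' is not automatic: those lemmas rely on translation invariance (the operator commutes with derivatives), which $\Loz$ does not enjoy. What actually works is to extend $v_0$ to $\tilde v_0\in C_c^{k+2+\alpha}(\R^d)$, write $\Loz v_0(x)=\LozR\tilde v_0(x)+(\text{smooth corrections from }J_D^*-j^*\text{ and }\kappa^*)$ for $x\in K$, and then invoke the $\R^d$ lemmas; you do not carry this out.

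The harmonic piece $h$ is where your proposal has a genuine gap. Setting $w=\chi h$, you claim a pointwise identity $\LoR w(x)=Th(x)$ on $K$ and then apply Theorem \ref{t:regularity SBM}. Three problems arise. (i) The pointwise identity uses $\Lo h(x)=0$ in the form \eqref{eq:Lo pointwisely}, which needs $h\in C^{2\delta_2+\varepsilon}$ near $x$ --- exactly the regularity you are trying to prove; you start ``distributionally'' but immediately switch to pointwise formulas. (ii) Even read distributionally, the identity you sketch is only on $\{\chi=1\}$; on the transition set $\supp\chi\setminus\{\chi=1\}$ you give no identification of $\wLoR w$, and ``extending $w$ and the right-hand side in a routine way'' does not yield an equation $\LoR w=g$ on $\R^d$: extending the right-hand side does not change the left. (iii) Theorem \ref{t:regularity SBM}, as proved here (see \eqref{eq:reg. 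SBM bnd1}), controls the harmonic remainder by $\|u\|_{L^\infty(\R^d)}$; applying it to $w$ therefore needs $h\in L^\infty_{loc}(D)$, but a priori you only have $h\in\LLL$.

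The paper's approach circumvents all three issues because the heat semigroup in the Bochner representation is smoothing: once $v\in\LLL$ and $v\in C^{2+\alpha}(D)$ (from $-\Delta v=f$), the difference $u-\overline u=I_1+I_2$ is shown to be $C^{2+\alpha}(K)$ directly from heat-kernel decay \eqref{eq:heat kernel estimate} and \eqref{eq:part deriv of remainder trans kernel}, with no circular use of regularity of $u$ and with the $\LLL$-norm appearing naturally via the $\de(y)$ factor in \eqref{eq:heat kernel estimate}. This is the missing idea in your treatment of $h$.
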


\begin{proof}
    The main idea of the proof is to compare a distributional solution $u\in \LLL$ to \eqref{eq:sprectral_linear} to a specific solution to
    $\LoR \overline u=\overline f$ in $\R^d$ for a suitable modification $\overline f$ of the extension of $f$ to $\R^d$.

Let $f\in C^\alpha(D)$ and take $K\subsub K'\subsub D$. By \cite[Corollary 2.9]{Bio23} we have that $v\coloneqq \GDfz u$ satisfies $v\in \LLL$ and 
    \begin{align}\label{eq:v in LLL}
        ||v||_{\LLL}\le c_1(d,D,\phi)||u||_{\LLL}.
    \end{align}
    Furthermore, it holds that $-\Delta v=f$ in the distributional sense in $D$. Indeed, for $\psi\in C_c^\infty(D)$, by Fubini's theorem we get
    \begin{align*}
	\int_Dv(x)(-\Delta)\psi(x)dx&=\int_D\GDfz u(x)(-\Delta)\psi(x)dx=\int_Du(x)\GDfz\big((-\Delta)\psi\big)(x)dx\\
	&=\int_D u(x)\Lo \psi(x)dx,
    \end{align*}
	where the last equality follows from \cite[Lemma 2.7 \& Proposition 2.2]{Bio23}. By the classical elliptic regularity, this means that $v\in C^{2,\alpha}(D)$, see \cite[Section 2.2]{RosOtonFernRegularBook}.
	
	Next, we show that  
 \begin{equation}\label{eq:u_by_v}
     u=\int_0^\infty\big(v-P_t^Dv\big)\nu(t)dt.
 \end{equation}
 Recall that for $\psi \in C_c^\infty(D)$ by \cite[Eq. (2.30)]{Bio23} we have
	\begin{align*}
		\Loz \psi(x)=\int_0^\infty \big(\psi(x)-P_t^D\psi(x)\big)\nu(t)dt,
	\end{align*}
	since $C_c^\infty \subset \DD(\left.\Delta\right\vert_{D})$.
	Hence, 
	\begin{align}
		\int_D u(x)\psi(x)dx&=\int_D v(x)\Loz\psi(x)\nonumber\\
		&=\int_Dv(x)\left(\int_0^\infty\big(\psi(x)-P_t^D\psi(x)\big)\nu(t)dt\right)dx.\label{eq:integral change eq1}
	\end{align}
Note that we can change the order of integration in \eqref{eq:integral change eq1}, since $v\in \LLL$ and 
 \begin{equation}\label{eq:fubini_condition}
     \int_0^\infty\big|\psi(x)-P_t^D\psi(x)\big|\nu(t)dt\lesssim \delta_D(x),\ \ x\in D.
 \end{equation}
 Indeed, by \cite[Eq. (2.26)]{Bio23} we have that $\psi=\sum_{j=1}^\infty \wt \psi_j \varphi_j$ where  $|\wt \psi_j|\le c_2(\psi,m,D)\lambda_j^{-m}$, for any $m\in \N$. Hence, by \cite[Eq. (A.35) and the display after]{Bio23} and \eqref{eq:Pt=e{-lt}}, we get that  
	\begin{align*}
		|\psi-P_t^D\psi|&=\big|\sum_{j=1}^\infty \wt \psi_j \varphi_j-\sum_{j=1}^\infty \wt \psi_j P_t^D\varphi_j\big|\le\sum_{j=1}^\infty |\wt \psi_j| |\varphi_j|(1-e^{-\lambda_j t})\\
		&\le c_2(\psi,m,D)\de(x)\sum_{j=1}^\infty \lambda_j^{ -m+d/4+1} (1-e^{-\lambda_j t}).
	\end{align*}

Now \eqref{eq:fubini_condition}	follows by choosing $m$ large enough and noting that by \eqref{eq:Weyl's law} we have that 
\[
\int_0^\infty(1-e^{-\lambda_j t})\nu(t)dt=\phi^*(\lambda_j)\le \lambda_j\asymp j^{2/d}.
\]Applying Fubini's theorem to \eqref{eq:integral change eq1} in the first line, and symmetry of $P_t^D$ in the second, we get that
	\begin{align}
		\int_D u(x)\psi(x)dx&=\int_0^\infty\left(\int_Dv(x)\big(\psi(x)-P_t^D\psi(x)\big)dx\right)\nu(t)dt\nonumber\\
		&=\int_0^\infty\left(\int_D\psi(x)\big(v(x)-P_t^Dv(x)\big)dx\right)\nu(t)dt.\label{eq:integral change eq2}
	\end{align}
	Here $P_t^Dv(x)$ is well defined because of \eqref{eq:heat kernel estimate} and the fact that $v\in \LLL$. Now we want to change the order of integration in \eqref{eq:integral change eq2}. To do so, let $\eta\in C_c^\infty(D)$ such that $0\le \eta \le 1$, $\eta\equiv 1$ in $\{x:\dist(x,\supp \psi)<\dist(\supp \psi, \partial D)/2\}$, and write
	\begin{align*}
		\psi(x)\big(v(x)-P_t^Dv(x)\big)&=\psi(x)\big(\eta(x)v(x)-P_t^D[\eta \,v](x)\big)\\&\hspace{6em}{-}\psi(x)P_t^D[(1-\eta)v](x).
	\end{align*}
	This way $\eta v \in C^{2+\alpha}_c(D)$, hence 
	\begin{align}\label{eq:integrab change eq3}
		||\eta v-P_t^D[\eta \,v]||_{L^\infty(D)}\le\min\{t ||\Delta(\eta v)||_{L^\infty(D)}, 2\,||\eta\, v||_{L^\infty(D)}\}\lesssim 1\wedge t,
	\end{align}
	see e.g. \cite[Eq. (13.3)]{bernstein}. Further, for $x\in \supp \psi$ by \eqref{eq:heat kernel estimate} we have
	\begin{align}\label{eq:integrab change eq4}
		|\psi(x)P_t^D[(1-\eta)v](x)|\lesssim \int_{D} \frac{\de(y)}{t^{d/2+1}}e^{-\frac{c_3}{t}} |v(y)| dy\lesssim 1\wedge t,
	\end{align}
	where $c_3=c_3(\dist(\supp\psi,\partial D),D)>0$ and the constant of comparability is independent of $x\in \supp \psi$. Since $\int_0^\infty(1\wedge t)\nu(t)dt<\infty$, by \eqref{eq:integrab change eq3} and \eqref{eq:integrab change eq4}, we can change the order of integration in \eqref{eq:integral change eq2} to get
	\begin{align*}
			\int_D u(x)\psi(x)dx=\int_D\psi(x)\left(\int_0^\infty\big(v(x)-P_t^Dv(x)\big)\nu(t)dt\right)dx.
	\end{align*}
	Hence \eqref{eq:u_by_v} holds a.e.~in $D$. Moreover, the right-hand side in \eqref{eq:u_by_v} is continuous in $D$ since we can apply the dominated convergence theorem due to \eqref{eq:integrab change eq3} and \eqref{eq:integrab change eq4}. Therefore, there exists a version of $u$ which is continuous in $D$. 
	
    Take now $\overline f\in C_c^\alpha(\R^d)$ such that $\overline f=f$ in  a neighbourhood of $K$, $\supp \overline f\subsub  K' $, and such that $|| \overline f||_{C^\alpha(\R^d)}\le c_4(K,K',D)||f||_{C^\alpha(K')}$. Define $\overline u=G_{\R^d}^\phi \overline f$, and note that $\overline u$ solves $\LoR u=\overline f$ in $\R^d$ in the distributional sense by \cite[Proposition 2.5]{Bio22}. Hence, by Theorem \ref{t:regularity SBM} we get that $\overline u\in C^{\alpha+2\delta_1}(\R^d)$ and that 
    \begin{align}\label{eq:overline u smooth}
        ||\overline u||_{C^{\alpha+2\delta_1}(\R^d)}\lesssim||\overline f||_{C^{\alpha}(\R^d)}\le c_5||f||_{C^\alpha(K')},
    \end{align}
     where $c_5=c_5(d,\alpha,K',D,\phi)>0$. 
     
 It remains to estimate the difference $u-\overline u$ and, as a first step, we prove a representation for $\overline u$ analogous to \eqref{eq:u_by_v}. Define $\overline v\coloneqq G_{\R^d}^{\phi^*}\overline u$ and note that $\overline v$ is well defined,  since $\overline v=G_{\R^d}\overline f$ by Lemma \ref{l: GDf(GDf*)=GD}, and by a simple calculation it holds that 
    \begin{align}\label{eq:overline v integrability no2}
		|\overline v(x)|\le \int\limits_{\supp \overline f}\frac{1}{|x-y|^{d-2}}\,|\overline f(y)|dy\le c_6\|\overline f\|_{C^\alpha(\R^d)}(1\wedge |x|^{-d+2}), \quad x \in \R^d,
	\end{align}
	where $c_6=c_6(d,K',D)>0$.
Further, we get that $(-\Delta)\overline v=\overline f$ in $\R^d$ in the distributional sense. Indeed, for $\psi \in C_c^\infty(\R^d)$ we have
	\begin{align*}
		\int_{\R^d}\overline v(x)(-\Delta)\psi(x)dx&=\int_{\R^d}G_{\R^d}\overline f(x)(-\Delta)\psi(x)dx=\int_{\R^d} \overline f(x)G_{\R^d} \big((-\Delta)\psi\big)(x)dx\\
  &=\int_{\R^d}\overline f(x)\psi(x)dx,
	\end{align*}
	where the last equality follows from \cite[Proposition 2.8]{chung_zhao} since $d\ge 3$. Hence $\overline v \in C^{2+\alpha+2\delta_1}(\R^d)$, see e.g. \cite[Corollary 2.17]{RosOtonFernRegularBook} and notice that, in the same way as in Theorem \ref{t:regularity SBM}, the bound for $\overline v$ in $B(x,\frac12)$ in \cite[Corollary 2.17]{RosOtonFernRegularBook} does not depend on $x$ since $\overline f\in C_c^{\alpha}({\R^d})$. Now, similarly as in \eqref{eq:u_by_v}, we prove that
	\begin{align*}
		\overline u=\int_{0}^\infty\big(\overline v- P_t\overline v\big)\nu(t)dt.
	\end{align*}
	To this end, similarly as before, let $\psi\in C_c^\infty(\R^d)$. We have 
	\begin{align}
		\int_{\R^d} 	\overline u(x)\psi(x)dx&=\int_{\R^d} 	\overline v(x)\LozR\psi(x)\nonumber\\
		&=\int_{\R^d}\overline v(x)\left(\int_0^\infty\big(\psi(x)-P_t\psi(x)\big)\nu(t)dt\right)dx.\label{eq:integral change eq3}
	\end{align}
	Again, we want to change the order of integration in \eqref{eq:integral change eq3}.
	First, note that $\overline v$ is bounded on the bounded set $U\coloneqq \{x\in \R^d:\dist(x,\supp \psi)< 2 \diam (\supp \psi)\}\supset \supp \psi$. Also,
	\begin{align*}
		||\psi-P_t\psi||_{L^\infty(\R^d)}\le \min\{t||\Delta\psi||_{L^\infty(\R^d)},2\,||\psi||_{L^\infty(\R^d)}\}\lesssim 1\wedge t.
	\end{align*}
	Hence,
	\begin{align}\label{eq:overline v integrability no1}
		\int_{U}|\overline v(x)|\left(\int_0^\infty\big|\psi(x)-P_t\psi(x)\big|\nu(t)dt\right)dx<\infty.
	\end{align}
	Second, on $U^c$ we have $\psi=0$, and for $x\in U^c$ and for $y\in \supp \psi$ it holds that
    \begin{align*}
        {1\vee  |x|\asymp|x-y|,}
    \end{align*}
    where the constant depends only on $\supp \psi$ and $U^c$, so
    {\begin{align}\label{eq:proximity inequality0}
        j^*(|x-y|)\asymp j^*(1\vee |x|)=j^*(1)\wedge j^*(|x|)\asymp 1\wedge j^*(|x|),
    \end{align}
    with a comparability constant depending only on $\supp \psi$, $U^c$ and $\phi$. Here, we used monotonicity of $j^*$ and the properties \eqref{eq:slow decr jump kern}.}
    Hence,
	\begin{align}\label{eq:overline v integrability no3}
		&\hspace{-4em}\int_{U^c}|\overline v(x)|\left(\int_0^\infty\big|\psi(x)-P_t\psi(x)\big|\nu(t)dt\right)dx\nonumber\\&\le\int_{U^c}|\overline v(x)|\left(\int_0^\infty P_t|\psi|(x)\nu(t)dt\right)dx\nonumber\\
		&\le \int_{U^c}|\overline v(x)|\int_0^\infty\left( \int_{\R^d}\frac{1}{(4\pi t)^{d/2}}e^{-\frac{|x-y|^2}{4t}}|\psi(y)|dy\right)\nu(t)dt\,dx\nonumber\\
            &=\int_{U^c}|\overline v(x)|\int_{\R^d}|\psi(y)|j^*(|x-y|)dy\, dx\nonumber \\
		&\lesssim  \int_{U^c}|\overline v(x)|\big(1\wedge j^*(|x|)\big)dx<\infty,
	\end{align}
	where in the second to last line we used the definition of the jumping kernel $j^*$, and in the last line the property \eqref{eq:proximity inequality0}, and the fact that $\overline v\in L^1(\R^d,(1\wedge j^*(|x|))dx)$, see \cite[Lemma 2.4]{Bio22}.
	
	Finiteness of the integrals in \eqref{eq:overline v integrability no1} and \eqref{eq:overline v integrability no3} implies that we can change the order of integration in \eqref{eq:integral change eq3} to get
	\begin{align}\label{eq:integral change eq5}
		\int_{\R^d}\overline u(x)\psi(x)dx&=\int_0^\infty\int_{\R^d}\big(\overline v(x)\psi(x)-\overline v(x)P_t\psi(x)\big)dx\, \nu(t) dt\nonumber\\
        &=\int_0^\infty\int_{\R^d}\big(\psi(x)\overline v(x)-\psi(x)P_t\overline v(x)\big)dx\, \nu(t)dt.
	\end{align}
	We now change the order of integration in \eqref{eq:integral change eq5}. This can be justified by repeating the same trick with a bump function $\eta$ when we changed the order of integration in \eqref{eq:integral change eq2}, see also \eqref{eq:integrab change eq3} and \eqref{eq:integrab change eq4}. Thus, we get
	\begin{align*}
		\int_{\R^d} 	\overline u(x)\psi(x)dx&=\int_{\R^d}\int_0^\infty\big(\psi(x)\overline v(x)-\psi(x)P_t\overline v(x)\big)\nu(t) dt\,dx,
	\end{align*}
	i.e. $\overline u=\int_{0}^\infty\big(\overline v- P_t\overline v\big)\nu(t)dt$ a.e. in $\R^d$.
	
	Now we estimate $u-\overline u$. In what follows, we fix a minor error from \cite{AbaDupaNonhomo2017} caused by a miscalculation in the last displayed equation on page 450 in \cite{AbaDupaNonhomo2017}. Note that
	\begin{align*}
		u(x)-\overline u(x)&=\int_0^\infty(v(x)-P_t^Dv(x))\nu(t)dt-\int_0^\infty(\overline v(x)-P_t\overline v(x))\nu(t)dt\\
		&=\int_0^\infty\big(v(x)-\overline v(x)-P_t^D(v-\overline v)(x)\big)\nu(t)dt\\&\hspace{6em}+\int_0^\infty\big(P_t\overline v(x)-P_t^D\overline v(x)\big)\nu(t)dt\\
		&\eqqcolon I_1(x)+I_2(x),
	\end{align*}
    where, as a consequence of the following calculations, both $I_1$ and $I_2$ are well defined.
    
    To estimate the first term $I_1$ note that $w(x,t)\coloneqq P_t^D(v-\overline v)(x)$ solves the heat equation in $D$ with a boundary condition $w(x,0)=(v-\overline v)\in C^{2+\alpha}(D)$. Also, $v-\overline v$ is harmonic in a neighborhood of $K$ in the classical sense, hence smooth in a neighborhood of $K$. However, $v-\overline v\in \LLL$ can explode at the boundary so we need to be careful if we want to apply the classical parabolic regularity theory to $w(x,t)$. 
    
    Take $\eta\in C_c^\infty(D)$ such that $0\le \eta\le 1$, $\eta\equiv 1$ in a neighbourhood of $K$, and $\supp \eta \subset K'$. We have for all $x\in K$
    \begin{align*}
        v(x)-\overline v(x)-P_t^D(v-\overline v)(x)&= \left(v(x)-\overline v(x)-P_t^D\big[\eta(v-\overline v)\big](x)\right)\\&\hspace{5em}-P_t^D\big[(1-\eta)(v-\overline v)\big](x)\\
        &\eqqcolon J^{(1)}_1(x,t)-J^{(1)}_2(x,t).
    \end{align*}
       Since $h\coloneqq \eta (v-\overline v)\in C^{2+\alpha}_c(D)$ we have for $x\in K$
    \begin{align*}
        J^{(1)}_1(x,t)&=h(x)-P_t^Dh(x)\\
        &=h(x)-P_th(x)+\int_D \ex_y\left[p(t-\tau_D,W_{\tau_D},x)\1_{\{\tau_D<t\}}\right]h(y)dy\\
        &=h(x)-P_th(x)+\wt{P_t}^Dh(x).
    \end{align*}
    Here, we set $\wt{P_t}^Dh(x)=\int_D \ex_y\left[p(t-\tau_D,W_{\tau_D},x)\1_{\{\tau_D<t\}}\right]h(y)dy$ which is well defined since $h\in L^\infty(D)$.
    Note that $P_t$ and partial derivatives commute. Also, for $x\in K$, we have $\diam D\ge |x-W_{\tau_D}|\ge \dist(K,\partial D)>0$, so every partial derivative with respect to $x$ satisfies
    \begin{align}\label{eq:part deriv of remainder trans kernel}
    |\partial^\beta \ex_y\left[p(t-\tau_D,x,W_{\tau_D})\1_{\{t>\tau_D\}}\right]|\le c(d,\beta,K,D)(1\wedge t).
    \end{align}
    Hence, by \cite[(13.3)]{bernstein} and \eqref{eq:part deriv of remainder trans kernel} we get
    \begin{align}\label{eq:J^1_1 bound eq1}
        \|J^{(1)}_1(\cdot,t)\|_{C^{2+\alpha}(K)}&\le \|h-P_th\|_{C^{2+\alpha}(K)}+\|\wt{P_t}^Dh\|_{C^{2+\alpha}(K)}\nonumber\\
        &\le c(d,K,D)(1\wedge t)\left(\| h\|_{C^{4+\alpha}(K)}+\|h\|_{L^\infty(D)}\right),
    \end{align}
    where $\| h\|_{C^{4+\alpha}(K)}$ is finite since $h$ is (classically) harmonic in  a neighborhood of $K$, hence smooth.
    Also, by the elliptic regularity, it follows that
    \begin{align}\label{eq:J^1_1 bound eq2}
        \|h\|_{C^{4+\alpha}(K)}\le c\| h\|_{L^1(K')}\le c(d,K,K')\|v-\overline v\|_{\LLL},
    \end{align}
    Recall that $\overline v=G_{\R^d}\overline f$, so
	\begin{align}
		||v-\overline v||_{\LLL}&\le ||v||_{\LLL}+\|\overline v\|_{\LLL}\nonumber\\
         &\le c||u||_{\LLL}+\int_D\int_D G_{R^d}(x,y)|\overline f(y)|\de(x)dx\,dy\nonumber\\
         &\le c(d,K,K',D,\phi) \big(||u||_{\LLL}+||f||_{C^{\alpha}(K')}\big).\label{eq:v-overline v integral bound}
    \end{align}
    Thus, by combining \eqref{eq:J^1_1 bound eq1},  \eqref{eq:J^1_1 bound eq2} and \eqref{eq:v-overline v integral bound}, we get
    \begin{align*}
        \|J^{(1)}_1(\cdot,t)\|_{C^{2+\alpha}(K)}&\le c(d,K,K',D,\phi)(1\wedge t)\big(||u||_{\LLL}+||f||_{C^{\alpha}(K')}\big).
    \end{align*}
    For the term $J^{(1)}_2$, first note that $J^{(1)}_2\in C^\infty(D\times(0,\infty))$. Indeed, since $(1-\eta)(v-\overline v)\in \LLL$, the upper bound in \eqref{eq:heat kernel estimate} implies that $\|J^{(1)}_2(\cdot,t)\|_{L^\infty(D)}\le c(t)$, so by the semigroup property we have
    \begin{align}
    J^{(1)}_2(x,t)&=P_{t-\varepsilon}^D\left(P_\varepsilon^D\big[(1-\eta)(v-\overline v)\big]\right)\nonumber\\
    &=P_{t-\varepsilon}(J^{(1)}_2(\cdot,\varepsilon))-\wt{P}_{t-\varepsilon}^D(J^{(1)}_2(\cdot,\varepsilon)).\label{eq:J_2 smooth}
    \end{align}
    Since $J^{(1)}_2(\cdot,\varepsilon)$ is bounded in $D$, we can differentiate under the integral sign in \eqref{eq:J_2 smooth} to get that $J^{(1)}_2\in C^\infty(D\times(0,\infty)))$.
    Further, since $J^{(1)}_2(x,t)=P_t^D\big[(1-\eta)(v-\overline v)\big](x)$ also solves the heat equation in $D\times (0,\infty)$, by the standard parabolic regularity, see e.g. \cite[Theorem 9 in Section 2.3]{evans_pde}, we have
    \begin{align}
        \sup_{s\in (\frac34 t,t)}\|J^{(1)}_2(\cdot,s)\|_{C^{2+\alpha}(K)}&\le \frac{c(K,K')}{t^{2+\alpha}}\sup_{s\in (\frac12 t,t)}\|J^{(1)}_2(\cdot,s)\|_{L^\infty(K')}\nonumber\\
        &=\frac{c(K,K')}{t^{2+\alpha}}\sup_{s\in (\frac12 t,t)}\|P_s^D\big[(1-\eta)(v-\overline v)]\|_{L^\infty(K')}\label{eq:parabolic eq dimin.1}\\
        &\le c(d,K,K',D)\|v-\overline v\|_{\LLL},\label{eq:parabolic eq dimin.2}
    \end{align}
    where the last inequality comes from \eqref{eq:heat kernel estimate} and the fact that $(1-\eta)(v-\overline v)\equiv 0$ in a neighbourhood of $K'$. In fact, the last inequality holds for any exponent instead of the exponent $2+\alpha$ but with a different constant $c(d,K,K',D)$. Hence, by combining \eqref{eq:parabolic eq dimin.2} with \eqref{eq:v-overline v integral bound} we obtain
    \begin{align*}
        \|J^{(1)}_2(\cdot,t)\|_{C^{2+\alpha}(K)}&\le c(d,K,K',D)(1\wedge t)\big(\|u\|_{\LLL}+\|f\|_{C^\alpha({K'})}\big).
    \end{align*}
    Hence,
    \begin{align}\label{eq:I1 smooth}
        \|I_1\|_{C^{2+\alpha}(K)}&\le c(d,K,K',D)(1\wedge t)(\|u\|_{\LLL}+\|f\|_{C^\alpha({K'})}\big).
    \end{align}
 
In order to estimate $I_2$, note that
 \begin{align*}
		P_t\overline v(x)&-P^D_t\overline v(x)=\int_{\R^d}p(t,x,y)\overline v(y)dy\\
		&\qquad\qquad\qquad-\int_{D}\big(p(t,x,y)-\ex_y\left[p(t-\tau_D,W_{\tau_D},x)\1_{\tau_D<t}\right]\big)\overline v(y)dy\\
		&=\int_{D^c}p(t,x,y)\overline v(y)dy+\int_D\ex_y\left[p(t-\tau_D,W_{\tau_D},x)\1_{\tau_D<t}\right]\overline v(y)dy\\
		&\eqqcolon J^{(2)}_1(x,t)+J_2^{(2)}(x,t),
	\end{align*}
where both of these integrals are well defined, due to \eqref{eq:overline v integrability no2}. First, we prove
\begin{align}\label{eq:J1 smooth}
    |\partial^\beta J^{(2)}_i(x,t)|\le c(d,\beta,K,D)\|\overline f\|_{C^\alpha(\R^d)}(1\wedge t),\quad x\in K,
\end{align}
for $i=1$. If $x\in K$ and $y \in D^c$ we have $|x-y|\asymp 1\wedge |y|$ where the constant of comparability depends only on $d$, $K$ and $D$. Since $p(t,x,y)=(4\pi t)^{-d/2}e^{-|x-y|^2/(4t)}$, we easily see that every partial derivative of $p(t,x,y)$ with respect to $x$ satisfies
\begin{align*}
	|\partial^\beta p(t,x,y)|\le c(d)\,t^{-d/2-b}e^{-\frac{|x-y|^2}{4t}}|x-y|^a\le c(d,\beta,K,D)\,\frac{1\wedge |y|^a}{t^{d/2+b}}e^{-\frac{1\wedge |y|^2}{ct}},
\end{align*} 
for some $b=b(\beta)\ge a=a(\beta)\ge 0$ and $c>0$. 
By using this bound and \eqref{eq:overline v integrability no2} we have
\begin{align}
    \int_{D^c} |\partial^\beta p(t,x,y)||\overline v(y)|dy&\le c(d,\beta,K,D)\|\overline f\|_{C^\alpha(\R^d)}\int_1^\infty \frac{1}{t^{d/2+b}}e^{-\frac{r^2}{4t}}r^{a+1}dr\nonumber\\
    &\hspace{-4em}=c(d,\beta,K,D)\|\overline f\|_{C^\alpha(\R^d)}\int_{1/\sqrt{t}}^\infty \frac{1}{t^{d/2+b-a/2-1}}e^{-\frac{h^2}{4}}h^{a+1}dh.\label{eq:int heat diff bnd}
\end{align}
For small $t<1$, the right-hand side of \eqref{eq:int heat diff bnd} is bounded by $t$, since by L'H\^{o}pital's rule we have
\begin{align*}
    \lim_{t\to 0}\frac{\displaystyle \int_{1/\sqrt{t}}^\infty \frac{1}{t^{d/2+b-a/2-1}}e^{-\frac{h^2}{4}}h^{a+1}dh}{t}=\lim_{t\to 0}\frac{\displaystyle \int_{1/\sqrt{t}}^\infty e^{-\frac{h^2}{4}}h^{a+1}dh}{t^{d/2+b-a/2-2}}=0,
\end{align*}
and for large $t\ge 1$, the right-hand side of \eqref{eq:int heat diff bnd} is trivially bounded.
This allows us to differentiate $J^{(2)}_1(x,t)$ under the integral sign to obtain \eqref{eq:J1 smooth} for $i=1$.\\
Similarly, when differentiating $J^{(2)}_2(x,t)$ we may differentiate under the integral sign to get that $J^{(2)}_2(x,t)$ is smooth and that \eqref{eq:J1 smooth} holds for $i=2$. Note that this is justified by relation \eqref{eq:part deriv of remainder trans kernel} and the fact that $\overline v\in L^\infty(\R^d)$. \\
In other words, by integrating \eqref{eq:J1 smooth} with respect to $\nu(t)dt$, we get 
\begin{align}\label{eq:I2 smooth}
            ||I_2||_{C^{2\delta_1+\alpha}(K)}\le c(d,\alpha,K,K',D,\phi)||f||_{C^{\alpha}(K')}.
	\end{align}
 
By combining the bounds \eqref{eq:I1 smooth} and \eqref{eq:I2 smooth}, it follows that
\begin{align*}
    \|u-\overline{u}\|_{C^{\alpha+2\delta_1}(K)}\le C\big(||f||_{C^{\alpha}(K')}+||u||_{\LLL}\big).
\end{align*}
Together with \eqref{eq:overline u smooth} this implies \eqref{eq:t:spectral smooth1}. This completes the proof of \eqref{eq:t:spectral smooth1} for $k=0$, and the proof for $k\in \N$ has trivial differences.

The second part of the theorem, i.e. \eqref{eq:t:spectral smooth2}, follows by analogous calculations and here we emphasize only the non-trivial differences. If $f\in L_{loc}^\infty(D)$, then $v\in C^{1,\beta}(D)$, for every $\beta\in (0,1)$, see \cite[Section 2.2]{RosOtonFernRegularBook}. This implies $\eta v\in C^{1,\beta}_{c}(D)$, which is not smooth enough for $\eta v$ to be in $\DD(\Delta)$, which would allow us to apply \cite[Eq. (13.3)]{bernstein} to get \eqref{eq:integrab change eq3}. To circumvent this issue, note that
\begin{align*}
    \eta v(x)-P_t^D\eta v(x)&=\eta v(x)-P_t\eta v(x)\\&\hspace{2em}+\int_D \ex_y\left[p(t-\tau_D,x,W_{\tau_D})\1_{\{t>\tau_D\}}\right]\eta(y)v(y)dy.
\end{align*}
Now one can easily show, by the arguments displayed above, that 
\begin{align}\label{eq:semigroup nonsmooth bnd0}
    \left|\int_D \ex_y\left[p(t-\tau_D,x,W_{\tau_D})\1_{\{t>\tau_D\}}\right]\eta(y)v(y)dy\right|\lesssim 1\wedge t.
\end{align}
Further, since $p(t,x,y)$ is a symmetric  probability density, we have for all $x\in D$
\begin{align}
    |\eta v(x)-P_t\eta v(x)|&=\left|\int_{\R^d}p(t,x,y)\big(\eta(x) v(x)-\eta(y) v(y)+\nabla(\eta v)(x)\cdot(x-y)\big) dy  \right|\nonumber\\
    &\lesssim \int_{\R^d}p(t,x,y)|x-y|^{1+\beta}dy\nonumber\\
    &\asymp t^{\beta/2+1/2}\int_{0}^\infty e^{-h^2}h^{d+\beta}dh\lesssim t^{\frac{\beta+1}{2}}.\label{eq:semigroup nonsmooth bnd}
\end{align}
Here, we used the H\"older property for $\eta v$ in the second line.
For $t\ge 1$ 
\begin{align}\label{eq:semigroup nonsmooth bnd1}
    \|\eta v-P_t\eta v\|_{L^\infty(D)}\le  2\|\eta v\|_{L^\infty(D)}\lesssim 1.
\end{align}
By combining \eqref{eq:semigroup nonsmooth bnd0}, \eqref{eq:semigroup nonsmooth bnd}, and \eqref{eq:semigroup nonsmooth bnd1}, we obtain
\begin{align}\label{eq:nonsmooth final bnd}
    \|\eta v-P_t^D\eta v\|_{L^\infty(D)}\lesssim 1\wedge t^{\frac{\beta+1}{2}},
\end{align}
for all $\beta \in (0,1)$. Further, since we assume \ref{WSC}, by \cite[Eq. (2.13)]{ksv_minimal2016}, \eqref{eq:scaling and the derivative} and \eqref{eq:scaling condition phi*} it holds that  
\begin{align*}
    \nu(t)\lesssim \phi^*(t^{-1})/t\lesssim t^{-2+\delta_1},\quad t\le 1,
\end{align*}
 hence $\int_0^\infty(1\wedge t^{\wt\beta})\nu(t)dt<\infty$ for $\wt\beta > 1-\delta_1$. Since \eqref{eq:nonsmooth final bnd} holds for any $\beta\in (0,1)$, and in particular for $\beta$ such that $\frac{\beta+1}{2}>1-\delta_1$, we are allowed to change the order of integration in \eqref{eq:integral change eq2}. To conclude the proof, note that for $\overline f\in L_c^\infty(\R^d)$ with $\supp \overline f\subset{K'}$, we have $\overline v\in C^{1,\beta}(\R^d)$, for every $\beta\in (0,1)$, so when we change the order of integration in \eqref{eq:integral change eq5}, we can establish the same bound for $\overline v$ as in \eqref{eq:nonsmooth final bnd}.
\end{proof}

\begin{rem}\label{r:Gf indefinite no2}
    The assumption that $d\ge 3$ comes into play quite a few times in the proof of Theorem \ref{t:regularity SKBM}, e.g. in \eqref{eq:overline v integrability no2} and in the limit after \eqref{eq:int heat diff bnd}. However, the biggest impact of this assumption is in the function $\overline v$ which is not well defined when $d\le2$. Indeed, if $\overline f\ge0$ and $\overline f \not\equiv 0$, $\overline v$ is identically infinite, see Remark \ref{r:Gf indefinite no1}. Therefore, the presented approach, used as well in the proof of \cite[Lemma 21]{AbaDupaNonhomo2017}, cannot be applied in the case of $d\le 2$.
\end{rem}

\section{Large solutions}\label{s:large}

In this section we solve the following problem
\begin{equation}\label{eq:large problem}
	\begin{array}{rcll}
		\Lo u&=& - f(u)& \quad \text{in } D,\\
		\frac{u}{\PDFI\sigma}&=&\infty&\quad \text{on }\partial D,
	\end{array}
\end{equation}
in the distributional and in the pointwise sense, where $f$ is a nonnegative $C^{1}(\R)$ function such that $f(0)=0$, $f(t)>0$ for $t>0$, and $f$ satisfies the following condition:
\begin{assumption}{F}{}\label{F}
	There exist $0<m\le M<\infty$ such that
	$$ (1+m)f(t)\le tf'(t)\le (1+M)f(t),\quad t\in \R.$$
\end{assumption}
Note that, in essence, we are looking at generalisations of the classical superlinearity $f(t)=t^p$, $p>1$. Further, the condition \ref{F} implies that $f$ is nondecreasing, and that the functions $t\mapsto f(t)t^{-(1+M)}$ and $t\mapsto f(t)t^{-(1+m)}$ are  nonincreasing and nondecreasing, respectively. The assumption $f(x)>0$, $x>0$, is not essential. More precisely, we can instead assume $f\not\equiv 0$ and apply minimal technical changes at the beginning of the Subsection \ref{ss:construction large}, in particular, to the definition of the function $\varphi$ near the origin.

In order to obtain the solution, we mimic the approach for the spectral fractional Laplacian and the power nonlinearity in \cite{AbaDupaNonhomo2017}.
The main idea of the construction of a large solution is to build an auxiliary increasing sequence of weak solutions to \eqref{eq:large problem} but with boundary conditions that are finite and increasing, and to find a supersolution to \eqref{eq:large problem} (with an infinite boundary condition) which dominates the aforementioned auxiliary sequence of weak solutions. The goal is then to show that the auxiliary sequence converges to a solution to \eqref{eq:large problem}.

With this objective in mind, consider the sequence of semilinear problems:
\begin{equation}\label{eq:semi-approx}
	\begin{array}{rcll}
		\Lo u_j&=& - f(u_j)& \quad \text{in } D,\\
		\frac{u_j}{\PDFI\sigma}&=&j&\quad \text{on }\partial D,
	\end{array}
\end{equation}
where $j\in \N$ and where we assume that the nonlinearity $f$ satisfies the condition 
\begin{equation}\label{eq:integrability}
    \int_0^1 f\left(\frac{1}{s^2\phi(s^{-2})}\right)s\, ds=\frac 1 2 \int_0^1 f\left(\frac{1}{s\phi(s^{-1})}\right)\, ds<\infty.
\end{equation}
Since the nonlinearity $f$ is a nondecreasing function, the condition \eqref{eq:integrability}, together with \eqref{eq:PDFI_sigma} and \cite[Theorem 5.10]{Bio23}, implies that the problems \eqref{eq:semi-approx} have unique weak solutions
\begin{align*}
	u_j=-\GDFI f(u_j)+j\PDFI\sigma,\quad j\in \N,
\end{align*}
which are also continuous in $D$ and nonnegative. In the following lemma, we prove that $(u_j)_j$ is an increasing sequence of functions. Note that this claim does not immediately follow by the argument in \cite[Chapter 7]{AbaDupaNonhomo2017} based on the increasing boundary conditions in \eqref{eq:semi-approx}.  This is due to the fact that Kato's inequality \cite[Proposition 5.4]{Bio23}, cf. \cite[Lemma 29]{AbaDupaNonhomo2017}, holds only for the zero boundary condition problem.

\begin{lem}\label{l:approx incre}
	The sequence $(u_j)_j$ increases as $j\to\infty$.
\end{lem}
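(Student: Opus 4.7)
The plan is to show $u_j\le u_{j+1}$ by a pointwise maximum principle applied to $w:=u_j-u_{j+1}$. Although $w$ does not have a zero boundary value (so Kato's inequality \cite[Proposition 5.4]{Bio23} is not directly available), the strong boundary decay of $w$ will force its maximum to lie in the interior of $D$, where a direct pointwise comparison will yield the claim.

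First I would establish that the maximum of $w$ is attained at some interior point. Since $u_j,u_{j+1}$ are continuous and nonnegative on $D$ with $u_i/\PDFI\sigma\to i$ at $\partial D$, and since $\PDFI\sigma\to\infty$ at $\partial D$ by \eqref{eq:PDFI_sigma}, one has $w/\PDFI\sigma\to -1$ and hence $w(x)\to -\infty$ uniformly as $x\to\partial D$. Every superlevel set $\{w\ge c\}$ is therefore compactly contained in $D$, and a standard compactness argument gives a finite global maximum $M=w(x_0)$ at some $x_0\in D$. Using that $f\in C^1$ and $u_j,u_{j+1}$ are locally bounded in $D$, and iteratively applying Theorem~\ref{t:regularity SKBM}, I would upgrade the regularity of $u_j,u_{j+1}$ to $C^{2\delta_2+\varepsilon}$ on a neighborhood of $x_0$, so that by Remark~\ref{r:pointwise solution} the identities $\Lo u_i(x_0)=-f(u_i(x_0))$ and the pointwise formula \eqref{eq:Lo pointwisely} for $\Lo w(x_0)$ are all valid in the classical sense.

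I would then derive a contradiction assuming $M>0$. In this case $u_j(x_0)>u_{j+1}(x_0)\ge 0$, and since \ref{F} implies that $f$ is strictly increasing on $[0,\infty)$, we obtain $f(u_j(x_0))>f(u_{j+1}(x_0))$, hence by linearity
\[
\Lo w(x_0)=-f(u_j(x_0))+f(u_{j+1}(x_0))<0.
\]
On the other hand, at the global maximum $x_0$ we have $w(x_0)-w(y)\ge 0$ for all $y\in D$; combined with $J_D\ge 0$, $\kappa\ge 0$, and $w(x_0)>0$, the representation \eqref{eq:Lo pointwisely} yields $\Lo w(x_0)\ge 0$. This is the sought contradiction, forcing $M\le 0$ and hence $u_j\le u_{j+1}$ throughout $D$.

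The main technical point is making sure that the pointwise equation $\Lo u_i=-f(u_i)$ and the pointwise maximum principle for $\Lo$ are both valid at the interior maximum $x_0$. This is precisely what the interior regularity supplied by Theorem~\ref{t:regularity SKBM}, together with the interior location of $x_0$ forced by the blow-up of $\PDFI\sigma$ at $\partial D$, delivers---and it is exactly why no boundary analogue of Kato's inequality is needed.
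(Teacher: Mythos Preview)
Your approach is genuinely different from the paper's. The paper does not apply a pointwise maximum principle to $w=u_j-u_{j+1}$; instead it returns to the construction of $u_j$ in \cite[Theorem 5.10]{Bio23} as a limit of solutions $u_j^{(k)}$ to auxiliary zero-boundary problems \eqref{eq:semi non-positive fk approx}, applies Kato's inequality \cite[Proposition 5.4]{Bio23} at that level (where it is legitimate) to get $u_j^{(k)}\le u_{j+1}^{(k)}$, and then passes to the limit via a Cantor diagonal argument. Your route is more direct and is in fact precisely the maximum-principle device the paper itself uses later in Lemma~\ref{l:large finite}; note that it makes a harmless forward reference to the boundary limit in Lemma~\ref{l:approx sol. point.}, whose proof is independent of Lemma~\ref{l:approx incre}.

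There is, however, a subtle regularity gap. Your bootstrap from $f\in C^1$ to $u_j\in C^{2\delta_2+\varepsilon}$ stalls when $\delta_2-\delta_1\ge \tfrac12$: once $u_j\in C^{1,\gamma}$, the chain rule gives $\nabla(f\circ u_j)=f'(u_j)\nabla u_j$, and since $f'$ is merely continuous this product is only continuous, so $f(u_j)\in C^{0,\alpha}$ for $\alpha<1$ and Theorem~\ref{t:regularity SKBM} yields at best $u_j\in C^{1+2\delta_1-\epsilon}$, which may fall short of $C^{2\delta_2+\varepsilon}$. This is exactly why Lemma~\ref{l:approx sol. point.} needs $\alpha>2\delta_2-2\delta_1$ for the pointwise equation, and why Subsection~\ref{ss:existence} strengthens the standing hypothesis to $f\in C^{1\vee(2\delta_2-2\delta_1+\varepsilon)}$. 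Under that stronger assumption your argument is complete; the paper's Kato-based proof of Lemma~\ref{l:approx incre}, by contrast, works already under $f\in C^1$.
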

\begin{proof}
	To prove the monotonicity of the sequence $(u_j)_j$ we first present the construction of the solution to \eqref{eq:semi-approx} from \cite[Theorem 5.10]{Bio23}. The idea is to approximate the problem \eqref{eq:semi-approx} with a sequence of auxiliary zero boundary Dirichlet problems. Let $(\wt f_k)_k$ be a sequence of nonnegative, continuous and bounded functions on $D$ such that $\GDFI\wt f_k\uparrow \PDFI (\sigma)$ and let $u_j^{(k)}$, $j\in\N$, be a solution to the following linear problem
	\begin{align}\label{eq:semi non-positive fk approx}
		\begin{cases}
			\Lo u_j^{(k)}=-f\big(u_j^{(k)}\big)+j\wt f_k,&\textrm{in $D$},\\
			\frac{u_j^{(k)}}{\PDFI\sigma}=0,&\textrm{on $\partial D$}.
		\end{cases}
	\end{align}
	Then, by using the Arzel\`{a}-Ascoli theorem, we can find an appropriate subsequence of $(u_j^{(k)})_k$ that converges to the solution $u_j$ to \eqref{eq:semi-approx}. For details we refer to \cite[Theorem 5.10]{Bio23}. Next, note that the function $w\coloneqq u_{j}^{(k)}-u_{j+1}^{(k)}$ is a weak solution to the linear problem
	\begin{align*}
		\begin{cases}
			\Lo w=f\big(u_{j+1}^{(k)}\big)-f\big(u_{j}^{(k)}\big)-\wt f_k,&\textrm{in $D$},\\
			\frac{w}{\PDFI\sigma}=0,&\textrm{on $\partial D$}.
		\end{cases}
	\end{align*}
	Since $f$ is nondecreasing and $\wt f_k$ is nonnegative, by Kato's inequality \cite[Proposition 5.4]{Bio23}, we have
	\begin{align*}
		w^+=\big(u_{j}^{(k)}-u_{j+1}^{(k)}\big)^+&\le \GDFI\left(\big(f\big(u_{j+1}^{(k)}\big)-f\big(u_{j}^{(k)}\big)-\wt f_k\big)\1_{\{u_{j}^{(k)}\ge u_{j+1}^{(k)}\}}\right)\le 0,
	\end{align*}
	which implies that $u_{j}^{(k)}\le u_{j+1}^{(k)}$ in $D$.
	
	Now we use Cantor's diagonal argument to show that $u_j\le u_{j+1}$ in $D$. By the Arzel\`{a}-Ascoli theorem, see the proof of \cite[Theorem 5.10]{Bio23}, there is a subsequence $\big(u_{1}^{(k_n^{(1)})}\big)_n$ of $\big(u_{1}^{(k)}\big)_k$ which converges to $u_1$. We have shown that for every $k_n^{(1)}$, $n\in \N$, it holds that $u_{j}^{(k_n^{(1)})}\le u_{j+1}^{(k_n^{(1)})}$, for all $j\in \N$. Analogously, there is a subsequence $\big(u_{2}^{(k_n^{(2)})}\big)_n$ of $\big(u_{2}^{(k_n^{(1)})}\big)_n$ which converges to $u_2$. By continuing this procedure we arrive to a subsequence $(k_n^{(n)})_{n}$ for which we have
	\begin{align*}
		\lim_{n\to\infty} u_{j}^{(k_n^{(n)})}&=u_j,\quad j\in\N,\\
		u_{j}^{(k_n^{(n)})}&\le u_{j+1}^{(k_n^{(n)})},\quad j\in \N.
	\end{align*}
	Hence, $u_j\le u_{j+1}$ in $D$.
\end{proof}

 The following result justifies the consideration of the pointwise boundary condition in \eqref{eq:semi-approx}. Additionally, we consider further regularity of the solution when $f\in C^\alpha$. Recall that the reason for working in the setting of classical H\"older regularity, instead of generalised H\"older spaces corresponding to $\phi$ as in \cite{BaeKassmann} and \cite{KimLee}, is that we want to obtain regularity results for all ranges of Matuszewska indices in \eqref{eq:simple global scaling}.

\begin{lem}\label{l:approx sol. point.}
	It holds that $\lim\limits_{x\to\partial D}\frac{u_j(x)}{\PDFI\sigma(x)}=j$. Further, if {$f\in C^{1\vee (2\delta_2-2\delta_1+\varepsilon)}(\R)$, for some $\varepsilon>0$,  then $u_j$ is a pointwise solution to \eqref{eq:semi-approx}.}
\end{lem}
\begin{proof}

	Since $u_j\le j \PDFI\sigma$, by monotonicity of $f$, the condition \ref{F} and \eqref{eq:PDFI_sigma}, it follows that \begin{align*}
		0\le \GDFI\left(f(u_j)\right)\le \GDFI(f(j\PDFI\sigma))\le c(d,D, M, j) \GDFI\left(f(\de^{-2}\phi(\de^{-2})^{-1})\right).
	\end{align*}
	Since the condition \ref{F} implies that $t\mapsto f(t)t^{-(1+M)}$ is nonincreasing, one can easily check that the function $t\mapsto f(t^{-2}\phi(t^{-2})^{-1})$ satisfies the conditions \textbf{(U1)-(U4)} from \cite[Theorem 3.6]{Bio23}. This in turn implies that
	\begin{align*}
		0\le\lim_{x\to\partial D}\frac{\GDFI\left(f(u_j)\right)(x)}{\PDFI\sigma(x)}\lesssim \lim_{x\to\partial D}\frac{\GDFI\left(f(\de^{-2}\phi(\de^{-2})^{-1})\right)(x)}{\PDFI\sigma(x)}=0.
	\end{align*}
	Therefore,
	$$\lim_{x\to\partial D}\frac{u_j(x)}{\PDFI\sigma(x)}=\lim_{x\to\partial D}\frac{\PDFI(j\sigma)(x)}{\PDFI\sigma(x)}=j.$$
    
For the second part of the lemma, recall that by \cite[Theorem 5.10]{Bio23} $u_j$ is continuous, hence locally bounded. Also, recall that a weak solution is also a distributional one. Since {$f\in C^{1\vee (2\delta_2-2\delta_1+\varepsilon)}(\R)$}, we have $f(u_j)\in L^\infty_{loc}(D)$, so Theorem \ref{t:regularity SKBM} implies $u_j\in C^\beta(D)$ for every $\beta\in(0,2\delta_1)$.

{
Now we use a bootstrap argument. First note that that $f(u_j)\in C^{\gamma}(D)$ for $\gamma=\big(1\vee (2\delta_2-2\delta_1+\varepsilon)\big)\wedge \beta$, for all $\beta\in(0,2\delta_1)$, so by Theorem \ref{t:regularity SKBM}  we get $u_j\in C^{\gamma+2\delta_1}(D)$. This reasoning may be repeated until we reach that $f(u_j)\in C^{\gamma}(D)$ for some $\gamma>2\delta_2-2\delta_1$. Now Theorem \ref{t:regularity SKBM} implies $u_j$ is at least in $C^{2\delta_2+\wt\varepsilon}(D)$ for some $\wt \varepsilon>0$, so $u_j$ is a pointwise solution to \eqref{eq:semi-approx} by Remark \ref{r:pointwise solution}.}

\end{proof}

{
\begin{rem}
    The higher regularity statement in the second part of Lemma \ref{l:approx sol. point.} can be extended to general $f\in C^\alpha(\R)$.  More precisely, if $u$ is a locally bounded distributional solution to $\Lo u=-f(u)$, where $f\in C^\alpha(\R)$ for $\alpha\in (0,\infty)\setminus \N$ such that $\alpha+2\delta_1\notin \N$, then $u\in C^\beta(D)$ for 
    \begin{enumerate}[$(i)$]
        \item any $\beta\in\left(0,\frac{2\delta_1}{1-\alpha}\right)$, when $\alpha+2\delta_1<1$;
        \item $\beta=\alpha+2\delta_1$, when $\alpha+2\delta_1>1$.
    \end{enumerate}
    Indeed, in the case $(i)$, by applying the bootstrap argument we get that $u\in C^\gamma(D)$ for $\gamma=\frac{2 \delta_1}{1-\alpha}-\alpha^{k+1}(\frac{2\delta_1}{1-\alpha}-\beta)$, for all $\beta\in(0,2\delta_1)$ and for all $k\in \N$, from which the claim follows. 

    On the other hand, in the case $(ii)$, by the bootstrap argument, at some point we obtain that $u\in C^\gamma(D)$ for $\gamma=\frac{2 \delta_1}{1-\alpha}-\alpha^{k+1}(\frac{2\delta_1}{1-\alpha}-\beta)>1$, which in the following step implies $u\in C^{\gamma\wedge \alpha+2\delta_1}(D)$. As a continuation of this argument, we obtain $u\in C^{ \alpha+2\delta_1}(D)$.
    
In the setting of this section and, specifically in Lemma \ref{l:approx sol. point.}, we rely on Assumption \ref{F} which presumes $f\in C^1(\R)$ under which the existence of the solution is established.
\end{rem}
}
\subsection{Construction of a supersolution}\label{ss:construction large}
A key step in obtaining a solution to \eqref{eq:large problem} is the construction of a supersolution to the same problem. In this subsection, we find a Keller-Osserman-type condition that guarantees that such a supersolution exists. Let us first introduce several functions that are used to construct this supersolution. Recall that the function $f\in C^{1}$ satisfies the condition \ref{F}. Define the antiderivative of $f$ by
\begin{align*}
	F(t)=\int_0^tf(s)ds,\quad t>0,
\end{align*}
and set $\varphi:(0,\infty)\to (0,\infty)$ as
\begin{align*}
	\varphi(t)=\int_t^\infty \frac{ds}{\sqrt{F(s)}},\quad t>0.
\end{align*}
The function $\varphi$ is monotone decreasing so it possesses a decreasing inverse
\begin{equation}\label{eq:psi_definition}
\psi=\varphi^{-1,}
\end{equation}
which also satisfies  
\[
\lim\limits_{t\downarrow 0}\psi(t)=\infty,\ \lim\limits_{t\uparrow\infty}\psi(t)=0.
\]
Note that \ref{F} implies that
\begin{equation}\label{eq:f2}
    \varphi(t)\asymp \sqrt{\frac{t}{f(t)}},\qquad t>0,
\end{equation}
and
\begin{equation}\label{eq:psi1}
	\frac{t^2\psi''(t)}{\psi(t)}\asymp \frac{t^2\psi'(t)^2}{\psi(t)^2}\asymp 1,\qquad t>0.
\end{equation}
Furthermore, the assumption \ref{F} can be also written in terms of $\varphi$ or $\psi$, that is 
\begin{equation}\label{eq:varphi1}
\frac m 2 \frac{\varphi(t)}{t}\leq |\varphi'(t)|\leq \frac M 2 \frac{\varphi(t)}{t},\qquad t>0,
\end{equation}
and
\begin{equation}\label{eq:psi2}
\frac 2 M \frac{\psi(t)}{t}\leq |\psi'(t)|\leq \frac 2 m\frac{\psi(t)}{t},\qquad  t>0,
\end{equation}
see \cite[Remark 1.1]{Aba17}.

Recall that $V:[0,\infty)\to [0,\infty)$ is the renewal function defined in \eqref{eq:V}. In the following, we construct a supersolution to \eqref{eq:large problem} with the boundary behaviour comparable to the function 
\begin{equation}\label{eq:U}
U(x)=\psi(V(\de(x))),\qquad x\in D.    
\end{equation}
Similarly as in \cite[Lemma 3.1]{semilinear_cvw} the following Keller-Osserman-type condition implies the required integrability condition for the supersolution.

\begin{lem}\label{l:supersol integrability}
    The function $U$ from \eqref{eq:U} satisfies $U\in\LLL$ if and only if the following condition holds:
\begin{equation}\label{eq:KO1}
\int_1^\infty \frac{dt}{\phi^{-1}(\varphi(t)^{-2})}<\infty.
\end{equation}
\end{lem}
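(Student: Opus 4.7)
The plan is to reduce the $d$-dimensional integral $\int_D U(x)\de(x)\,dx$ to a one-dimensional integral in $r=\de(x)\in(0,r_0)$ for some small $r_0$, and then use a layer-cake identity (rather than a direct change of variables) to bypass any need for a lower bound on $V'$. For the first reduction, by $C^{1,1}$ regularity of $\partial D$ there is $r_0>0$ such that $(z,r)\mapsto z-r\,\mathbf{n}(z)$ is a bi-Lipschitz parametrization of $\{x\in D:\de(x)<r_0\}$ by $\partial D\times(0,r_0)$ with Jacobian bounded above and below; since $\psi\circ V$ is continuous on $(0,\infty)$, $\sigma(\partial D)<\infty$ and the region $\{\de\ge r_0\}$ is a fixed compact subset of $D$, Fubini gives
\[
\int_D U(x)\,\de(x)\,dx\asymp\int_0^{r_0}\psi(V(r))\,r\,dr+O(1),
\]
so the claim reduces to showing the equivalence of $\int_0^{r_0}\psi(V(r))\,r\,dr<\infty$ with \eqref{eq:KO1}.

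For the key step I would apply the layer-cake identity. Setting $\rho(r):=\psi(V(r))$, which is strictly decreasing on $(0,r_0]$, and $R(s):=V^{-1}(\varphi(s))$, one has $\rho(r)>s\Longleftrightarrow V(r)<\varphi(s)\Longleftrightarrow r<R(s)$ whenever $\varphi(s)\le V(r_0)$. With $s_0:=\psi(V(r_0))$, splitting the $s$-integral at $s_0$ yields
\[
\int_0^{r_0}\psi(V(r))\,r\,dr=\frac{r_0^2 s_0}{2}+\frac12\int_{s_0}^{\infty}R(s)^2\,ds,
\]
so finiteness of the left-hand side is equivalent to $\int_{s_0}^{\infty}R(s)^2\,ds<\infty$.

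To finish I would invert the asymptotic $V(r)\asymp\Phi(r)=\phi(r^{-2})^{-1/2}$ from \eqref{e:V-phi}: monotonicity of $\Phi$ together with the weak scaling of $\phi$ in \ref{WSC} (which is enough to absorb a multiplicative constant in the argument of $\phi^{-1}$) yields $V^{-1}(u)\asymp\Phi^{-1}(u)=\phi^{-1}(u^{-2})^{-1/2}$ for small $u$. Evaluated at $u=\varphi(s)$ this gives $R(s)^2\asymp 1/\phi^{-1}(\varphi(s)^{-2})$; since the integrand is positive and continuous, the lower endpoint of integration may be replaced by $1$ without affecting convergence, producing the equivalence with \eqref{eq:KO1}. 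The main obstacle is precisely this inversion: only an upper bound $V'(r)\lesssim V(r)/r$ is recorded in \eqref{eq:H1}, so a direct substitution $u=V(r)$ would require a two-sided control on $V'$ that is not available here; the layer-cake representation sidesteps this by never differentiating $V$.
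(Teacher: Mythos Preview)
Your proof is correct and the overall structure is the same as the paper's: reduce to a one-dimensional integral near the boundary, then show its finiteness is equivalent to \eqref{eq:KO1}. The genuine difference is in the second step. The paper first replaces $V$ by $\Phi$ in the integrand, using $V\asymp\Phi$ together with the doubling of $\psi$ that comes from \eqref{eq:psi2}, and then performs the honest change of variables $s=\psi(\Phi(t))$, exploiting the two-sided bound $\Phi'(t)\asymp\Phi(t)/t$ (available from \eqref{eq:scaling and the derivative}) and \eqref{eq:varphi1}. You instead keep $V$ in place, use a layer-cake identity to obtain the exact formula $\int_0^{r_0}\psi(V(r))\,r\,dr=\tfrac{r_0^2 s_0}{2}+\tfrac12\int_{s_0}^\infty R(s)^2\,ds$, and only at the end invert the comparison $V\asymp\Phi$ to get $R(s)^2\asymp 1/\phi^{-1}(\varphi(s)^{-2})$, absorbing the multiplicative constant via the weak scaling of $\phi^{-1}$. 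So both routes ultimately rely on a scaling property (the paper on that of $\psi$, you on that of $\phi^{-1}$); your layer-cake argument has the advantage of never touching $V'$ or $(\Phi^{-1})'$, while the paper's substitution is a bit more direct once $V$ has been swapped for $\Phi$. Your remark that a two-sided bound on $V'$ is unavailable is accurate for $V$ itself, but the paper sidesteps this not by layer-cake but by passing to $\Phi$ before differentiating.
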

\begin{proof}
 For $\eta>0$ set $D_\eta=\{x\in D:\ \delta_D(x)<\eta\}$. 
Since  $\psi,V\in L_{loc}^\infty(0,\infty)$ and $\delta\in L^\infty(D)$, it follows that $U\in L^\infty_{loc}(D)$. Therefore, it is enough to show that \eqref{eq:KO1} holds if and only if $U\in L^1(D_{\eta_0},\delta_D)$, for some small $\eta_0>0$. By applying the coarea formula in the first equality and using \eqref{e:V-phi} several times, we arrive to
\begin{align*}
\int_{D_{\eta_0}}U(x)\delta_D(x) dx&=\int_0^{\eta_0} dt\int_{\{x\in D:\delta(x)=t\}}\psi(V(t))t\,d\mathcal H^{d-1}(dx)\asymp \int_0^{\eta_0}\psi(\Phi(t))t\, dt\\
&= \int_{\psi(\Phi(\eta_0))}^\infty s\,\Phi^{-1}(\varphi(s))(\Phi^{-1})'(\varphi(s))|\varphi'(s)|ds\\
\overset{\eqref{eq:varphi1}}&{\asymp} \int_{\psi(\Phi(\eta_0))}^\infty \Phi^{-1}(\varphi(s))(\Phi^{-1})'(\varphi(s))\varphi(s)ds\\
&=\int_{\psi(\Phi(\eta_0))}^\infty \frac{\Phi^{-1}(\varphi(s))\varphi(s)}{\Phi'(\Phi^{-1}(\varphi(s)))} ds\asymp \int_{\psi(\Phi(\eta_0))}^\infty (\Phi^{-1}(\varphi(s)))^2 ds\\
&\asymp\int_{\psi(\Phi(\eta_0))}^\infty \frac{ds}{{\phi^{-1}(\varphi(s)^{-2}))}},
\end{align*}
where in the second to last approximate equality we used $\Phi'(t)\asymp \frac{\Phi(t)}{t}$, for $0<t<1$, see \eqref{eq:scaling and the derivative}. The result now follows by \eqref{eq:KO1}. 
\end{proof}

Next, we prove that, under a slightly stronger Keller-Osserman-type condition,  $U$ has a certain behaviour near the boundary of $D$ suitable for the construction of a supersolution. 

\begin{lem}\label{l:supersol lem1}
 If 
 \begin{equation}\label{eq:KO2}
\int_r^\infty \frac{dt}{\phi^{-1}(\varphi(t)^{-2})}\lesssim\frac{r}{\phi^{-1}(\varphi(r)^{-2})},\qquad r\ge 1,
\end{equation}
 then there exist constants $C>0$ and $\eta>0$ such that
	\begin{align*}
		\Lo U(x)\ge - Cf(U(x)),\quad x\in D_\eta.
	\end{align*}
\end{lem}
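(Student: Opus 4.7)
The plan is to bound $\Lo U$ below pointwise via its principal value representation \eqref{eq:Lo pointwisely}, splitting the integration into a near-diagonal part and a far-field part, and controlling each contribution by $f(U)$ on $D_\eta$ for $\eta$ sufficiently small. Before that, I would verify that $U=\psi(V(\de(\cdot)))$ enjoys enough regularity for \eqref{eq:Lo pointwisely} to apply pointwise. Since $D\in C^{1,1}$, we have $\de\in C^{1,1}(D_\eta)$ with $|\nabla\de|=1$ and $|D^2\de|\lesssim 1$. Combining this with \eqref{eq:psi1}--\eqref{eq:psi2} for $\psi$ and \eqref{eq:H1}--\eqref{eq:H2} for $V$, together with $V(t)\asymp \Phi(t)=\phi(t^{-2})^{-1/2}$ and $f(\psi(t))\asymp \psi(t)/t^2$ (which follows from \ref{F} and \eqref{eq:f2}), a chain-rule computation yields the two benchmarks
\begin{equation*}
|D^2 U(y)|\lesssim \frac{U(y)}{\de(y)^2}, \qquad f(U(y))\asymp U(y)\,\phi(\de(y)^{-2}), \qquad y\in D_\eta.
\end{equation*}
Membership of $U$ in $\LLL$ follows from Lemma \ref{l:supersol integrability}, because \eqref{eq:KO2} implies \eqref{eq:KO1}.

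Fixing $x\in D_\eta$ with $r=\de(x)$ small, I would then decompose
\begin{equation*}
\Lo U(x)=I_{\mathrm{near}}(x)+I_{\mathrm{far}}(x)+\kappa(x)U(x),
\end{equation*}
where $I_{\mathrm{near}}$ is the principal value integral over $B(x,r/2)$ and $I_{\mathrm{far}}$ is the integral over $D\setminus B(x,r/2)$. The killing term $\kappa(x)U(x)$ is nonnegative, hence only aids the desired lower bound. For the near-diagonal piece, on $B(x,r/2)$ one has $\de(y)\in[r/2,3r/2]$, so $J_D(x,y)\asymp j(|x-y|)$ by \eqref{eq:J_D estimate} and $|D^2 U|\lesssim U(x)/r^2$ by the first step. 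Symmetrizing the principal value via a second-order Taylor expansion and using \eqref{eq:sharp bnd jump kern} together with the upper scaling \eqref{eq:scaling condition} gives
\begin{equation*}
|I_{\mathrm{near}}(x)|\lesssim \frac{U(x)}{r^2}\int_0^{r/2}s\,\phi(s^{-2})\,ds\lesssim U(x)\,\phi(r^{-2})\asymp f(U(x)).
\end{equation*}

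For the far-field piece I would use $U\ge 0$ to write $I_{\mathrm{far}}(x)\ge -\int_{D\setminus B(x,r/2)}U(y)J_D(x,y)\,dy$ and reduce the remaining integral to a one-dimensional one. Using \eqref{eq:J_D estimate} and the coarea formula, together with the substitution $t=\psi(V(s))$ that trades $s$ for $s\asymp \Phi^{-1}(\varphi(t))\asymp \phi^{-1}(\varphi(t)^{-2})^{-1/2}$, this integral is controlled by a constant times the left-hand side of \eqref{eq:KO2} evaluated at a point comparable to $\psi(V(r))$. Invoking \eqref{eq:KO2} then produces $\int_{D\setminus B(x,r/2)}U(y)J_D(x,y)\,dy\lesssim f(U(x))$, and combining everything yields $\Lo U(x)\ge -Cf(U(x))$ on $D_\eta$. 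The main obstacle is precisely this far-field estimate: one must split correctly according to the two regimes in \eqref{eq:J_D estimate} (depending on whether $\de(x)\de(y)/|x-y|^2$ is bounded above or below by $1$) and carefully track the scaling exponents of $\phi$, $V$ and $\psi$ until the one-dimensional form in \eqref{eq:KO2} is visible. The Keller-Osserman-type condition \eqref{eq:KO2} is calibrated exactly to balance the boundary blow-up of $U(y)$ against the decay of $J_D(x,\cdot)$, so once the reduction to one dimension is achieved, the estimate closes.
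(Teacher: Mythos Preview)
Your overall architecture---verify $U\in C^{1,1}(D_\eta)\cap\LLL$, split $\Lo U$ into a near-ball and a far-field contribution, and absorb everything into $f(U)$---is the same as the paper's, and the preliminary estimates you quote ($|D^2U|\lesssim U/\de^2$, $f(U)\asymp U\,\phi(\de^{-2})$) are exactly the right benchmarks. There is, however, one genuine gap and one place where your sketch is too vague compared with what is actually needed.

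\medskip
\textbf{The near-ball estimate.} Your bound $|I_{\mathrm{near}}(x)|\lesssim U(x)r^{-2}\int_0^{r/2}s\,\phi(s^{-2})\,ds$ presupposes that the first-order Taylor term $\mathrm{P.V.}\int_{B_x}\nabla U(x)\cdot(y-x)\,J_D(x,y)\,dy$ vanishes. This would follow if $J_D(x,\cdot)$ were radially symmetric about $x$, but it is not: $J_D(x,y)$ depends on $\de(y)$ as well, and the comparability $J_D(x,y)\asymp j(|x-y|)$ on $B_x$ does \emph{not} give cancellation. In fact, $\int_{B_x}|y-x|\,j(|x-y|)\,dy$ diverges whenever $\delta_2\ge 1/2$, so the first-order term cannot simply be bounded in absolute value. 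The paper circumvents this by writing $J_D(x,y)=j(|x-y|)-\int_0^\infty\ex_x[p(t-\tau_D,W_{\tau_D},y)\1_{\{t>\tau_D\}}]\mu(t)\,dt$; the first piece is genuinely radial about $x$, so the gradient term integrated against it vanishes, while the correction kernel is uniformly bounded on $B_x$ (since $|y-W_{\tau_D}|\ge\de(y)\ge r/2$) and therefore harmless. You need this decomposition, or an equivalent device.

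\medskip
\textbf{The far-field estimate.} Your plan to bound $\int_{D\setminus B_x}U(y)J_D(x,y)\,dy$ by coarea and the substitution $t=\psi(V(s))$ is the right endgame, but as written it skips the geometric reduction that makes the estimate close. The paper decomposes by the value of $\de(y)$ rather than by $|x-y|$: on $\{\de(y)>\tfrac32\de(x)\}$ one has $U(y)\le U(x)$ (monotonicity of $\psi\circ V$), so that region contributes with the favourable sign and can be dropped outright; on $\{\tfrac12\de(x)\le\de(y)\le\tfrac32\de(x)\}\setminus B_x$ one has $U(y)\asymp U(x)$ and the estimate is the known tail bound for $j$. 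The delicate region is $\{\de(y)<\tfrac12\de(x)\}$, where the paper uses the $C^{1,1}$ tubular-neighbourhood parametrisation $y=Q_y+\de(y)\nabla\de(y)$ and the key distance inequality $|x-y|\gtrsim|\de(x)-\de(y)|+|\overline{Q_y}|$ to turn the $(d{-}1)$-dimensional boundary integral into a one-dimensional one; only after integrating out the tangential variable does the integrand reduce to $\psi(V(t))\,t\,\phi(\de(x)^{-2})/\de(x)^2$, and then the substitution you describe together with \eqref{eq:KO2} finishes the job. Without this flattening step, ``coarea plus substitution'' does not by itself control the inner surface integral over $\{\de(y)=s\}\setminus B_x$.

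Both points are standard and fixable along the lines above, but as written the proposal does not close.
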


\begin{proof}
     Let us fix $\eta>0$ and $x\in D_{\eta}$, where $D_{\eta}$ is defined as in Lemma \ref{l:supersol integrability}.  By following the approach in \cite[Proposition 3.2]{semilinear_cvw}, we decompose $D=\cup_{i=1}^3D_i$ such that
\begin{align*}
&D_1=\left\{y\in D: \delta_D(y)>\tfrac{3}{2}\delta_D(x)\right\},\\
&D_2=\left\{y\in D: \tfrac{1}{2}\delta_D(x)\leq\delta_D(y)\leq\tfrac{3}{2}\delta_D(x)\right\},\\
&D_3=\left\{y\in D: \delta_D(y)<\tfrac{1}{2}\delta_D(x)\right\}.
\end{align*}
Note that $U\in\LLL$ by \eqref{eq:KO2} and Lemma \ref{l:supersol integrability}. Furthermore, $U\in C^{1,1}(D)$, since $\psi$ and $V$ are twice differentiable by \ref{F} and by \cite[Lemma 2.5]{KKLL}, respectively, and that $\de$ is a $C^{1,1}(\overline D)$ function by \cite[Theorem 8.4 in Chapter 7]{DZ}. Therefore, by \eqref{eq:Lo pointwisely} and the fact that $t\mapsto\psi(V(t))$ is nonincreasing we have that
\begin{align*}
-\Lo U(x)&=\textrm{P.V.}\int_D (U(y)-U(x))J_D(x,y) dy -\kappa(x)U(x)\\
&\le \textrm{P.V.}\int_{D_2} (U(y)-U(x))J_D(x,y) dy+\int_{D_3} (U(y)-U(x))J_D(x,y) dy\\
&=:J_2+J_3.
\end{align*}

First, split $J_2$ into two parts
\begin{align*}
    J_2&=\textrm{P.V.}\int\limits_{B_x}(U(y)-U(x))J_D(x,y) dy+\!\!\int\limits_{D_2\setminus B_x}(U(y)-U(x))J_D(x,y) dy\eqqcolon J^1_2+J^2_2,
\end{align*}
where $B_x\coloneqq B(x,\de(x)/2)$. By symmetry, we have
$$\textrm{P.V.}\int_{B_x}\nabla U(x)\cdot(y-x)\left(\int_0^\infty p(t,x,y)\mu(t)dt\right)dy=0,$$
which, together with \eqref{e:J_D} and \eqref{eq:trans.dens. KBM}, implies that
\begin{align*}
    J^1_2&=\textrm{P.V.}\int_{B_x}\big(U(y)-U(x)-\nabla U(x)\cdot(y-x)\big)J_D(x,y) dy\\
    &\qquad +\textrm{P.V.}\int_{B_x}\nabla U(x)\cdot(y-x)\left(\int_0^\infty \ex_x[p(t-\tau_D,y,W_{\tau_D})\1_{\{t>\tau_D\}}]\mu(t)dt\right) dy.
\end{align*}
Since $J_D(x,y)\lesssim j(|x-y|)$ by \eqref{eq:J_D estimate}, the first term in $J^1_2$ can be dealt with in the same way as in \cite{semilinear_cvw} in order to obtain that
\begin{align*}
    \left|\int_{B_x}\big(U(y)-U(x)-\nabla U(x)\cdot(y-x)\big)J_D(x,y) dy\right|\lesssim \psi(V(\de(x)))V(\de(x))^{-2}.
\end{align*}
For the second term in $J^1_2$, note that $\nabla U=\psi'(V(\de))V'(\de)\nabla \de$, and
\begin{align*}
    \max_{t\in (0,\infty)}\frac{p(t,y,W_{\tau_D})}{1\wedge t}\le c(d)\max_{t\in (0,\infty)}\frac{t^{-d/2}e^{-\de(x)^2/8t}}{1\wedge t}\le c(d,D) \de(x)^{d+2},    
\end{align*}
since $|y-W_{\tau_D}|\ge\de(y)\ge \frac12 \de(x)$. This, together with \eqref{eq:varphi1} and \eqref{eq:H1}, implies that
\begin{align*}
    &\left|\int_{B_x}\nabla U(x)\cdot(y-x)\left(\int_0^\infty \ex_x[p(t-\tau_D,y,W_{\tau_D}\1_{\{t>\tau_D\}}]\mu(t)dt\right) dy\right|\\
    &\qquad\lesssim \psi(V(\de(x)))\de(x)^{-1}\int_0^{\de(x)/2}\frac{h^d}{\de(x)^{d+2}}\asymp \psi(V(\de(x)))\de(x)^{-2}\\
    &\qquad\le \psi(V(\de(x)))V(\de(x))^{-2}.
\end{align*}
Thus, $J^1_2\lesssim \psi(V(\de(x)))V(\de(x))^{-2}$. Since $J_D(x,y)\lesssim j(|x-y|)$, by applying \cite[Eq. (3.6)]{semilinear_cvw}, we obtain the same upper bound for $J^2_2$, and therefore $J_2$.

For $J_3$ we repeat the analogous calculation from \cite[ Proposition 3.2]{semilinear_cvw}. Let $Q\in\partial D $ be the projection of $x$ on $\partial D $, that is $x=Q+\delta(x)\nabla\delta(x)$, and $\gamma=\gamma_Q$ the $ C^{1,1}$ function such that
\begin{equation*}
B(Q,R) \cap D  = \{y = (\widetilde y,y_d) \in B(Q,R) \text{ in } CS_Q : y_d > \gamma(\widetilde y)\},
\end{equation*}
where $CS_Q$ denotes the coordinate system with $Q=0$ and $\nabla\delta(x)=e_d$. Recall that the pair $(R,\Lambda)$, where $\Lambda$ is such that $|\nabla \gamma_Q(\wt x)-\nabla \gamma_Q(\wt y)|\le \Lambda |\wt x-\wt y|$, is called the characteristics of the $C^{1,1}$ set $D$. Set 
\begin{equation*}
\omega=\{y\in D:y=Q_y+\delta(y)\nabla\delta(y),\ Q_y\in B(0,R) \cap\partial D \}
\end{equation*}
and
\begin{equation*}
J_3=\int_{D_3\setminus\omega } (U(y)-U(x))J_D(x,y) dy+\int_{D_3\cap \omega} (U(y)-U(x))J_D(x,y) dy=:J_3^1+J_3^2.
\end{equation*}
First note that, for $ \eta $ small enough (depending only on $D$), we have that $|y-x| > \de(x)\vee \de (y)$ for every $y\in D_3\setminus \omega$ and, therefore, \eqref{eq:J_D estimate} implies that $J_D(x,y)\lesssim \de(x)\de(y)$. Since $U$ explodes at the boundary, it follows that
\begin{align}\label{eq:J31}
J_3^1\lesssim \de(x)\int_{D_3\setminus \omega} U(y)\de(y) dy\leq \de(x)||U||_{L^1(D_{\eta},\de(x)dx)} \lesssim f(U(x)).
\end{align}
Similarly, again by choosing $\eta$ small enough, one can show that there exists a constant $C=C(\Lambda)$ such that
\begin{equation}\label{eq:Qy}
 |y-x|\geq C(|\delta(x)-\delta(y)|+|\overline{Q_y}|), \ \ y\in D_3\cap\omega,
\end{equation}
see \cite[Eq. (3.8)]{semilinear_cvw}. Therefore, by \eqref{eq:J_D estimate} we have that
\begin{align*}
 J_3^2&\leq\int_{D_3\cap w}U(y)J_D(x,y)dy\lesssim\de(x)\int_{D_3\cap w}U(y)j(|y-x|)\de(y)dy\nonumber\\
 &\lesssim\de(x)\int_{D_3\cap w}\psi(V(\delta(y)))\frac{\phi((|\delta(x)-\delta(y)|+|\overline{Q_y}|)^{-2})}{(|\delta(x)-\delta(y)|+|\overline{Q_y}|)^{d+2}}\de(y)dy\nonumber\\
 &\asymp\de(x)\int_{0}^{\delta(x)/2}\psi(V(t))t\int_0^R\frac{\phi((|\delta(x)-t|+s)^{-2})}{(|\delta(x)-t|+s)^{d+2}}s^{d-2}dsdt\nonumber\\
 &\lesssim\de(x)\int_{0}^{\delta(x)/2}\psi(V(t))t\frac{\phi((\delta(x)-t)^{-2})}{|\delta(x)-t|^3}dt\asymp \frac{\phi(\delta(x)^{-2})}{\delta(x)^2}\int_{0}^{\delta(x)/2}\psi(V(t))tdt,
\end{align*}
where in the second line we used \eqref{eq:Qy}, \eqref{eq:sharp bnd jump kern} and the fact that $j$ is decreasing, and in the first term in the fourth line we used \cite[Eq. (A.10) \& (A.15)]{Bio23}.
By a change of variable as in the proof of Lemma \ref{l:supersol integrability}, we get that 
\begin{align}
J_3^2&\lesssim \frac{\phi(\delta(x)^{-2})}{\delta(x)^2}\int_{\psi(V(\de(x)/2)))}^\infty \frac{ds}{{\phi^{-1}(\varphi(s)^{-2}))}}\nonumber\\
\overset{\eqref{eq:KO2}}&{\lesssim} \frac{\phi(\delta(x)^{-2})}{\delta(x)^2}\frac{\psi(V(\de(x))))}{{\phi^{-1}(\varphi(\psi(V(\de(x))))^{-2})}}\nonumber\\
&\asymp\phi(\delta(x)^{-2})\psi(V(\delta(x)))\overset{\eqref{e:V-phi}}{\asymp} \psi(V(\delta(x)))V(\delta(x))^{-2}.\label{eq:J32}\nonumber
\end{align}
The result now follows by applying \eqref{eq:f2}, since 
\begin{equation*}
\frac{\psi(V(\delta(x)))}{V(\delta(x))^2}=\frac{U(x)}{\varphi(U(x))^2}\asymp f(U(x)).
\end{equation*}

\end{proof}

\begin{lem}\label{l:supersol lem2}
	Assume that a function $u\in\LLL$ satisfies $\Lo u\in L^\infty_{loc}(D)$ and 
	\begin{align*}
		\Lo u(x)\ge - Cf(u(x)),\quad \de(x)< \eta ,
	\end{align*}
    for some $C>0$ and $\eta>0$. Then, there exists a function $\overline u\in \LLL$ such that 
	\begin{align*}
		\Lo \overline u(x)\ge - f(\overline u(x)),\quad \text{in $D$.}
	\end{align*}
\end{lem}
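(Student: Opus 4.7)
The plan is to take $\overline u := \lambda u$ for a sufficiently large constant $\lambda \geq 1$. The $\LLL$-integrability of $\overline u$ is then immediate, and the key algebraic tool is the scaling property implied by \ref{F}, which makes $t \mapsto f(t) t^{-(1+m)}$ nondecreasing on $(0,\infty)$ and hence gives
\[
f(\lambda t) \geq \lambda^{1+m} f(t), \qquad \lambda \geq 1, \ t > 0.
\]

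For $x$ in the boundary strip $\{\de < \eta\}$, linearity of $\Lo$ together with the hypothesis and the scaling above yield
\[
\Lo \overline u(x) = \lambda \Lo u(x) \geq -C\lambda f(u(x)) \geq -C\lambda^{-m} f(\lambda u(x)) = -C\lambda^{-m} f(\overline u(x)),
\]
so the desired inequality $\Lo \overline u \geq -f(\overline u)$ holds in this region as soon as $\lambda \geq C^{1/m}$. Here I rely on positivity of $u$ near the boundary, which is automatic in the intended application of the lemma (where $u = \psi(V(\de))$ from Lemma \ref{l:supersol lem1}, so $u > 0$ throughout $D$).

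In the interior $K_\eta := \{x \in D : \de(x) \geq \eta\}$, which is a compact subset of $D$ since $D$ is bounded $C^{1,1}$, the assumption $\Lo u \in L^\infty_{loc}(D)$ provides $|\Lo u| \leq M$ on $K_\eta$ for some $M > 0$. Applying Theorem \ref{t:regularity SKBM} (with right-hand side in $L^\infty_{loc}$) supplies a continuous representative for $u$ on $K_\eta$, so that $c_0 := \inf_{K_\eta} u > 0$ in the intended application. Monotonicity of $f$ combined with the scaling inequality then gives
\[
f(\lambda u(x)) \geq f(\lambda c_0) \geq \lambda^{1+m} f(c_0), \qquad x \in K_\eta,
\]
and hence $\lambda M \leq f(\lambda u(x))$ whenever $\lambda^m \geq M / f(c_0)$. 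Choosing $\lambda$ to satisfy both thresholds simultaneously then yields $\Lo \overline u \geq -f(\overline u)$ on all of $D$.

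The main technical point is the uniform positivity of $u$ on compact subsets of $D$, which is essential for the scaling inequality to produce a useful lower bound on $f(\overline u)$ in the interior. Positivity is immediate in the intended application to $u = \psi(V(\de))$; in a more general setting, one would instead work with $\overline u = \lambda u + h$ for some carefully chosen positive correction $h$, with minor additional bookkeeping to control the near-boundary contribution of $\Lo h$ against the existing supersolution estimate.
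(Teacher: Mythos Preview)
Your argument is correct for the intended application, but it takes a genuinely different route from the paper in the interior region. The paper sets $\overline u = \mu\,\GDFI\1 + \lambda u$ with $\mu = \lambda\|\Lo u\|_{L^\infty(D\setminus D_\eta)}$: since $\Lo(\GDFI\1)=\1$ pointwise, one obtains $\Lo\overline u = \mu + \lambda\Lo u \ge 0 \ge -f(\overline u)$ on $D\setminus D_\eta$ directly, without ever appealing to a lower bound on $u$ there; on $D_\eta$ the added term is harmless by monotonicity of $f$ and $\overline u \ge \lambda u$. Your pure-scaling approach $\overline u = \lambda u$ is more elementary but buys this simplicity at the cost of needing $\inf_{K_\eta} u > 0$, which, as you note, is an extra hypothesis not stated in the lemma (though satisfied by $U = \psi(V(\de))$). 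The additive correction you sketch at the end as a remedy for the general case is precisely the paper's construction, and has the advantage that no regularity theorem or continuity argument is needed in the interior---the Green potential of a constant handles everything.
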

\begin{proof}
	Let $ D_\eta \coloneqq \{x\in D: \de(x)< \eta\}$. Take $\lambda>0$, hence
	\begin{align*}
		\Lo (\lambda u)=\lambda \Lo u\ge -\lambda C f(u),\quad \text{in $D_\eta$}.
	\end{align*}
Recall that, by \ref{F}, $t\mapsto f(t)t^{-1-m}$ is nondecreasing, so by choosing $\lambda$ large enough we get $f(\lambda u)\ge \lambda C f(u)$.
    Hence, 
	\begin{align*}
		\Lo (\lambda u)\ge -f(\lambda u),\quad \text{in $D_\eta$}.
	\end{align*}
	Let now $\mu\coloneqq\lambda||\Lo u||_{L^\infty(D\setminus D_\eta)}$ and define $\overline u\coloneqq \mu\GDFI\1 +\lambda u$. By \cite[Proposition 2.14]{Bio23} and by Theorem \ref{t:regularity SKBM} we have $\Lo \GDFI \1=\1$ in $D$ in the distributional and in the pointwise sense. Hence, on $D_\eta$ we have
	\begin{align*}
		\Lo \overline u = \mu + \lambda \Lo u\ge \lambda \Lo u\ge - f(\lambda u)\ge - f(\overline u),
	\end{align*}
	and on $D\setminus  D_\eta$  we have
	\begin{align*}
		\Lo \overline u = \mu + \lambda \Lo u\ge 0\ge - f(\overline u).
	\end{align*}
	In other words, $\Lo\overline u\ge -f(\overline u)$ in $D$.
\end{proof}

\begin{cor}\label{c:large supersolution}
	Let $f$ satisfy \ref{F} and \eqref{eq:KO2}. Then there is a function $\overline u\in \LLL \cap C^{1,1}(D)$ such that 
	\begin{align*}
		\Lo\overline u\ge -f(\overline u),\quad \text{in $D$},
	\end{align*}
	both in the distributional and pointwise sense. Furthermore, if the function $\psi$ from \eqref{eq:psi_definition} satisfies
 \begin{equation}\label{eq:bdry_condition}
 \lim_{s\to 0+}\frac{\psi(s)}{s^2\phi^{-1}(s^{-2})}=\infty,
 \end{equation}
  then 
	\begin{align*}
		\lim_{x\to \partial D}\frac{\overline u(x)}{\PDFI\sigma(x)}=\infty.
	\end{align*}
\end{cor}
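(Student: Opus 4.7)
Set $U(x):=\psi(V(\delta_D(x)))$ on $D$. The plan is to verify $U$ meets the hypotheses of Lemmas \ref{l:supersol lem1} and \ref{l:supersol lem2}, read off $\overline u$ from the latter, and finally extract the boundary asymptotics using \eqref{eq:bdry_condition}.

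First I would check regularity. The assumption \ref{F} yields $\psi\in C^2(0,\infty)$ (compute $\varphi'(t)=-F(t)^{-1/2}$, $\varphi''(t)=f(t)/(2F(t)^{3/2})$, and invert), while $V\in C^2(0,\infty)$ by \cite[Lemma 2.5]{KKLL} and $\delta_D\in C^{1,1}(\overline D)$ by \cite[Theorem 8.4, Chapter 7]{DZ}; composition gives $U\in C^{1,1}(D)$. Since \eqref{eq:KO2} implies \eqref{eq:KO1}, Lemma \ref{l:supersol integrability} gives $U\in\LLL$, and Remark \ref{r:pointwise solution} then ensures that $\Lo U$ is well defined via \eqref{eq:Lo pointwisely} and locally bounded in $D$. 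Lemma \ref{l:supersol lem1} supplies constants $C,\eta>0$ with $\Lo U\geq -Cf(U)$ on $\{\delta_D<\eta\}$, and Lemma \ref{l:supersol lem2} produces $\overline u=\mu\,\GDfi\1+\lambda U\in\LLL$ (for suitable $\lambda,\mu>0$) with $\Lo\overline u\geq -f(\overline u)$ on $D$. For the $C^{1,1}(D)$ regularity, observe that $\Lo(\GDfi\1)=\1$ by \cite[Proposition 2.14]{Bio23}, so iterating Theorem \ref{t:regularity SKBM} with the constant right-hand side yields $\GDfi\1\in C^\infty(D)$; combined with $U\in C^{1,1}(D)$ this delivers $\overline u\in C^{1,1}(D)$. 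The pointwise/distributional equivalence of $\Lo\overline u\geq -f(\overline u)$ follows from Remark \ref{r:pointwise solution}.

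For the boundary behaviour I would handle the two summands of $\overline u$ separately. Integrating \eqref{eq:Green function sharp estimate} (splitting into the near-diagonal and far regions) yields $\GDfi\1(x)\lesssim \phi(\delta_D(x)^{-2})^{-1}$, hence by \eqref{eq:PDFI_sigma}
\[
\frac{\GDfi\1(x)}{\PDfi\sigma(x)}\lesssim \delta_D(x)^2\longrightarrow 0\qquad\text{as }x\to\partial D.
\]
For the dominant term set $s:=V(\delta_D(x))\to 0^+$ as $x\to\partial D$. By \eqref{e:V-phi}, $s\asymp \phi(\delta_D(x)^{-2})^{-1/2}$, equivalently $\delta_D(x)^{-2}\asymp \phi^{-1}(s^{-2})$, so that
\[
\delta_D(x)^2\,\phi(\delta_D(x)^{-2})\asymp \frac{1}{s^2\,\phi^{-1}(s^{-2})}.
\]
Since \eqref{eq:psi2} forces $\psi(a)\asymp\psi(b)$ whenever $a\asymp b$, combining with \eqref{eq:PDFI_sigma} gives
\[
\frac{U(x)}{\PDfi\sigma(x)}\asymp \frac{\psi(s)}{s^2\,\phi^{-1}(s^{-2})}\longrightarrow\infty
\]
by \eqref{eq:bdry_condition}, and the same limit is inherited by $\overline u/\PDfi\sigma$.

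\textbf{Main obstacle.} All three auxiliary lemmas are already in place, so the real work is the final change of variable: translating the quotient $U/\PDfi\sigma$ into the exact form $\psi(s)/(s^2\phi^{-1}(s^{-2}))$ appearing in \eqref{eq:bdry_condition}. The auxiliary estimate $\GDfi\1(x)\lesssim\phi(\delta_D(x)^{-2})^{-1}$, needed so that the lift-term $\mu\GDfi\1$ cannot pollute the blow-up, is routine but requires a careful decomposition of \eqref{eq:Green function sharp estimate} into near- and far-diagonal contributions.
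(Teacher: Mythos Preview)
Your proposal is correct and follows essentially the same approach as the paper: invoke Lemmas \ref{l:supersol lem1} and \ref{l:supersol lem2} to obtain $\overline u=\mu\,\GDfi\1+\lambda\,\psi(V(\de))$, then read off the boundary blow-up via the substitution $s=V(\de(x))$ together with \eqref{e:V-phi} and \eqref{eq:PDFI_sigma}. You supply considerably more detail than the paper's two-line proof (the $C^{1,1}$ regularity of $\GDfi\1$ via bootstrapping, and the separate treatment of the bounded lift term $\mu\,\GDfi\1$ in the boundary analysis), but these are natural elaborations rather than a different route.
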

\begin{proof}
    By Lemmas \ref{l:supersol lem1} and \ref{l:supersol lem2} we have that $\overline u$ is of the form $\overline u=\mu \GDFI\1+\lambda \psi(V(\de))$.
    The boundary behaviour follows directly from  \eqref{e:V-phi}, \eqref{eq:PDFI_sigma} and \eqref{eq:bdry_condition} by substituting $s=V(\delta(x))$.  
\end{proof}
\begin{rem}
 Note that in the standard fractional setting, i.e. for $\phi(t)=t^{\frac \alpha 2}$, $\alpha\in(0,2)$, and for $f(t)=t^p$, the Keller-Osserman-type conditions \eqref{eq:KO1} and \eqref{eq:KO2} are equivalent to $p>1+\tfrac \alpha 2$. Further, conditions \eqref{eq:integrability} and \eqref{eq:bdry_condition} are equivalent to $p<\tfrac 2{2-\alpha}$. In the more general setting of this article, the conditions \eqref{eq:KO1}, \eqref{eq:KO2}, \eqref{eq:integrability}, and \eqref{eq:bdry_condition} are independent.
\end{rem}

\subsection{Existence of a large solution}\label{ss:existence}
 In what follows we assume conditions \ref{F}, \eqref{eq:integrability}, \eqref{eq:KO2} and \eqref{eq:bdry_condition} on the functions $f$ and $\phi$, as well as $f\in C^{1 \vee (2\delta_2-2\delta_1+\varepsilon)}$ for some $\varepsilon>0$. Let us recall the sequence $(u_j)_j$, where $u_j=-\GDFI f(u_j)+j\PDFI\sigma$ is the weak solution to the approximated problem \eqref{eq:semi-approx}. Note that due to Lemma \ref{l:approx sol. point.} it is also a solution in the pointwise sense. Further, by Lemma \ref{l:approx incre} $(u_j)_j$ increases so the limit
\begin{align}\label{eq:large solution}
	u(x)=\lim_{j\to\infty} u_j(x),\quad x\in D,
\end{align}
so the function $u$ on $D$ is well defined but possibly infinite. However, due to the following uniform upper bound on $(u_j)_j$ in terms of the supersolution $\overline u=\mu \GDFI\1+\lambda \psi(V(\de))$ from Corollary \ref{c:large supersolution}, $u$ is locally bounded on $D$.

\begin{lem}\label{l:large finite}
	For every $j\in \N$ it holds that $0\le u_j\le \overline u$ in $D$. In particular, $0\le u\le \overline u$ and $u\in \LLL\cap L^\infty_{loc}(D)$.
\end{lem}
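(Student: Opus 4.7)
Nonnegativity $u_j\ge 0$ is built into the construction of the weak solution recalled before Lemma \ref{l:approx incre}, so the core task is the upper bound $u_j\le\overline u$. Once this is in hand, the ``In particular'' statement follows at once: $u=\lim_j u_j$ is well-defined by monotonicity (Lemma \ref{l:approx incre}), $0\le u_1\le u\le \overline u$, and $\overline u\in\LLL\cap L^\infty_{loc}(D)$ by Corollary \ref{c:large supersolution} transfers to $u$.

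The plan for $u_j\le\overline u$ is a pointwise strong maximum principle applied to the difference $w:=u_j-\overline u$. The first observation is that $w^+$ has compact support in $D$. By Lemma \ref{l:approx sol. point.} we have $u_j/\PDFI\sigma\to j$ as $x\to\partial D$, whereas Corollary \ref{c:large supersolution} gives $\overline u/\PDFI\sigma\to\infty$. Hence there exists $\eta'>0$ such that $\overline u>u_j$ on $\{\de<\eta'\}$, so $w\le 0$ in this boundary strip. Both $u_j$ (continuous in $D$ by \cite[Theorem 5.10]{Bio23}) and $\overline u$ (in $C^{1,1}(D)$ by Corollary \ref{c:large supersolution}) are continuous, so $M:=\sup_D w$ is attained at some $x_0\in\{\de\ge\eta'\}\subsub D$ whenever $M>0$.

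The main step is then to argue by contradiction, supposing $M>0$, and to evaluate $\Lo w$ at $x_0$ in two different ways. The standing regularity assumption $f\in C^{1\vee(2\delta_2-2\delta_1+\varepsilon)}$ together with Lemma \ref{l:approx sol. point.} upgrades $u_j$ to $C^{2\delta_2+\varepsilon}(D)$, so by Remark \ref{r:pointwise solution} the pointwise identity \eqref{eq:Lo pointwisely} is valid for $\Lo u_j$ at every interior point, and the same applies to $\overline u\in C^{1,1}(D)\cap\LLL$. Since $x_0$ is a global maximum of $w$, the integrand $[w(x_0)-w(y)]J_D(x_0,y)$ appearing in \eqref{eq:Lo pointwisely} is pointwise nonnegative; together with $\kappa(x_0)w(x_0)\ge 0$ this yields $\Lo w(x_0)\ge 0$. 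On the other hand, combining $\Lo u_j(x_0)=-f(u_j(x_0))$ with the supersolution inequality $\Lo\overline u(x_0)\ge -f(\overline u(x_0))$ and monotonicity of $f$ at the point $u_j(x_0)>\overline u(x_0)$ gives $\Lo w(x_0)\le -f(u_j(x_0))+f(\overline u(x_0))\le 0$.

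Combining the two, $\Lo w(x_0)=0$, forcing both the integral term and $\kappa(x_0)w(x_0)$ to vanish. Since $J_D(x_0,\cdot)>0$ on $D$ by \eqref{eq:J_D estimate}, the vanishing of the integral together with continuity of $w$ forces $w\equiv M>0$ in $D$, contradicting $w<0$ on $\{\de<\eta'\}$. The principal obstacle I anticipate is ensuring that \eqref{eq:Lo pointwisely} can be used pointwise at $x_0$ simultaneously for $u_j$ and $\overline u$; this is exactly what the regularity of $u_j$ furnished by Lemma \ref{l:approx sol. point.} and the $C^{1,1}$ regularity of $\overline u$ from Corollary \ref{c:large supersolution} provide, and moreover the cancellation in $w=u_j-\overline u$ makes the integral at $x_0$ absolutely convergent so no subtlety with the principal value arises.
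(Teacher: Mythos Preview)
Your proposal is correct and follows essentially the same maximum-principle route as the paper: show $w=u_j-\overline u\to-\infty$ at $\partial D$ so that any positive supremum is attained at an interior point $x_0$, then combine the pointwise supersolution inequality $\Lo(\overline u-u_j)\ge f(u_j)-f(\overline u)$ with the sign of $\Lo w(x_0)$ at a global maximum to reach a contradiction. The paper closes slightly more directly---the strict positivity $\kappa(x_0)w(x_0)>0$ already yields $\Lo w(x_0)>0$ against $\Lo w(x_0)\le f(\overline u(x_0))-f(u_j(x_0))\le 0$---so your detour through ``$\Lo w(x_0)=0$ forces $w\equiv M$'' is not needed, but it is harmless.
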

\begin{proof}
Fix $j\in\N$. By Lemma \ref{l:approx sol. point.} and by Corollary \ref{c:large supersolution} we have that $u_j\le \overline u$ on $D_\eta$ for some $\eta =\eta(j)>0$ small enough and
\begin{align}\label{eq:large finite eq1}
	\Lo(\overline u - u_j)\ge f(u_j)-f(\overline u),
\end{align}
pointwisely in $D$. Since $u_j-\overline u\in C(D)$ and $\lim_{x\to \partial D} u_j-\overline u=-\infty$, there exists a $x_0\in D$ such that $\max_{x\in D}\big(u_j(x)-\overline u(x)\big)=u_j(x_0)-\overline u(x_0)\eqqcolon m$. Suppose that $m> 0$. Then, since $x_0$ is the point of maximum of $u_j-\overline u$, by \eqref{eq:large finite eq1} and the monotonicity of $f$, we have
\begin{align*}
	0&{<} \textrm{\textrm{P.V.}}\int_D\big(u_j(x_0)-\overline u(x_0)-u_j(y)+\overline u(y)\big)J_D(x_0,y)+\kappa(x_0)\big(u_j(x_0)-\overline u(x_0)\big)\\
	&=\Lo\big(u_j-\overline u\big)(x_0){\le  f(\overline u(x_0))-f(u_j(x_0))\le} 0,
\end{align*}
which is a contradiction. Hence, $m\le 0$, i.e. $u_j\le \overline u$ in $D$. {Passing} to the limit, we get $u\le \overline u$ in $D$. Since $\overline u\in \LLL \cap C^{1,1}(D)$, we also have $u\in \LLL \cap L^\infty_{loc}(D)$.
\end{proof}

\begin{thm}\label{t:large solution}
	 The function $u\in \LLL$ defined by \eqref{eq:large solution} is a distributional and a pointwise    solution to the semilinear problem
	\begin{equation}\label{eq:large problem in thm}
		\begin{array}{rcll}
			\Lo u&=& - f(u)& \quad \text{in } D,\\
			\frac{u}{\PDFI\sigma}&=&\infty&\quad \text{on }\partial D.
		\end{array}
	\end{equation}
\end{thm}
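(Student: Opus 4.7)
The starting point is Lemma \ref{l:large finite}, which already tells me that $u$ is well-defined with $0\le u\le \overline u$ and $u\in \LLL\cap L^\infty_{loc}(D)$. Three tasks remain: the boundary blow-up, the distributional equation $\Lo u=-f(u)$, and the upgrade to a pointwise solution. The boundary condition is the easy part: since $u\ge u_j$ for every $j\in\N$, Lemma \ref{l:approx sol. point.} gives
\[
\liminf_{D\ni x\to z}\frac{u(x)}{\PDFI\sigma(x)}\ge \lim_{D\ni x\to z}\frac{u_j(x)}{\PDFI\sigma(x)}=j,\qquad z\in\partial D,
\]
and letting $j\to\infty$ yields the pointwise blow-up, which in turn implies the weak $L^1$ boundary condition \eqref{eq:distri solution boundary} as noted in Subsection \ref{ss:dirichlet problem}.

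For the distributional equation, I plan to pass to the limit $j\to\infty$ in the identity
\[
\int_D u_j(x)\Lo\varphi(x)\,dx=-\int_D f(u_j(x))\varphi(x)\,dx,\qquad \varphi\in C_c^\infty(D),
\]
which holds for each $j$ because every weak solution is also a distributional one. Dominated convergence is justified on the left by the bound $0\le u_j\le \overline u\in\LLL$ together with $|\Lo\varphi|\lesssim \de$ (from \cite[Lemma 2.11]{Bio23}), so that $\overline u\,|\Lo\varphi|$ is an integrable majorant. On the right, $|f(u_j)\varphi|\le f(\overline u)\,\|\varphi\|_\infty \1_{\supp\varphi}$, and since $\overline u\in C^{1,1}(D)$ is bounded on the compact set $\supp\varphi\subsub D$, the majorant is integrable. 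Hence $\Lo u=-f(u)$ in the distributional sense in $D$.

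For the pointwise equation I would bootstrap the regularity of $u$ via Theorem \ref{t:regularity SKBM}. Starting from $u\in\LLL$ with $f(u)\in L^\infty_{loc}(D)$, the $L^\infty$ branch of Theorem \ref{t:regularity SKBM} gives $u\in C^\beta_{loc}(D)$ for every $\beta\in(0,2\delta_1)$. Since $f\in C^{1\vee(2\delta_2-2\delta_1+\varepsilon)}$, the composition $f\circ u$ inherits this H\"older regularity, and the H\"older branch of Theorem \ref{t:regularity SKBM} then raises the exponent by $2\delta_1$; iterating finitely many times one reaches $u\in C^{2\delta_2+\varepsilon'}_{loc}(D)$ for some $\varepsilon'>0$. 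Remark \ref{r:pointwise solution} then converts the distributional identity into the pointwise one, completing the proof. The principal technical point, I expect, is the dominated convergence step for the distributional equation, which crucially depends on the boundary-weighted bound $|\Lo\varphi|\lesssim \de$ to pair with the $\LLL$-integrability of the majorant $\overline u$; without this matching, the nonlocal tail of $\Lo\varphi$ would obstruct the limit.
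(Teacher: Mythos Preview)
Your proposal is correct and follows essentially the same approach as the paper's proof: dominated convergence on both sides of the distributional identity using $0\le u_j\le\overline u\in\LLL$, $|\Lo\varphi|\lesssim\de$, and the local boundedness of $f(\overline u)$ on $\supp\varphi$, followed by the same regularity bootstrap via Theorem \ref{t:regularity SKBM} and Remark \ref{r:pointwise solution}, with the boundary blow-up obtained from $u\ge u_j$ and Lemma \ref{l:approx sol. point.}. The only differences are the order of the steps and your explicit remark that the weak $L^1$ boundary condition follows from the pointwise one, which the paper also notes.
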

\begin{proof}
	Recall that the function $u_j$ is a weak solution to \eqref{eq:semi-approx}, hence also a solution in the distributional sense, so
    \begin{align*}
		\int_D u_j(x) \Lo\xi(x)dx=\int_D f(u_j(x)) \xi(x)dx,\quad \xi\in C_c^\infty(D),\,j\in \N.
	\end{align*}
    The fact that $u$ is the solution in the distributional sense of \eqref{eq:large problem in thm} easily follows from the dominated convergence theorem. Specifically, since $0\le u_j\le u\le \overline{u}$ we have that
	\begin{align}\label{eq:large distrib eq1}
		\int_D u_j(x) \Lo\xi(x)dx\overset{j\to \infty}{\longrightarrow}		\int_D u(x) \Lo\xi(x)dx,
	\end{align}
	where we used that $|\Lo \xi|\lesssim \de$ by \cite[Lemma 2.11]{Bio23} and $\overline{u}\in \LLL$ by Corollary \ref{c:large supersolution}. On the other hand, the convergence
	\begin{align}\label{eq:large distrib eq2}
		\int_D f(u_j(x)) \xi(x)dx\overset{j\to \infty}{\longrightarrow}		\int_D f(u(x)) \xi(x)dx
	\end{align}
	follows from the monotonicity of $f$ and local boundedness of $\overline u$. Since $\Lo u= - f(u)$ in $D$ in the distributional sense and $f(u)\in L^{\infty}_{loc}(D)$ by Lemma \ref{l:large finite}, by a bootstrap of a Theorem \ref{t:regularity SKBM} we get that $u\in C^{2\delta_2+\varepsilon}(D)$. Hence, $u$ solves $\Lo u= - f(u)$ also in the pointwise sense. The required  pointwise  boundary behaviour of $u$ follows directly from
	\begin{align*}
		\liminf_{x\to\partial D}\frac{u(x)}{\PDFI\sigma(x)}\ge \liminf_{x\to\partial D}\frac{u_j(x)}{\PDFI\sigma(x)}=j, \qquad \forall\,j\in\N.
	\end{align*}
    The pointwise boundary behaviour implies the weak $L^1$ boundary behaviour in the sense of \eqref{eq:distri solution boundary}. 
\end{proof}

\begin{rem}
    Note that for the obtained solution $u$, it does not hold that $f(u)\in \LLL$. Indeed, since $u_j+\GDFI\big(f({u_j})\big)=j\PDFI\sigma$, and $\uparrow \lim_ju_j=u<\infty$ and $\lim_j j\PDFI\sigma=\infty$, by the monotonicity of $u_j$ we have $\lim_j\GDFI\big(f({u_j})\big)=\GDFI\big(f({u})\big)=\infty$. Hence, by \cite[Lemma 2.8]{Bio23} it follows $f(u)\not\in \LLL$.
\end{rem}
Our obtained large solution is the minimal solution to \eqref{eq:large problem in thm} as we prove in the next theorem.
\begin{thm}\label{t:minimal large sol}
    The solution $u$ from Theorem \ref{t:large solution} is the minimal locally bounded distributional and pointwise solution to the problem \eqref{eq:large problem in thm} such that the boundary condition on $\partial D$ holds pointwisely.
\end{thm}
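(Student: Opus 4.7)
The plan is to show that any locally bounded distributional/pointwise solution $v$ to \eqref{eq:large problem in thm} (with the pointwise boundary condition) dominates every member $u_j$ of the approximating sequence from \eqref{eq:semi-approx}; letting $j\to\infty$ then yields $u\le v$. This is a comparison-principle argument modeled on the proof of Lemma~\ref{l:large finite}.

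First, I would upgrade both boundary conditions to uniform statements. Since $\partial D$ is compact, the pointwise limit $v/\PDFI\sigma \to \infty$ is automatically uniform by a standard $3\varepsilon$ argument (exactly as in the beginning of the proof of Lemma~\ref{l:large solution harmonic unbounded}); and by the computation in Lemma~\ref{l:approx sol. point.}, the ratio $u_j/\PDFI\sigma$ converges uniformly to $j$ on $\partial D$. Fix $j\in\N$. Combining these, I can choose $\eta>0$ small enough that $v(x)\ge(j+1)\PDFI\sigma(x)$ and $u_j(x)\le(j+\tfrac12)\PDFI\sigma(x)$ for every $x\in D_\eta:=\{\de<\eta\}$, so in particular $w:=u_j-v<0$ throughout $D_\eta$.

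The function $w$ is continuous on $D$: both $u_j$ and $v$ are locally bounded pointwise solutions with $\Lo u_j,\Lo v\in L^\infty_{loc}(D)$, so Theorem~\ref{t:regularity SKBM} places them in $C^{2\delta_2+\varepsilon}_{loc}(D)$, which is also enough to evaluate \eqref{eq:Lo pointwisely} on each of them (Remark~\ref{r:pointwise solution}). Consequently $w$ attains its supremum on the compact set $\overline{D\setminus D_\eta}\subset D$. Suppose for contradiction that $w(x_0)=\max_D w>0$. Then $u_j(x_0)>v(x_0)$, so monotonicity of $f$ gives
\begin{equation*}
\Lo w(x_0)=-f(u_j(x_0))+f(v(x_0))\le 0.
\end{equation*}
On the other hand, at the global maximum $x_0$ the pointwise representation \eqref{eq:Lo pointwisely} yields
\begin{equation*}
\Lo w(x_0)=\textrm{P.V.}\!\int_D (w(x_0)-w(y))J_D(x_0,y)\,dy+\kappa(x_0)w(x_0)\ge \kappa(x_0)w(x_0)>0,
\end{equation*}
since $\kappa>0$ on $D$ by \eqref{e:J_D} and the boundedness of $D$. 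This contradiction forces $w\le 0$, i.e.\ $u_j\le v$ on $D$. Passing to the limit $j\to\infty$ in \eqref{eq:large solution} gives $u\le v$.

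The only delicate point is the uniform boundary control that confines the maximum of $w$ to a compact subset of $D$; once this is established, the argument reduces to the same maximum-principle computation already used in Lemma~\ref{l:large finite}, and the regularity needed to legitimately apply the pointwise formula \eqref{eq:Lo pointwisely} at $x_0$ is free of charge from Theorem~\ref{t:regularity SKBM} applied to any locally bounded pointwise solution.
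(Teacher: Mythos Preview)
Your proposal is correct and follows essentially the same route as the paper: the authors simply say ``by repeating the proof of Lemma~\ref{l:large finite} we get that $u_j\le U$ in $D$, hence by taking the limit we have $u\le U$.'' You have spelled out that repetition in detail---the uniform boundary control via compactness of $\partial D$, the regularity bootstrap through Theorem~\ref{t:regularity SKBM} to justify the pointwise formula for $w=u_j-v$, and the maximum-principle contradiction at $x_0$---all of which matches the mechanism of Lemma~\ref{l:large finite}.
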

\begin{proof}
    Let $U$ be any locally bounded distributional (and by bootstrapping Theorem \ref{t:regularity SKBM} a pointwise) solution to \eqref{eq:large problem in thm}. By repeating the proof of Lemma \ref{l:large finite} we get that $u_j\le U$ in $D$, hence by taking the limit we have $u\le U$.
\end{proof}

In the next theorem, we prove that for every supersolution, there exists a maximal solution that is smaller than the supersolution. However, this does not prove that there are many large solutions, i.e. the important questions of the uniqueness of the solution to \eqref{eq:large problem in thm}, as well as the true maximality (i.e. the existence of a solution which is larger than any other solution) remain open.

\begin{thm}\label{t:maximal large sol}
    For every locally bounded pointwise and distributional supersolution $\overline v$ of the problem \eqref{eq:large problem in thm}, there exist the maximal solution of \eqref{eq:large problem in thm} smaller then $\overline v$.
\end{thm}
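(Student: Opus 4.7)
The plan is to construct the maximal solution via a Perron-type argument applied to the family
\[
\mathcal{F} := \{w \in \LLL \cap L^\infty_{loc}(D) : w \text{ solves \eqref{eq:large problem in thm} in the pointwise and distributional sense, and } w \leq \overline v\}.
\]
The first task is to verify that $\mathcal{F}$ is non-empty. The proof of Lemma \ref{l:large finite} goes through verbatim if one replaces the explicit supersolution from Corollary \ref{c:large supersolution} by the given $\overline v$: the maximum-principle argument applied to $u_j - \overline v$, combined with the pointwise inequality $\Lo(\overline v - u_j) \geq f(u_j) - f(\overline v)$ and the fact that $u_j/\PDfi\sigma = j < \infty = \overline v/\PDfi\sigma$ on $\partial D$, forces $u_j \leq \overline v$ for every $j$. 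Passing to the limit yields $u \leq \overline v$, so the minimal solution $u$ of Theorem \ref{t:large solution} lies in $\mathcal{F}$. Moreover, by Theorem \ref{t:minimal large sol}, every $w \in \mathcal{F}$ satisfies $u \leq w \leq \overline v$.

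The second and central step is to show that $\mathcal{F}$ is upward directed: for any $w_1,w_2 \in \mathcal{F}$ there exists $w_3 \in \mathcal{F}$ with $w_3 \geq w_1 \vee w_2$. The pointwise maximum $W_0 := w_1 \vee w_2$ is a pointwise subsolution of $\Lo W_0 \leq -f(W_0)$: at any $x$ where $w_1(x) \geq w_2(x)$ (say), the inequality $W_0(x)-W_0(y) \leq w_1(x)-w_1(y)$ for every $y \in D$ together with \eqref{eq:Lo pointwisely} yields $\Lo W_0(x) \leq \Lo w_1(x) = -f(W_0(x))$. One then produces $w_3$ in the interval $[W_0,\overline v]$ via a monotone iteration adapted from Subsection \ref{ss:existence}: with $z_0 := W_0$, define $z_{n+1}$ as the solution of the linearised problem $\Lo z_{n+1} + \lambda z_{n+1} = \lambda z_n - f(z_n)$ with $\lambda$ chosen large on the relevant range (built by finite-boundary-data approximations with boundary values $j \in \N$ as in the construction of the $u_j$'s, and passing $j \to \infty$). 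Comparison via Kato's inequality \cite[Proposition 5.4]{Bio23} and the argument of Lemma \ref{l:large finite} guarantee $W_0 \leq z_n \leq z_{n+1} \leq \overline v$. Dominated convergence (using $\overline v \in \LLL$) and Theorem \ref{t:regularity SKBM} then identify the limit $w_3$ as an element of $\mathcal{F}$ with $w_3 \geq W_0$.

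With $\mathcal{F}$ non-empty and upward directed, set $u^* := \sup_{w \in \mathcal{F}} w$. Since all elements are dominated by the locally bounded $\overline v$, upward directedness together with separability of $D$ permits the extraction of a nondecreasing sequence $(w_n) \subset \mathcal{F}$ with $w_n \uparrow u^*$. Passing to the limit exactly as in the proof of Theorem \ref{t:large solution} — dominated convergence for the weak identity $\int u^* \Lo\xi = \int f(u^*)\xi$ for $\xi \in C_c^\infty(D)$ (justified since $|\Lo\xi|\lesssim\de$ and $\overline v \in \LLL$), together with monotone convergence for $f(w_n) \uparrow f(u^*)$ and the bootstrap regularity of Theorem \ref{t:regularity SKBM} — shows that $u^*$ is a pointwise and distributional solution of $\Lo u^* = -f(u^*)$ in $D$. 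The boundary condition $u^*/\PDfi\sigma \to \infty$ is inherited from $u \leq u^*$. Hence $u^* \in \mathcal{F}$ and is maximal by construction.

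The principal obstacle is the second step: the monotone iteration for a semilinear nonlocal equation with an \emph{infinite} boundary blow-up shared by $W_0$, the iterates $z_n$, and the upper barrier $\overline v$. One cannot cleanly split the iterates into a fixed singular part plus a regular remainder, since the singular behaviour of the iterates must be allowed to adjust. The proposed remedy is a two-parameter approximation scheme — indexed by the finite boundary data $j \in \N$ (as in \eqref{eq:semi-approx}) and the linearisation constant $\lambda$ — with Kato's inequality securing the two comparisons needed at each step (dominance from below by $W_0$ and from above by $\overline v$), followed by passage $j \to \infty$ along the lines of the proof of Theorem \ref{t:large solution}.
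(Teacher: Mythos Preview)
Your proposal is correct in outline but takes a substantially harder route than the paper, and the hardest step is only sketched.

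The paper bypasses your upward-directedness step entirely by applying Zorn's lemma to the set $\KK$ of distributional solutions dominated by $\overline v$. The non-emptiness argument is identical to yours. The only additional ingredient is that every \emph{chain} (totally ordered subset) $\{v_i\}_{i\in I}$ has an upper bound in $\KK$: set $v=\sup_i v_i$, extract an increasing sequence $(v_n)$ with $v_n\uparrow v$ by an exhaustion argument (which works precisely because the family is totally ordered), and pass to the limit in the distributional identity by dominated convergence, exactly as you do in your final step. Zorn then yields a maximal element of $\KK$. No subsolution/supersolution iteration, no two-parameter scheme, no need to produce a solution squeezed between $w_1\vee w_2$ and $\overline v$.

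What your approach buys, if carried out, is a genuinely stronger conclusion: you would obtain the \emph{greatest} element of $\mathcal F$ (the pointwise supremum is itself a solution), whereas Zorn's lemma only produces a \emph{maximal} element in the order-theoretic sense, with no a priori comparability to other solutions. The price is your ``principal obstacle'': constructing, for arbitrary $w_1,w_2\in\mathcal F$, a solution in $[w_1\vee w_2,\overline v]$ via monotone iteration with infinite boundary blow-up. Your two-parameter remedy (finite boundary data $j$, linearisation constant $\lambda$, Kato comparisons, then $j\to\infty$) is plausible but would need a careful write-up --- in particular, one must check that the finite-$j$ iterates actually stay below $\overline v$ and above the subsolution $W_0$ simultaneously, and that the double limit behaves. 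As written, this step is a sketch rather than a proof. If your goal is only the statement as given, the Zorn route is far cleaner.
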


\begin{proof}
    The key ingredient in the proof is Zorn's lemma. Let $\overline v \in \LLL$ be a locally bounded pointwise and distributional supersolution of \eqref{eq:large problem in thm} and 
    $$\KK=\{v\in \LLL:v\le \overline v\text{ and $v$ solves \eqref{eq:large problem in thm} distributionally}\}.$$
    Note that by the bootstrap argument every element of $\KK$ is also a pointwise solution to \eqref{eq:large problem in thm}. Also, $\KK$ is nonempty since $u\in \KK$. Indeed, we can repeat the proof of Lemma \ref{l:large finite} to get that $u_j\le \overline v$, hence $u\le \overline v$.
    
    Let $\{v_i\}_{i\in I}$ be a totally ordered subset of $\KK$. We will show that this set has a supremum. Define $v\coloneqq \sup_{i\in I} v_i$ and note that $v\le \overline v$. Hence, $v$ is locally bounded and in $\LLL$. Further, note that we can choose an increasing sequence $(v_n)_n$ in $\{v_i\}_{i\in I}$ such that $v=\lim_n v_n$ in $D$ by the exhaustion of $D$ argument.

    Each $v_n$ solves \eqref{eq:large problem in thm} distributionally so 
     \begin{align*}
		\int_D v_n(x) \Lo\xi(x)dx=\int_D f(v_n(x)) \xi(x)dx,\quad \xi\in C_c^\infty(D),\,n\in \N.
	\end{align*}
    By letting $n\to\infty$ in the previous inequality, since we may use the dominated convergence theorem, we get that
    \begin{align*}
		\int_D v(x) \Lo\xi(x)dx=\int_D f(v(x)) \xi(x)dx,\quad \xi\in C_c^\infty(D),
	\end{align*}
    i.e. $v$ is a distributional solution to \eqref{eq:large problem in thm}, and since it also bounded by $\overline v$, by bootstrapping Theorem \ref{t:regularity SKBM}, we have that $v$ is also a pointwise solution. Hence, by Zorn's lemma, there exists the maximal element in $\KK$.
\end{proof}

\subsection*{Acknowledgement}
This research was partly supported by the Croatian Science Foundation under the project IP-2022-10-2277.

The first-named author also acknowledges financial support under the National Recovery and Resilience Plan (NRRP), Mission 4, Component 2, Investment 1.1, Call for tender No. 104 published on 2.2.2022 by the Italian Ministry of University and Research (MUR), funded by the European Union – NextGenerationEU– Project Title “Non–Markovian Dynamics and Non-local Equations” – 202277N5H9 - CUP: D53D23005670006 - Grant Assignment Decree No. 973 adopted on June 30, 2023, by the Italian Ministry of University and Research (MUR).

	\bibliographystyle{abbrv}
	\bibliography{bibliography_large-spectral}

\begin{thebibliography}{10}

\bibitem{Aba15a}
N.~Abatangelo.
\newblock Large {$s$}-harmonic functions and boundary blow-up solutions for the
  fractional {L}aplacian.
\newblock {\em Discrete Contin. Dyn. Syst.}, \textbf{35}(12):5555--5607, 2015.

\bibitem{Aba17}
N.~Abatangelo.
\newblock Very large solutions for the fractional {L}aplacian: towards a
  fractional {K}eller-{O}sserman condition.
\newblock {\em Advances in Nonlinear Analysis}, \textbf{6}(4):383--405, 2017.

\bibitem{AbaDupaNonhomo2017}
N.~Abatangelo and L.~Dupaigne.
\newblock Nonhomogeneous boundary conditions for the spectral fractional
  {L}aplacian.
\newblock {\em Annales de l'Institut Henri Poincar\'{e} C. Analyse Non
  Lin\'{e}aire}, \textbf{34}(2):439--467, 2017.

\bibitem{BaeKassmann}
J.~Bae and M.~Kassmann.
\newblock Schauder estimates in generalized {H}\"older spaces.
\newblock {\em ArXiv:1505.05498}, 2015.

\bibitem{BC17}
M.~Ben~Chrouda.
\newblock Existence and nonexistence of positive solutions to the fractional
  equation {$\Delta^{\frac\alpha2}u=-u^\gamma$} in bounded domains.
\newblock {\em Annales Academi\ae Scientiarum Fennic\ae . Mathematica},
  \textbf{42}(2):997--1007, 2017.

\bibitem{BCBF16}
M.~Ben~Chrouda and M.~Ben~Fredj.
\newblock Blow up boundary solutions of some semilinear fractional equations in
  the unit ball.
\newblock {\em Nonlinear Analysis. Theory, Methods \& Applications},
  \textbf{140}:236--253, 2016.

\bibitem{BCBF18}
M.~Ben~Chrouda and M.~Ben~Fredj.
\newblock Nonnegative entire bounded solutions to some semilinear equations
  involving the fractional {L}aplacian.
\newblock {\em Potential Analysis}, \textbf{48}(4):495--513, 2018.

\bibitem{Ber}
J.~Bertoin.
\newblock {\em L{\'e}vy processes}.
\newblock Cambridge university press, 1996.

\bibitem{Bio22}
I.~Bio\v{c}i\'{c}.
\newblock Representation of harmonic functions with respect to subordinate
  {B}rownian motion.
\newblock {\em Journal of Mathematical Analysis and Applications},
  \textbf{506}(1):Paper No. 125554, 2022.

\bibitem{Bio23}
I.~Bio\v{c}i\'{c}.
\newblock Semilinear {D}irichlet problem for subordinate spectral {L}aplacian.
\newblock {\em Communications on Pure and Applied Analysis},
  \textbf{22}(3):851--898, 2023.

\bibitem{semilinear_bvw}
I.~Bio\v{c}i\'{c}, Z.~Vondra\v{c}ek, and V.~Wagner.
\newblock Semilinear equations for non-local operators: {B}eyond the fractional
  {L}aplacian.
\newblock {\em Nonlinear Analysis. Theory, Methods \& Applications},
  \textbf{207}:112303, 2021.

\bibitem{BJ20}
A.~Biswas and S.~Jarohs.
\newblock On overdetermined problems for a general class of nonlocal operators.
\newblock {\em Journal of Differential Equations}, \textbf{268}(5):2368--2393,
  2020.

\bibitem{BL-na21}
A.~Biswas and J.~L\H{o}rinczi.
\newblock Hopf's lemma for viscosity solutions to a class of non-local
  equations with applications.
\newblock {\em Nonlinear Analysis. Theory, Methods \& Applications},
  \textbf{204}:Paper No. 112194, 18, 2021.

\bibitem{BMS23}
A.~Biswas, M.~Modasiya, and A.~Sen.
\newblock Boundary regularity of mixed local-nonlocal operators and its
  application.
\newblock {\em Annali di Matematica Pura ed Applicata},
  \textbf{202}(2):679--710, 2023.

\bibitem{bliedtner}
J.~Bliedtner and W.~Hansen.
\newblock {\em Potential theory}.
\newblock Springer-Verlag, 1986.

\bibitem{bogdan_extension}
K.~Bogdan, T.~Grzywny, K.~Pietruska-Pa{\l}uba, and A.~Rutkowski.
\newblock Extension and trace for nonlocal operators.
\newblock {\em Journal de Math{\'e}matiques Pures et Appliqu{\'e}es},
  \textbf{137}:33--69, 2020.

\bibitem{bogdan_density_and_tails_unimodal}
K.~Bogdan, T.~Grzywny, and M.~Ryznar.
\newblock Density and tails of unimodal convolution semigroups.
\newblock {\em Journal of Functional Analysis}, \textbf{266}(6):3543--3571,
  2014.

\bibitem{bogdan_et_al_19}
K.~Bogdan, S.~Jarohs, and E.~Kania.
\newblock Semilinear {Dirichlet} problem for the fractional {Laplacian}.
\newblock {\em Nonlinear Analysis. Theory, Methods \& Applications},
  \textbf{193}:111512, 2020.

\bibitem{brezis}
H.~Brezis.
\newblock {\em Functional analysis, {S}obolev spaces and partial differential
  equations}.
\newblock Springer, 2011.

\bibitem{CGCV}
H.~Chan, D.~G\'omez-Castro, and J.~L. V\'azquez.
\newblock Blow-up phenomena in nonlocal eigenvalue problems: when theories of
  {$L^1$} and {$L ^2$} meet.
\newblock {\em Journal of Functional Analysis}, \textbf{280}(7):Paper No.
  108845, 68, 2021.

\bibitem{CFQ}
H.~Chen, P.~Felmer, and A.~Quaas.
\newblock Large solutions to elliptic equations involving fractional
  {L}aplacian.
\newblock {\em Annales de l'Institut Henri Poincar\'{e} C. Analyse Non
  Lin\'{e}aire}, \textbf{32}(6):1199--1228, 2015.

\bibitem{CKKW22}
Z.-Q. Chen, P.~Kim, T.~Kumagai, and J.~Wang.
\newblock Heat kernels for reflected diffusions with jumps on inner uniform
  domains.
\newblock {\em Transactions of the American Mathematical Society},
  \textbf{375}(10):6797--6841, 2022.

\bibitem{CKSV22}
S.~Cho, P.~Kim, R.~Song, and Z.~Vondraček.
\newblock Heat kernel estimates for subordinate {M}arkov processes and their
  applications.
\newblock {\em Journal of Differential Equations}, \textbf{316}:28--93, 2022.

\bibitem{semilinear_cvw}
I.~Chowdhury, Z.~Vondra\v{c}ek, and V.~Wagner.
\newblock Large solutions to semilinear equations for subordinate {L}aplacians
  in ${C}^{1,1}$ bounded open sets.
\newblock {\em arXiv:2506.13462}, 23 pp, 2025.

\bibitem{chung_zhao}
K.~L. Chung and Z.~X. Zhao.
\newblock {\em {F}rom {B}rownian motion to {S}chr\"{o}dinger's equation}.
\newblock Springer-Verlag, 2001.

\bibitem{davies_HeatKernel}
E.~B. Davies.
\newblock {\em Heat kernels and spectral theory}.
\newblock Cambridge University Press, 1989.

\bibitem{DZ}
M.~C. Delfour and J.~P. Zolésio.
\newblock {\em Shapes and Geometries}.
\newblock Society for Industrial and Applied Mathematics, 2011.

\bibitem{dhifli2012}
A.~Dhifli, H.~M\^{a}agli, and M.~Zribi.
\newblock On the subordinate killed {B}.{M} in bounded domains and existence
  results for nonlinear fractional {D}irichlet problems.
\newblock {\em Mathematische Annalen}, \textbf{352}(2):259--291, 2012.

\bibitem{evans_pde}
L.~C. Evans.
\newblock {\em Partial differential equations}.
\newblock American Mathematical Society, 2010.

\bibitem{FQ12}
P.~Felmer and A.~Quaas.
\newblock Boundary blow up solutions for fractional elliptic equations.
\newblock {\em Asymptotic Analysis}, \textbf{78}(3):123--144, 2012.

\bibitem{FQT12}
P.~Felmer, A.~Quaas, and J.~Tan.
\newblock Positive solutions of the nonlinear {S}chr\"odinger equation with the
  fractional {L}aplacian.
\newblock {\em Proceedings of the Royal Society of Edinburgh. Section A.
  Mathematics}, \textbf{142}(6):1237--1262, 2012.

\bibitem{RosOtonFernRegularBook}
X.~Fern\'andez-Real and X.~Ros-Oton.
\newblock {\em Regularity Theory for Elliptic PDE}.
\newblock European Mathematical Society (EMS), 2022.

\bibitem{FJ23}
P.~A. Feulefack and S.~Jarohs.
\newblock Nonlocal operators of small order.
\newblock {\em Annali di Matematica Pura ed Applicata},
  \textbf{202}(4):1501--1529, 2023.

\bibitem{gilbarg_pde}
D.~Gilbarg and N.~S. Trudinger.
\newblock {\em Elliptic partial differential equations of second order}.
\newblock Springer-Verlag, second edition, 1983.

\bibitem{grubb2016}
G.~Grubb.
\newblock Regularity of spectral fractional {D}irichlet and {N}eumann problems.
\newblock {\em Mathematische Nachrichten}, \textbf{289}(7):831--844, 2016.

\bibitem{remarks_on_nonlocal}
T.~Grzywny, M.~Kassmann, and {\L}.~Le{\.z}aj.
\newblock Remarks on the nonlocal {Dirichlet} problem.
\newblock {\em Potential Analysis}, \textbf{54}(1):119--151, 2021.

\bibitem{GKK20}
T.~Grzywny, K.-Y. Kim, and P.~Kim.
\newblock Estimates of {D}irichlet heat kernel for symmetric {M}arkov
  processes.
\newblock {\em Stochastic Processes and their Applications},
  \textbf{130}(1):431--470, 2020.

\bibitem{grzywny_potential_kernels}
T.~Grzywny and M.~Kwa{\'s}nicki.
\newblock Potential kernels, probabilities of hitting a ball, harmonic
  functions and the boundary {Harnack} inequality for unimodal {L{\'e}vy}
  processes.
\newblock {\em Stochastic Processes and their Applications},
  \textbf{128}(1):1--38, 2018.

\bibitem{HuynhNguyen2022_new}
P.-T. Huynh and P.-T. Nguyen.
\newblock Compactness of green operators with applications to semilinear
  nonlocal elliptic equations.
\newblock {\em Journal of Differential Equations}, \textbf{418}:97--141, 2025.

\bibitem{KKLL}
M.~Kim, P.~Kim, J.~Lee, and K.-A. Lee.
\newblock Boundary regularity for nonlocal operators with kernels of variable
  orders.
\newblock {\em Journal of Functional Analysis}, \textbf{277}(1):279--332, 2019.

\bibitem{KimLee}
M.~Kim and K.-A. Lee.
\newblock Generalized {E}vans–{K}rylov and {S}chauder type estimates for
  nonlocal fully nonlinear equations with rough kernels of variable orders.
\newblock {\em Journal of Differential Equations}, \textbf{270}:883--915, 2021.

\bibitem{mimica}
P.~Kim and A.~Mimica.
\newblock Harnack inequalities for subordinate {Brownian} motions.
\newblock {\em Electronic Journal of Probability}, \textbf{17}, 2012.

\bibitem{KSV_2012_SCM}
P.~Kim, R.~Song, and Z.~Vondra{\v{c}}ek.
\newblock Uniform boundary {Harnack} principle for rotationally symmetric
  {L{\'e}vy} processes in general open sets.
\newblock {\em Science China Mathematics}, \textbf{55}(11):2317--2333, 2012.

\bibitem{KSV_global_UBHP}
P.~Kim, R.~Song, and Z.~Vondra{\v{c}}ek.
\newblock Global uniform boundary {Harnack} principle with explicit decay rate
  and its application.
\newblock {\em Stochastic processes and their applications},
  \textbf{124}(1):235--267, 2014.

\bibitem{vondra_heat}
P.~Kim, R.~Song, and Z.~Vondra{\v{c}}ek.
\newblock Heat kernels of non-symmetric jump processes: beyond the stable case.
\newblock {\em Potential Analysis}, \textbf{49}(1):37--90, 2018.

\bibitem{ksv_twosided}
P.~Kim, R.~Song, and Z.~Vondra\v{c}ek.
\newblock Two-sided {G}reen function estimates for killed subordinate
  {B}rownian motions.
\newblock {\em Proceedings of the London Mathematical Society},
  \textbf{104}(5):927--958, 2012.

\bibitem{KSV_bhpinf}
P.~Kim, R.~Song, and Z.~Vondra\v{c}ek.
\newblock Boundary {H}arnack principle and {M}artin boundary at infinity for
  subordinate {B}rownian motions.
\newblock {\em Potential Analysis}, \textbf{41}(2):407--441, 2014.

\bibitem{ksv_minimal2016}
P.~Kim, R.~Song, and Z.~Vondra\v{c}ek.
\newblock Minimal thinness with respect to subordinate killed {B}rownian
  motions.
\newblock {\em Stochastic Processes and their Applications},
  \textbf{126}(4):1226--1263, 2016.

\bibitem{ksv_potential_SKBM_2020}
P.~Kim, R.~Song, and Z.~Vondra\v{c}ek.
\newblock Potential theory of subordinate killed {B}rownian motion.
\newblock {\em Transactions of the American Mathematical Society},
  \textbf{371}(6):3917--3969, 2019.

\bibitem{ksv_BdryKillLevy2020}
P.~Kim, R.~Song, and Z.~Vondra\v{c}ek.
\newblock On the boundary theory of subordinate killed {L}\'{e}vy processes.
\newblock {\em Potential Analysis}, \textbf{53}(1):131--181, 2020.

\bibitem{KSV-jde23}
P.~Kim, R.~Song, and Z.~Vondraček.
\newblock Harnack inequality and interior regularity for {M}arkov processes
  with degenerate jump kernels.
\newblock {\em Journal of Differential Equations}, \textbf{357}:138--180, 2023.

\bibitem{klimsiak2023dirichlet}
T.~Klimsiak and A.~Rozkosz.
\newblock Dirichlet problem for semilinear partial integro-differential
  equations: the method of orthogonal projection.
\newblock {\em Calc. Var. Partial Differential Equations}, \textbf{64}(1):Paper
  No. 2, 39, 2025.

\bibitem{KMR_suprema}
M.~Kwa\'{s}nicki, J.~Ma\l~ecki, and M.~Ryznar.
\newblock Suprema of {L}\'{e}vy processes.
\newblock {\em The Annals of Probability}, \textbf{41}(3B):2047--2065, 2013.

\bibitem{morters_peres_BM}
P.~M\"{o}rters and Y.~Peres.
\newblock {\em Brownian motion}.
\newblock Cambridge University Press, 2010.

\bibitem{Ros-OtonSerra2016}
X.~Ros-Oton and J.~Serra.
\newblock Regularity theory for general stable operators.
\newblock {\em J. Differential Equations}, \textbf{260}(12):8675--8715, 2016.

\bibitem{bernstein}
R.~L. Schilling, R.~Song, and Z.~Vondra\v{c}ek.
\newblock {\em Bernstein functions: theory and applications}.
\newblock De Gruyter, 2012.

\bibitem{SV2014}
R.~Servadei and E.~Valdinoci.
\newblock On the spectrum of two different fractional operators.
\newblock {\em Proceedings of the Royal Society of Edinburgh. Section A.
  Mathematics}, \textbf{144}(4):831--855, 2014.

\bibitem{Silvestre2007regularity}
L.~Silvestre.
\newblock Regularity of the obstacle problem for a fractional power of the
  {L}aplace operator.
\newblock {\em Communications on Pure and Applied Mathematics},
  \textbf{60}(1):67--112, 2007.

\bibitem{song_sharp_bounds_2004}
R.~Song.
\newblock Sharp bounds on the density, {G}reen function and jumping function of
  subordinate killed {BM}.
\newblock {\em Probability Theory and Related Fields},
  \textbf{128}(4):606--628, 2004.

\bibitem{song_vondra_JTP2006}
R.~Song and Z.~Vondra\v{c}ek.
\newblock Potential theory of special subordinators and subordinate killed
  stable processes.
\newblock {\em Journal of Theoretical Probability}, \textbf{19}(4):817--847,
  2006.

\bibitem{zhang_heat_kernel}
Q.~S. Zhang.
\newblock The boundary behavior of heat kernels of {D}irichlet {L}aplacians.
\newblock {\em Journal of Differential Equations}, \textbf{182}(2):416--430,
  2002.

\end{thebibliography}

	\bigskip
	
	\noindent{\bf Ivan Bio\v{c}i\'c}
	
	\noindent Department of Mathematics, Faculty of Science, University of Zagreb, Zagreb, Croatia,
	
	\noindent Email: \texttt{ibiocic@math.hr}
    
        \noindent Department of Mathematics ``Giuseppe Peano", University of Turin, Turin, Italy,
	
	\noindent Email: \texttt{ivan.biocic@unito.it}
	
	\bigskip
	
	\noindent{\bf Vanja Wagner}
	
	\noindent Department of Mathematics, Faculty of Science, University of Zagreb, Zagreb, Croatia,
	
	\noindent Email: \texttt{wagner@math.hr}
\end{document}